\newtheorem{teo}{Theorem}[section]
\newtheorem{prop}[teo]{Proposition}
\newtheorem{lem}[teo]{Lemma}
\newtheorem{cor}[teo]{Corollary}
\newtheorem{conj}[teo]{Conjecture}
\newtheorem{defini}[teo]{Definition}
\newtheorem{rem}[teo]{Remark}
\newcommand{\Spec}{\mbox{Spec}}
\newcommand{\GL}{{\rm GL}}
\newcommand{\Sh}{{\rm Sh}}
\newcommand{\Supp}{{\rm Supp}}
\newcommand{\Gal}{{\rm Gal}}
\newcommand{\Res}{{\rm Res}}
\newcommand{\MT}{{\rm MT}}
\newcommand{\der}{{\rm der}}
\newcommand{\ab}{{\rm ab}}
\newcommand{\ad}{{\rm ad}}
\newcommand{\TT}{{\mathbb T}}
\newcommand{\FF}{{\mathbb F}}
\newcommand{\CC}{{\mathbb C}}
\newcommand{\RR}{{\mathbb R}}
\newcommand{\ZZ}{{\mathbb Z}}
\newcommand{\QQ}{{\mathbb Q}}
\newcommand{\NN}{{\mathbb N}}
\newcommand{\GG}{{\mathbb G}}
\newcommand{\SSS}{{\mathbb S}}
\newcommand{\AAA}{{\mathbb A}}
\newcommand{\lto}{\longrightarrow}
\newcommand{\cL}{{\cal L}}
\newcommand{\calL}{{\cal L}}
\newcommand{\cX}{{\cal X}}
\newcommand{\cB}{{\cal B}}
\newcommand{\cR}{{\cal R}}
\newcommand{\ol}{\overline}
\newcommand{\disc}{{\rm disc}}
\newcommand{\oQ}{\overline{\QQ}}
\newcommand{\wt}{\widetilde}
\newenvironment{prf}[1]{\trivlist
\item[\hskip \labelsep{\it
#1.\hspace*{.3em}}]}{~\hspace{\fill}~$\square$\endtrivlist}
\newenvironment{proof}{\begin{prf}{\bf Proof}}{\end{prf}}
\title{Galois orbits and equidistribution of special subvarieties :
towards the Andr\'e-Oort conjecture.
\footnote{{\bf Ullmo} : Universit\'e de Paris-Sud, Bat 425 and IUF, 91405, Orsay
Cedex France, e-mail : ullmo@math.u-psud.fr ; {\bf Yafaev} : University
College London, Department of Mathematics, 25 Gordon street, WC1H
OAH, London, United Kingdom, e-mail : yafaev@math.ucl.ac.uk}
\footnote{Submitted to Annals of Mathematics. Version of September 2013.}}
\author{Emmanuel Ullmo \and Andrei Yafaev}
\date{}
\begin{document}

\maketitle

\begin{abstract}
In this paper we develop a strategy and some technical tools 
for proving the Andr\'e-Oort conjecture.
We give lower bounds for the 
degrees of Galois 
orbits of geometric components
of special subvarieties of Shimura varieties, assuming the Generalised Riemann Hypothesis.
We proceed to show that sequences of  special subvarieties 
whose Galois orbits have bounded degrees
are equidistributed in a suitable sense.
\end{abstract}

\tableofcontents
\bigskip

\bigskip

\section{Introduction.}\label{section1}

The main motivation for this paper is the Andr\'e-Oort conjecture
stated below.

\begin{conj} [Andr\'e-Oort] \label{AO}
Let $S$ be a Shimura variety and let $\Sigma$ be a set of special points
in $S$. Every irreducible component of the Zariski closure of $\Sigma$ is a
special subvariety of $S$.
\end{conj}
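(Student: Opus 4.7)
The plan is to proceed by a double induction: first on $\dim S$, and then, for fixed $S$, on the dimension of an irreducible component $V$ of the Zariski closure $\overline{\Sigma}^{\mathrm{Zar}}$. It suffices to show that any such $V$ containing a Zariski dense subset of special points of $S$ is itself a special subvariety.

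Let $T$ be the smallest special subvariety of $S$ containing $V$. If $T\subsetneq S$, replace $S$ by $T$ and conclude by the outer induction; thus we may assume $V$ is \emph{Hodge generic} in $S$, i.e.\ not contained in any proper special subvariety. Fix a number field $E$ of definition of $V$. For each special point $x\in V$, write $\MT(x)$ for its Mumford--Tate torus and $B(x)$ for an arithmetic measure of its complexity (e.g.\ the product of the conductor of $\MT(x)$ and the absolute discriminant of its splitting field). The first key step is to invoke the Galois lower bound developed in the body of this paper: under GRH there exist constants $c,\delta>0$, depending only on $S$ and $E$, such that
\[
\#\bigl(\Gal(\oQ/E)\cdot x\bigr)\;\geq\; c\, B(x)^{\delta}.
\]
If the $B(x)$, $x\in\Sigma\cap V$, are bounded, then $\Sigma\cap V$ meets only finitely many $\Gal(\oQ/E)$-orbits and is therefore finite, forcing $\dim V=0$ and making $V$ a special point. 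Otherwise we extract a sequence $(x_n)\subset\Sigma\cap V$ with $B(x_n)\to\infty$, hence $\#(\Gal(\oQ/E)\cdot x_n)\to\infty$.

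The second key step is the equidistribution theorem announced in the abstract. Let $V=V_1,\ldots,V_r$ be the $\Gal(\oQ/\QQ)$-conjugates of $V$, so that the full Galois orbit of each $x_n$ is contained in $V_1\cup\cdots\cup V_r$. Equidistribution of Galois orbits of special points of growing complexity forces the normalized counting measures on these orbits to converge weakly to the canonical probability measure on the smallest special subvariety containing all but finitely many of the $x_n$; Hodge genericity of $V$ identifies this subvariety with the connected component of $S$ on which the $V_i$ live. Since $V_1\cup\cdots\cup V_r$ is Zariski closed and carries these measures, it must equal that whole component. Irreducibility of $V$ together with transitivity of the Galois action on the $V_i$ then force $V$ itself to be a connected component of $S$, and hence special, completing the induction.

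The hardest part is the equidistribution input. The scheme above, applied naively, handles only special \emph{points}; but the induction on $\dim V$ inevitably produces intermediate special \emph{subvarieties} of growing complexity inside $V$, and so both the Galois lower bound and the equidistribution statement must be established in that broader setting, which is precisely the technical content of the rest of the paper. Two further difficulties must be dealt with: removing the dependence on GRH in the Galois lower bound, and controlling the fields of definition of the components $V_i$ so that the Galois conjugates of $V$ genuinely remain in $\overline{\Sigma}^{\mathrm{Zar}}$ and can be used to drive the equidistribution argument to a contradiction.
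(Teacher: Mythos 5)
This statement is a \emph{Conjecture} in the paper, not a theorem: the paper supplies ingredients (the lower bounds of Theorem \ref{teo4.7}, the finiteness result Theorem \ref{T-fini}, and the equidistribution Theorem \ref{T-equi}) and explicitly defers the full proof (under GRH) to the forthcoming work of Klingler and Yafaev. So there is no internal proof to compare against, only the announced strategy in the introduction. Measured against that strategy, your sketch inverts the key dichotomy, and this inversion is a genuine gap.

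The paper's alternative is: given a sequence $(Z_n)$ of special subvarieties, either $\deg(\Gal(\oQ/E)\cdot Z_n)\to\infty$, in which case one uses Galois-theoretic and \emph{geometric} techniques (images of subvarieties under Hecke correspondences, irreducibility results, \`a la Edixhoven--Yafaev / Klingler--Yafaev); or the degrees stay bounded, in which case Theorem \ref{T-fini} shows the $Z_n$ are $T_i$-special for finitely many tori $T_i$, and the \emph{ergodic} Theorem \ref{T-equi} shows they equidistribute to a special subvariety $Z$ that eventually contains them. In your sketch you extract a sequence $(x_n)$ whose Galois orbits \emph{grow}, and then invoke ``equidistribution of Galois orbits of special points of growing complexity.'' No such theorem is proved or used in this paper, and it is not known in the generality of an arbitrary Shimura variety; Theorem \ref{T-equi} concerns $T$-special subvarieties for a \emph{fixed} torus $T$ (hence fixed splitting field, hence bounded complexity), which is precisely the complementary case. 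So the step ``equidistribution forces the counting measures on $\Gal(\oQ/E)\cdot x_n$ to converge to the uniform measure on the ambient component'' has no support either in the literature cited or in the body of the paper, and it is exactly where the hard geometric input (Hecke-correspondence arguments) is needed. Your concluding remarks correctly flag the need for Galois bounds and equidistribution for intermediate special subvarieties, for control of fields of definition, and for relaxing GRH, but those do not cure the mis-application of the ergodic input in the growing-degree branch.

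Two smaller points: (i) the lower bounds actually proved here (Theorem \ref{teo4.7}) are of the shape $c_N\prod_p\max(1,B|K^m_{T,p}/K_{T,p}|)\cdot(\log|\disc(L_T)|)^N$, i.e.\ polynomial in $\log|\disc|$ rather than a power of the discriminant as your displayed inequality $\#(\Gal(\oQ/E)\cdot x)\ge cB(x)^\delta$ suggests; what matters for the argument is only unboundedness, but the stated form is misleading. (ii) The ``bounded complexity $\Rightarrow$ finitely many points'' step goes through $\TT_M$ being finite up to conjugacy and the finiteness of $T$-special points for fixed $T$ (Corollary \ref{cor3.12}), not directly from bounding $B(x)$; it is worth routing the argument through Theorem \ref{T-fini} explicitly.
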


Some authors use the terminology `subvarieties of Hodge type' 
instead of `special subvarieties'. The two terms refer to 
 the same notion.
There are two main approaches to this conjecture which proved
fruitful in some cases. One, due to Edixhoven and Yafaev (see \cite{EdYa} and
\cite{Ya}), relies on the Galois properties of special points and geometric
 properties of images of subvarieties of Shimura varieties by Hecke correspondences.
The other, due to Clozel and Ullmo (see \cite{CU1}), 
aims at proving that certain sequences of special subvarieties 
are equidistributed in a certain sense.
This approach uses some deep
theorems from ergodic theory.
The purpose of this paper is to explain how to
combine these two approaches in order to obtain a strategy for 
proving the Andr\'e-Oort
conjecture in full generality and to provide essential ingredients to apply this strategy.
The strategy and the results of this paper are subsequently used in \cite{KY}
by Klingler and Yafaev to prove the Andr\'e-Oort conjecture
assuming the Generalised Riemann Hypothesis (GRH).

To explain the alternative,
we need to introduce some terminology.
Let $S$ be a connected component of a Shimura variety.
There is a Shimura datum $(G,X)$ and a compact open subgroup $K$
of $G(\AAA_{f})$ such that $S$ is a connected component of
$$
\Sh_{K}(G,X) := G(\QQ)\backslash X \times G(\AAA_{f})/K.
$$
For the purpose of proving the conjecture \ref{AO}, we may and do assume that $S$ is the image
of $X^+ \times \{1\}$ in $\Sh_K(G,X)$ (where $X^+$ is a fixed
connected component
of $X$).
A special subvariety $Z$ of $S$ is associated  to a Shimura
subdatum $(H,X_H)$ of  $(G,X)$. More precisely, $Z$ is an irreducible
component of the image of $\Sh_{K\cap H(\AAA_{f})}(H,X_{H})$ in
$\Sh_K(G,X)$ contained in $S$. We are assuming that $H$
is the generic Mumford-Tate group on $X_H$. 

Let $E$ be some number field over which $S$ admits a canonical
model.
Let $Z$ be a special subvariety of $S$ associated to $(H,X_H)$ as above.

By the degree of the Galois orbit of $Z$, denoted
$\deg(\Gal(\oQ/E)\cdot Z)$, we mean the degree 
of the subvariety $\Gal(\oQ/E)\cdot Z$ calculated with respect to the natural ample line bundle on  the Baily-Borel
compactification of $\Sh_{K}(G,X)$.
If $Z$ is a special point, then $\deg(\Gal(\oQ/E)\cdot Z)$ is simply 
the number of $\Gal(\oQ/E)$-conjugates of $Z$.

The ``philosophy'' of this paper is the following alternative. Let
$(Z_n)_{n\in \NN}$ be a sequence of  special subvarieties of $S$.
After possibly replacing $(Z_n)$ by a subsequence and
assuming the GRH for CM-fields, at least one of the following
cases occurs.
\begin{enumerate}
\item The sequence $\deg(\Gal(\oQ/E)\cdot Z_n)$ tends to infinity as $n\rightarrow \infty$
(and therefore Galois-theoretic and geometric techniques can be used).

\item The sequence of probability measures $(\mu_{n})$ canonically
attached to $(Z_n)$ weakly converges to some $\mu_Z$, the
probability measure
  canonically attached to a
special subvariety $Z$ of $S$. Moreover, for every $n$ large
enough, $Z_n$ is contained in $Z$. In other words, the sequence $(Z_n)$ 
is equidistributed with respect to $(Z,\mu_Z)$.
\end{enumerate}

Which of the two cases occurs depends on the geometric nature of the
subvarieties $Z_n$. Let us explain this in more detail.

A special subvariety $Z$ associated to a Shimura datum $(H,X_H)$
as before (in particular $H$ is the generic Mumford-Tate on $X_H$) is
called strongly special (see \cite{CU1}) if the image of the group
$H$ in the adjoint group $G^\ad$ is semisimple. Note that
the condition (b) in the definition of  ``strongly special''
(\cite{CU1}, 4.1) is in fact implied by the  first (see \cite{Ul2} Rem. 3.9, or the proof of the theorem \ref{T-equi}
of this paper).
Clozel and Ullmo
proved in \cite{CU1} that if the subvarieties $Z_{n}$ are strongly special then
the second case of the alternative occurs. This result is unconditional.

On the other extreme, if $H$ is a torus, then $Z$ is a special
point. If $(Z_{n})$ is a sequence of special points, then the first
case of the alternative occurs (and the second in general does not: a sequence
of special
points is usually not equidistributed). This uses the GRH but we
believe that one
might be able to get rid of this assumption.
We also prove the equidistribution result unconditionally in the case
where the
subvarieties $Z_n$ satisfy an additional assumption.
In the paper \cite{Ya}, lower
bounds for Galois orbits of special points are given and used to
prove the Andr\'e-Oort conjecture for curves. However, these
bounds are not strong enough to prove that they are unbounded for
a general infinite sequence of special points.

The first thing we do in this paper is to give lower bounds for the degree of Galois
orbits of  special subvarieties which are not \emph{strongly special}
 (Theorem \ref{teo4.7}).
In the special case where $H$ is a torus, we can show that given an
infinite set $\Sigma$ of special points, the size of the  Galois orbit of the
point $x$ is unbounded as $x$ ranges through $\Sigma$. This result is explained
in the corollary \ref{cor3.12}.
Lower bounds obtained in \cite{Ya} do not allow to prove such a statement.

We now explain our lower bounds in detail.
Let $N$ be an
integer. Let $H$ be the generic Mumford-Tate group on $X_{H}$ and
let $T$ be its connected centre. Suppose that $T$ is a non-trivial torus.
Let $L_T$ be the splitting field of $T$. Let $K^m_{T}$ be the maximal
compact open subgroup of $T(\AAA_{f})$. Note that $K^m_T$ is a product
of maximal compact open subgroups $K^m_{T,p}$ of $T(\QQ_p)$ for all primes $p$.
Let $K_T$ be the compact
open subgroup
$T(\AAA_f)\cap K$ of $T(\AAA_f)$.
We assume that $K$ is a product of compact
open subgroups $K_p$ of $G(\QQ_{p})$ in which case 
$K_T$ is also a product of compact open subgroups $K_{T,p}$ of $T(\QQ_{p})$. 
Let $Z$ be a component of the image of $\Sh_{H(\AAA_f)\cap K}(H,X_H)$
in $S$.

We show (thm. \ref{teo4.7}), assuming the GRH, that
 there is a positive constant $B$ (independent of $Z$ and $N$) 
$$
\deg(\Gal(\oQ/E)\cdot Z) \gg
\prod_{\{p: K^m_{T,p}\not= K_{T,p}\}} \max(1, B |K^m_{T,p}/K_{T,p}|) \log(|{\rm disc}(L_T)|)^{N}.
$$

We also obtain similar lower bounds for the degree of the Galois orbit of a Hodge generic irreducible
 subvariety $Y$ of $Z$ defined over $\ol\QQ$ 
when $Y$ moreover satisfies a technical property (see thm. \ref{teo4.7}). This result will play no role in this paper
but will be useful in the forthcoming paper by Klingler and Yafaev \cite{KY}.

The next task we carry out is the analysis of the conditions, under which 
a given sequence of special subvarieties $Z_n$ is such that $\deg(\Gal(\oQ/E)\cdot Z_{n}) $
is bounded.
We translate  this condition into explicit conditions on the Shimura data defining 
the subvarieties $Z_n$. We introduce the notion of a $T$-special subvariety.
Suppose that $G$ is semisimple of adjoint type and fix a subtorus $T$ of $G$ such that
$T(\RR)$ is compact.
A $T$-Shimura subdatum $(H, X_H)$ of $(G,X)$ is a Shimura subdatum such that 
 $T=Z(H)^0$ is the connected centre
of $H$. A $T$-special subvariety is a special subvariety defined by
a $T$-special Shimura subdatum. 
Fix an integer $M$. We show (thm. \ref{T-fini}), assuming the GRH,
 that there is a finite set $\{ T_1,\dots , T_r \}$ of subtori of
$G$ such that any special subvariety $Z$ 
with $\deg(\Gal(\oQ/E)\cdot Z)\leq M$
 is $T_i$-special for some $i=1,\dots, r$.
This result crucially relies on a result of Gille and Moret-Bailly \cite{GM-B}.

Finally, using the ergodic methods of \cite{CU1}, we
  prove that if the degree of $\Gal(\oQ/E)\cdot Z_{n}$
  is bounded (when $n$
varies), then the second case of the alternative occurs.
We actually show (thm. \ref{T-equi}) that, for a fixed $T$, a sequence of $T$-special subvarieties 
is equidistributed in the sense explained above.

The alternative explained above is used in the forthcoming paper
by Klingler and the second author \cite{KY} to prove the following
theorem which is the most general result on the Andr\'e-Oort
conjecture obtained so far.

\begin{teo}
Let $(G,X)$ be a Shimura datum and $K$ a compact open subgroup of
$G(\AAA_{f})$. Let $\Sigma$ be a set of special points in $\Sh_{K}(G,X)$.
We make one of the two following assumptions:
\begin{enumerate}
\item Assume the Generalised Riemann Hypothesis (GRH) for CM fields.
\item Assume that there exists a faithful representation
$G\hookrightarrow \GL_{n}$ such that with respect to this
representation, the Mumford-Tate groups $\MT(s)$ lie
in one $\GL_{n}(\QQ)$-conjugacy class as $s$ ranges through $\Sigma$.
\end{enumerate}
Then every irreducible component of the Zariski closure of $\Sigma$ in
$\Sh_{K}(G,X)$ is a special subvariety.
\end{teo}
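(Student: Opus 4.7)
The plan is to proceed by induction on $\dim V$, where $V$ is a fixed irreducible component of the Zariski closure of $\Sigma$ in $\Sh_K(G,X)$. After enlarging $E$ so that each geometric component of $\Sh_K(G,X)$ is defined over $E$ and replacing $\Sigma$ by its $\Gal(\oQ/E)$-orbit (still a set of special points), one may assume $V$ is Galois-stable and $\Sigma \cap V$ is Zariski-dense in $V$. The case $\dim V = 0$ is immediate since $V$ is then a special point. For $\dim V \geq 1$, let $V^{\rm sp}$ be the smallest special subvariety of $S = \Sh_K(G,X)$ containing $V$; replacing $S$ by $V^{\rm sp}$, I would assume $V$ is Hodge-generic in $S$, so that proving $V$ special reduces to proving $V = S$.

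The core of the argument is to produce a Zariski-dense family $(Z_n)_{n \in \NN}$ of positive-dimensional special subvarieties contained in $V$, and then to invoke the dichotomy developed in this paper. For the construction of $(Z_n)$: given $s \in \Sigma \cap V$, Corollary \ref{cor3.12} shows (under hypothesis (i)) that $|\Gal(\oQ/E) \cdot s|$ is unbounded as $s$ ranges over an infinite subfamily of $\Sigma$. Combining the quantitative bound of Theorem \ref{teo4.7} for a special point with the Hecke-correspondence technique of Edixhoven-Yafaev (\cite{EdYa}, \cite{Ya})---which compares $|\Gal(\oQ/E) \cdot s|$ against $|T_p(s)|$ for an appropriate prime $p$---will produce, for each such $s$ with sufficiently large Galois orbit, a positive-dimensional special subvariety $Z_s \subseteq V$ through $s$. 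Under hypothesis (ii), the $\GL_n(\QQ)$-conjugacy of the Mumford-Tate groups $\MT(s)$ supplies the same uniformity without GRH. Collecting these $Z_s$ for a Zariski-dense set of $s$ yields the desired family $(Z_n)$.

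Once $(Z_n)$ is in place, apply the alternative. If $\deg(\Gal(\oQ/E) \cdot Z_n) \to \infty$ along some subsequence, the growing number of Galois conjugates inside the Galois-stable $V$ should force $V$ itself to be special via a further application of the Hecke-theoretic argument, now in positive dimension. Otherwise, after a subsequence Theorem \ref{T-fini} provides a single torus $T$ such that every $Z_n$ is $T$-special, and Theorem \ref{T-equi} then gives weak convergence of the canonical probability measures $\mu_n$ to $\mu_{Z_\infty}$ for some special subvariety $Z_\infty$, with $Z_n \subseteq Z_\infty$ for all large $n$. Since each $Z_n \subseteq V$ and $\bigcup_n Z_n$ is Zariski-dense in $V$, we obtain $V \subseteq Z_\infty \subseteq S$; Hodge-genericity of $V$ in $S$ then forces $Z_\infty = S$, whence $V = S$ is special.

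The main obstacle is the first step: constructing $(Z_n)$ inside $V$ prior to invoking the dichotomy. The Galois-orbit lower bounds of this paper (Theorem \ref{teo4.7} and Corollary \ref{cor3.12}) are tailored precisely for this purpose; they are quantitatively strong enough, under (i), that coupled with Edixhoven-Yafaev the Hecke-translates of sufficiently Galois-large special points of $V$ must generate positive-dimensional special subvarieties that remain inside $V$. A secondary technical point is preserving the containment $Z_n \subseteq V$ throughout the subsequence extractions in the equidistribution step, so that the limit $Z_\infty$ from Theorem \ref{T-equi} is genuinely constrained to lie in $V$ and hence (by Hodge-genericity) to equal $S$.
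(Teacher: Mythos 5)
This theorem is stated in the paper but is explicitly \emph{not} proved there: the text says the proof is carried out in the forthcoming paper of Klingler and Yafaev \cite{KY}, with the present paper only supplying the three ingredients --- the Galois lower bounds (Theorem \ref{teo4.7} and its corollaries), the finiteness of tori (Theorem \ref{T-fini}), and the equidistribution of $T$-special subvarieties (Theorem \ref{T-equi}). Your outline correctly identifies these ingredients and the overall dichotomy (unbounded Galois degree versus bounded Galois degree), so as a high-level summary of the intended strategy it is on target.

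However, you have not written a proof; you have relocated the hard work into two unsupported assertions. First, the claim that coupling the Galois lower bounds with the Edixhoven--Yafaev Hecke technique ``will produce, for each such $s$ with sufficiently large Galois orbit, a positive-dimensional special subvariety $Z_s\subseteq V$'' is precisely the content of the case-(1) branch of the alternative, and it is far from automatic: it requires choosing suitable primes at which the relevant groups split, proving that the transform $T_pV$ is irreducible, and running a degree comparison --- and the irreducibility issue is explicitly flagged in the introduction of the present paper as one of the two main difficulties that are resolved only in \cite{KY}, not here. Second, your treatment of the unbounded branch after the $Z_n$ are constructed (``the growing number of Galois conjugates inside the Galois-stable $V$ should force $V$ itself to be special via a further application of the Hecke-theoretic argument, now in positive dimension'') has the same gap one dimension up. Absent those two steps, the argument does not close. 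A more minor structural remark: the cleanest organisation is not induction on $\dim V$ starting from special points, but rather to take $(Z_n)$ to be a Zariski-dense sequence of \emph{maximal} special subvarieties of $V$ and apply the alternative to that sequence; in the bounded branch the equidistribution limit $Z_\infty$ then lies in $V$ (since each $Z_n\subseteq V$ and $V$ is closed) and contains $Z_n$ for $n$ large, so maximality forces $Z_\infty=Z_n$ eventually and hence $Z_n=V$ --- this is cleaner than passing through Hodge-genericity of $V$ in a replaced ambient $S$ and hoping to conclude $Z_\infty=S$.
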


Klingler and Yafaev started working together on this conjecture in
2003 trying to generalise the Edixhoven-Yafaev strategy
to the general case of the Andr\'e-Oort conjecture. 
In the process
two main difficulties occurred. One is the question of irreducibility of
transforms of subvarieties under Hecke correspondences.
This problem is dealt with in the forthcoming paper by Klingler and
Yafaev, this allows to treat the cases where the first case of the alternative 
explained above occurs. 

The other difficulty was dealing with  sets of  special 
subvarieties which are defined over number fields of bounded degree. 
We overcome this difficulty in the present paper. In fact, we show that
this is precisely when the second case of the alternative occurs.
This strategy : a combination of
Galois theoretic and ergodic techniques was discovered
by the authors of this paper while the second author was visiting
the University of Paris-Sud in January-February 2005.
We tested our strategy on the case of subvarieties of
a product of modular curves (see \cite{UYMod}).

\section*{Acknowledgements.}

The authors are grateful to Laurent Clozel for valuable
comments and for providing them with a proof of one of the
key lemmas of this paper. The authors are grateful to Philippe Gille and
Laurent Moret-Bailly for providing the paper \cite{GM-B},  in response to our question.
We are particularly grateful to Phillipe Gille for
extremely helpful conversations. Laurent Moret-Bailly has gone through
a previous version of the paper and pointed out numerous inaccuracies.
The authors are very grateful to the referee who pointed out a serious gap
in the previous version of the paper and sent us many valuable
comments.

The second author is very grateful to the Universit\'e de Paris-Sud for
hospitality during his stay in January-February 2005 when this
work was initiated. Both authors are grateful to the Scuola Normale
Superiore di Pisa and to the Universit\'e de Montr\'eal for inviting them in
the spring and summer 2005 respectively. Parts of this work have been
completed during their stays at these places. The second author is
grateful to the EPSRC (grant GR/S28617/01) 
and to the Leverhulme Trust for financial support.

\section{Degrees of Galois orbits of special subvarieties.}

In this section we give lower bounds for the degrees 
of the Galois orbits of special  subvarieties that are not strongly special (actually we prove a more general statement
as explained in the introduction).

\subsection{Preliminaries on special subvarieties and reciprocity morphisms.}\label{section2.1}

We start by recalling some facts about special subvarieties, 
 reciprocity morphisms
and the Galois action on the geometric components
of Shimura varieties. If $Z$ is a topological space, we denote by $\pi_{0}(Z)$
 the set of connected components of $Z$.
 
Let $(G,X)$ be a Shimura datum. 
We fix a faithful
representation of $G$ which allows us to view $G$ as a closed subgroup of some $\GL_n$.
Let $K$ be a compact open subgroup of $G(\AAA_f)$ which  is
contained in $\GL_n(\widehat{\ZZ})$. We also assume that $K$ is a
product of compact open subgroups $K_p$ of $G(\QQ_p)$.

Let $(H,X_H)$ be a Shimura subdatum of $(G,X)$. 
We suppose that $H$ is not semisimple (its connected centre is non-trivial).  Let $T$ be the connected
centre of $H$, so that $T$ is a non-trivial torus and 
$H$ is an almost direct product
$TH^\der$.

Let $K_H$ be the compact open subgroup $H(\AAA_f)\cap
K$ of $H(\AAA_f)$.
We first describe the Galois action on the set of components of
$\Sh_{K_H}(H,X_H)$. We refer to the sections 2.4-2.6 of
\cite{De2} for details and proofs. 

Let $N$ be a reductive group over $\QQ$ and $\lambda \colon N \lto N^{\ad}$ the quotient of $N$ by its centre.
We denote 
$$
N(\RR)_+ := \lambda^{-1}(N^{\ad}(\RR)^{+})
$$
and $N(\QQ)_+ := N(\RR)_+ \cap N(\QQ)$.

Let $\pi_0(H,K_H)$ be the set
of geometric components of $\Sh_{K_H}(H,X_H)$.
Recall (\cite{De2} 2.1.3.1) that
 $$
 \pi_0(H,K_H)=
H(\QQ)_{+}\backslash H(\AAA_f)/K_H= H(\AAA_f)/H(\QQ)_{+}K_{H}.
$$
  Let $E_H:=E(H,X_{H})$ be the reflex field of
$(H,X_H)$ and $T_{E_H}:=\Res_{E_{H}/\QQ}\GG_{m{E_H}}$.

Following Deligne (\cite{De2} 2.0.15.1) we define for any reductive $\QQ$-group
$N$ 
$$
\pi(N):=N(\AAA)/ N(\QQ)\rho(\widetilde{N}(\AAA)).
$$
Here
$\rho\colon\widetilde{N}\longrightarrow N^\der$ denotes   the universal
covering of the derived group $N^\der$ of $N$. 
The group $\pi_0(\pi(N))$ is an abelian group (in fact $\pi(N)$ is an abelian group, see 1.6.6 of \cite{MoMo}) with a natural action of the abelian group $\pi_0(N(\RR)_+)$.
Let
$$
\overline{\pi_{0}}(\pi(N)):=\pi_{0}(\pi(N))/\pi_{0}(N(\RR)_{+}).
$$
Then by (\cite{De2} 2.1.3.2) we have
$$
 \pi_0(H,K_H)=\overline{\pi_{0}}(\pi(H))/K_{H}.
$$

The action of $\Gal(\oQ/E_H)$ on $\pi_0(H,K_H)$ is given by the
reciprocity morphism  (\cite{De2} 2.6.1.1)
$$
r_{(H,X_H)} \colon \Gal(\oQ/ E_H)\longrightarrow \overline{\pi_{0}}(\pi(H)).
$$
 The morphism $r_{(H,X_H)}$ factors through
$\Gal(\oQ/E_H)^\ab$ which is identified via the global class field theory with
$\pi_{0}(\pi(T_{E_H}))$. 

Let $C$ be the torus $H/H^\der$.
To $(H,X_H)$ one associates two Shimura data $(C,\{x\})$ and
$(H^\ad, X_{H^\ad})$. The field $E_H$ is the composite of
$E(C,\{x\})$ and $E(H^\ad,X_{H^\ad})$  by  the proposition 3.8 of \cite{De1}. There are morphisms
of Shimura data
\begin{equation}\label{eqtt}
\theta^\ab\colon (H,X_H)\longrightarrow (C,\{x\}) \mbox{ and }
\theta^\ad\colon (H,X_H)\longrightarrow (H^\ad, X_{H^\ad}).
\end{equation}
Let $r_{(C,\{x\})}$ be the reciprocity morphism associated with $(C,\{ x\})$. 
Then $r_{(C,\{x\})}$ is induced from a morphism of algebraic tori
$r_{C}:T_{E(C, \{ x \})}\longrightarrow C$. Let $F$ be a finite extension of $E(C, \{x\})$.
The composite of the norm from $T_{F}$ to $T_{E(C, \{x\})}$ with $r_C$ is a surjective morphism
of algebraic tori that we still denote $r_C \colon T_{F} \lto C$.
This morphism induces a reciprocity morphism from $\Gal(\ol\QQ/F)$ to $\overline{\pi_0}{\pi(C)}$
that we still denote $r_{(C,\{x\})}$. 
Notice that as $E_H$ contains $E(C, \{x\})$, we have a morphism $r_C \colon T_{E_H}\lto C$.


%
It is convenient and sometimes essential to
make the assumption
that $H$ is the generic Mumford-Tate group 
on $X_H$. Below we prove a lemma which
will allow us to make this assumption.
Let $X^+$ be a connected component of $X$. Then $G(\QQ)_+$ is the 
stabiliser of $X^+$ in $G(\QQ)$ (see \cite{MilneSV} prop. 5.7.b).
Let $\Gamma:=G(\QQ)_{+}\cap K$ and $S$ be the component 
$\Gamma\backslash X^+$ of $\Sh_{K}(G,X)$. Note that $S$ is the image of 
$X^+\times\{1\}$ in $\Sh_{K}(G,X)$.

\begin{lem}\label{MTgenerique}
 Let $V$ be a special subvariety of $S$.
There exists a Shimura subdatum $(H,X_{H})$ of $(G,X)$ such that
$H$ is the generic Mumford-Tate group on $X_{H}$ and
$V$ is  the image of a connected component of $\Sh_{K\cap H(\AAA_{f})}(H,X_{H})$
in $\Sh_{K}(G,X)$ by the natural map induced by the inclusion $(H, X_H)\subset (G,X)$
(we emphasize here that no Hecke correspondence is involved).

There exists a connected component $X_{H}^+$ of $X_H$ contained in $X^+$
such that
$V$ is the image of $X_H^+\times \{ 1 \}$ in $\Sh_K(G,X)$. 
\end{lem}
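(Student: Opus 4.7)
The plan is to start from an arbitrary Shimura subdatum realising $V$ and then shrink its group to the generic Mumford--Tate group on the relevant component, checking that the associated connected Shimura subvariety is unchanged. By the definition of a special subvariety, there exist a Shimura subdatum $(H',X_{H'})$ of $(G,X)$ and a connected component of $\Sh_{K\cap H'(\AAA_{f})}(H',X_{H'})$ whose image in $\Sh_{K}(G,X)$ is $V$. Since $V\subset S$ and $S$ is the image of $X^{+}\times\{1\}$, after relabelling components we may assume that $V$ is the image of $X_{H'}^{+}\times\{1\}$ for some connected component $X_{H'}^{+}\subset X^{+}$; equivalently $V=\Gamma_{H'}\backslash X_{H'}^{+}$ with $\Gamma_{H'}=H'(\QQ)_{+}\cap K$.

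Let $H$ be the generic Mumford--Tate group on $X_{H'}^{+}$, i.e.\ the $\QQ$-subgroup of $H'$ such that $\MT(x)=H$ for every Hodge-generic $x\in X_{H'}^{+}$. Using the density of Hodge-generic points one shows two structural facts: $H$ is normal in $H'$ (for $h'\in H'(\QQ)_{+}$ sending a Hodge-generic point to a Hodge-generic point, $h'Hh'^{-1}=H$), and the quotient $H'/H$ is a torus, equivalently $H\supset H'^{\der}$. Now fix a Hodge-generic $x\in X_{H'}^{+}$; the associated morphism $h_{x}\colon\SSS\to H'_{\RR}$ then factors through $H_{\RR}$. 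Let $X_{H}$ be the $H(\RR)$-conjugacy class of $h_{x}$ and $X_{H}^{+}$ its connected component containing $h_{x}$. A routine verification of Deligne's axioms shows that $(H,X_{H})$ is a Shimura datum, and $H$ is tautologically the generic Mumford--Tate group on $X_{H}$ since it is $\MT(h_{x})$.

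The crucial point is the equality $X_{H'}^{+}=X_{H}^{+}$ inside $\mathrm{Hom}(\SSS,H'_{\RR})$. Because $H\triangleleft H'$ and $H\supset H'^{\der}$, we have the almost direct product decomposition $H'=H\cdot Z_{H'}(H)$: the quotient map $H'\to H'/H$ has abelian target, so $Z_{H'}(H)$ surjects onto it. Hence any element of $H'(\RR)^{+}$ is a product of an element of $H(\RR)^{+}$ and one of $Z_{H'}(H)(\RR)^{+}$; the latter centralises $h_{x}(\SSS)\subset H_{\RR}$ and so fixes $h_{x}$ under conjugation. Therefore the $H'(\RR)^{+}$ and $H(\RR)^{+}$ orbits of $h_{x}$ coincide, giving $X_{H'}^{+}=X_{H}^{+}\subset X^{+}$.

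Combining these steps, the image of $X_{H}^{+}\times\{1\}$ under the natural map $\Sh_{K\cap H(\AAA_{f})}(H,X_{H})\to\Sh_{K}(G,X)$ is exactly $V$, and $V$ is a component of the image of $\Sh_{K\cap H(\AAA_{f})}(H,X_{H})$ since it is irreducible and closed and contains the image of the connected component through $X_{H}^{+}\times\{1\}$. The main technical point I expect to have to pin down carefully is the pair of structural claims that $H$ is normal in $H'$ with $H'/H$ a torus: it rests on density of Hodge-generic points together with the $H'(\QQ)_{+}$-equivariance of the Mumford--Tate construction, and everything else is formal manipulation of Shimura data.
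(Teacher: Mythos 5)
Your proof is correct in substance but takes a genuinely different route from the paper's, and the two are worth contrasting.

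The paper's argument is more direct: take a Hodge generic point $v\in V$, lift it to $x\in X^{+}$, set $H=\MT(x)$, $X_{H}=H(\RR)\cdot x$, $X_{H}^{+}=H(\RR)^{+}\cdot x$, and invoke the cited result (\cite{Ul2}, Lemme 3.3) that $(H,X_{H})$ is a Shimura subdatum. Then the image $V'$ of $X_{H}^{+}\times\{1\}$ is a special subvariety containing $v$; since $v$ is Hodge generic in $V$, the variety $V$ is the \emph{smallest} special subvariety containing $v$, so $V\subset V'$, and one concludes $V=V'$ by an irreducibility and dimension argument. This never mentions the ambient datum $(H',X_{H'})$ realising $V$ at all: the whole structural analysis of $H\triangleleft H'$, $H\supset H'^{\der}$, etc.\ that you carry out is bypassed by the minimality observation. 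Your approach, by contrast, starts with an arbitrary realisation of $V$, shows the generic Mumford--Tate group $H$ is normal in $H'$ with $H\supset H'^{\der}$, and proves the key equality $X_{H}^{+}=X_{H'}^{+}$ by decomposing $H'(\RR)^{+}$. Both approaches work; yours is more hands-on and self-contained, at the cost of needing several intermediate lemmas, while the paper's outsources the construction to \cite{Ul2} and uses the abstract minimality property.

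Two points in your write-up deserve sharpening. First, "after relabelling components we may assume $V$ is the image of $X_{H'}^{+}\times\{1\}$" is slightly too quick: a component of the image of $\Sh_{K\cap H'(\AAA_{f})}(H',X_{H'})$ contained in $S$ is the image of $X_{H'}^{+}\times\{g\}$ for some $g\in H'(\AAA_{f})$, and to set $g=1$ one must write $g=\gamma k$ with $\gamma\in G(\QQ)_{+}$, $k\in K$ and replace $(H',X_{H'})$ by its conjugate under $\gamma^{-1}$ — a relabelling of the subdatum, not merely of components. Second, your justification for the decomposition $H'=H\cdot Z_{H'}(H)$ — "the quotient map $H'\to H'/H$ has abelian target, so $Z_{H'}(H)$ surjects onto it" — is not a proof as stated: abelianness of the target alone gives nothing. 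The correct reason is that $H\supset H'^{\der}$, so $H'/H$ is a quotient of $H'^{\ab}=H'/H'^{\der}$, together with the standard fact that the centre $Z(H')$ of a connected reductive group surjects onto $H'^{\ab}$; since $Z(H')\subset Z_{H'}(H)$, you get the desired surjection (and in fact the stronger $H'=H\cdot Z(H')$, which also follows from $H$ mapping onto $H'^{\ad}$, itself a consequence of normality and axiom SV3). With those fixes the argument is complete.
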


\begin{proof}

Let $v\in V\subset S$ be a Hodge generic point of $V$ and $x\in X^+$ mapping to $v$.
Let $H$ be the Mumford-Tate group of $x$, $X_{H}:=H(\RR).x$ and $X_{H}^+:=H(\RR)^+.x$.
Then $(H,X_{H})$ is a Shimura subdatum of $(G,X)$ (see for example \cite{Ul2}, Lemme 3.3).
Therefore the image $V'$ of $X_{H}^+\times\{1\}$ in $\Sh_{K}(G,X)$ is a special subvariety containing $v$.
As $v$ is Hodge generic in $V$, it follows that  $V$ is the smallest special subvariety of $\Sh_{K}(G,X)$
containing $v$. Therefore $V\subset V'$. As $V$ and $V'$ are irreducible and have the same dimension
$\dim(X_{H}^+)$ we have $V=V'$.

\end{proof}

\emph
{In view of this lemma, 
for the rest of this section, we only consider Shimura subdata $(H,X_{H})\subset (G,X)$
such that $H$ is the generic Mumford-Tate group on $X_{H}$. In particular we will assume that $G$
is the generic Mumford-Tate group on $X$.}

\begin{lem} \label{inclusion}
Suppose that the centre $Z(G)(\RR)$ is compact.
Let $(H,X_H)$ and $K_H$ be as above, with
$H$ being the generic Mumford-Tate group on $X_H$.
Let $f\colon Sh_{K_H}(H,X_H)\longrightarrow Sh_K(G,X)$ be the
morphism induced by the inclusion $(H,X_H)$ into $(G,X)$.

The restrictions of 
$$
f\colon \Sh_{K_H}(H,X_H)\longrightarrow f (\Sh_{K_H}(H,X_H))
$$
to the irreducible components of $\Sh_{K_H}(H,X_H)$
are generically finite. Moreover the degrees of the restrictions of $f$ 
to the irreducible components
of  $\Sh_{K_H}(H,X_H)$
are uniformly bounded when $(H,X_{H})$  varies.
 Furthermore, if $K$ is neat, then
$f$ is generically injective and the restriction of $f$ to the Hodge generic locus
is injective. See (\cite{Bo} sec. 17.1) for a definition of
 a neat subgroup of $G(\QQ)$ and (\cite{Pi} sec. 0.6) for the definition of a neat compact open subgroup
 of $G(\AAA_{f})$.
In particular, the number of irreducible components
of $\Sh_{K_H}(H,X_H)$ is, up to a constant independent of  $(H,X_H)$ ,
 bounded  by  the number of irreducible components of its image in $\Sh_K(G,X)$.
\end{lem}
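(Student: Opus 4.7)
The plan is to analyse $f$ component by component, compute the fibre above a Hodge generic point using the Mumford--Tate condition, and exploit the hypothesis that $Z(G)(\RR)$ is compact to obtain both finiteness and a uniform bound on the degree.

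First I would describe an irreducible component of $\Sh_{K_H}(H,X_H)$ as $Y_g := \Gamma_H(g)\backslash X_H^+$ for some $g\in H(\AAA_f)$, with $\Gamma_H(g) := gK_Hg^{-1}\cap H(\QQ)_+$; its image under $f$ sits in the component $Z_g := \Gamma(g)\backslash X^+$ of $\Sh_K(G,X)$, where $\Gamma(g) := gKg^{-1}\cap G(\QQ)_+$. For a Hodge generic point $x\in X_H^+$, the fibre of $f|_{Y_g}$ above the image of $x$ in $Z_g$ consists of the orbits $\Gamma_H(g)\gamma x$ with $\gamma\in\Gamma(g)$ and $\gamma x\in X_H^+$. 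Since $\mathrm{MT}(x)=H$ and hence $\mathrm{MT}(\gamma x)=\gamma H\gamma^{-1}$ is contained in $H$ and has the same dimension as $H$, necessarily $\gamma\in N_G(H)(\QQ)_+$. The fibre therefore injects into the coset space $\Gamma_H(g)\backslash\bigl(\Gamma(g)\cap N_G(H)(\QQ)_+\bigr)$, which in turn injects as a discrete subgroup of $N_G(H)(\RR)_+/H(\RR)_+$.

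Next I would exploit that $Z(G)(\RR)$ compact forces $Z_G(H)(\RR)$ compact: indeed, $Z_G(H)$ centralises $h_x(\SSS)$ for every $x\in X_H$, so it lies in the stabiliser in $G(\RR)$ of any chosen point of $X_H$, which is compact modulo $Z(G)(\RR)$. Combined with the standard identity $N_G(H)^0 = H\cdot Z_G(H)^0$ (valid for connected reductive $H\subseteq G$) and the finiteness of $\pi_0(N_G(H))\hookrightarrow\mathrm{Out}(H)$, this shows that $N_G(H)(\RR)_+/H(\RR)_+$ is a compact Lie group. A discrete subgroup of a compact Lie group is finite, which yields generic finiteness of $f|_{Y_g}$. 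Uniformity in $(H,X_H)$ and $g$ would then follow from standard volume/covolume estimates for arithmetic subgroups of reductive subgroups of the fixed group $G$.

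Finally, assuming $K$ is neat, any $\gamma\in\Gamma(g)$ is neat, and its image in the algebraic group $N_G(H)/H$ remains neat (neatness being preserved under algebraic quotients). A neat element of finite order in a compact Lie group must be trivial, since its eigenvalues would be roots of unity; hence the finite neat subgroup constructed above is trivial, meaning $\Gamma(g)\cap N_G(H)(\QQ)_+ = \Gamma_H(g)$. This gives injectivity of $f|_{Y_g}$ on the Hodge generic locus of $Y_g$. A parallel ad\`elic argument across distinct components, applied to pairs $(x_1,g_1)$ and $(x_2,g_2)$ with $x_i$ Hodge generic, extends this to injectivity of $f$ on the whole Hodge generic locus of $\Sh_{K_H}(H,X_H)$; in particular $f$ is generically injective. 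The final claim on numbers of components is then immediate from the uniform bound on the degree. The main obstacle I foresee is establishing that uniform bound: controlling the covolumes of the lattices $\Gamma_H(g)$ simultaneously as $(H,X_H)$ varies is precisely what makes the compactness of $Z(G)(\RR)$ essential.
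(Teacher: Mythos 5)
Your overall strategy --- reduce the fibre above a Hodge generic point to a coset space in the normaliser $N_G(H)$, show that $N_G(H)(\RR)/H(\RR)$ is compact using the compactness of $Z(G)(\RR)$, and conclude finiteness, then triviality under neatness --- is the same strategy the paper follows. The arguments for the compactness of $N_G(H)(\RR)/H(\RR)$ also essentially agree: you invoke $N_G(H)^0 = H\cdot Z_G(H)^0$ and the compactness of $Z_G(H)(\RR)$, whereas the paper first establishes $N_G(H)^0$ is reductive via a Hodge-theoretic argument on the unipotent radical, writes $N_G(H)^0 = HL$, and shows $L(\RR)$ is compact because $L(\RR)^+$ commutes with $H(\RR)^+$ and hence stabilises $X_H$ --- but the mechanism is the same. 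You also correctly observe that a neat torsion element must be trivial.

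The difference in packaging is that the paper argues directly in adelic double-coset terms with two arbitrary points $\overline{(x_1,h_1)}$ and $\overline{(x_2,h_2)}$, extracting $q\in G(\QQ)$ and $k\in K$ and pushing $q$ into $G' = N_G(H)/H$, whereas you argue component-by-component on $Y_g = \Gamma_H(g)\backslash X_H^+$ and then remark that a ``parallel adelic argument'' handles distinct components. That parallel argument is in fact exactly the paper's calculation, so this is a valid alternative decomposition; it buys a concrete picture of the fibre inside one component at the cost of having to redo the work to get global injectivity on the Hodge generic locus.

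The genuine weakness is in the uniformity claim. You say uniformity ``would then follow from standard volume/covolume estimates for arithmetic subgroups of reductive subgroups of the fixed group $G$,'' and then flag in your final sentence that controlling these covolumes as $(H,X_H)$ varies is the main obstacle. This is precisely the wrong tool, and the obstacle you foresee does not arise in the paper's argument. The paper reduces the uniformity question to the neat case from the very start: fix once and for all a neat compact open $K'\subset K$ of finite index; then $f$ restricted to components is generically injective at level $K'$, and the degree of $f$ at level $K$ is bounded by the index $[K:K']$, a constant that depends only on $K$ and not on $(H,X_H)$ at all. No covolume estimates are needed, and the compactness of $Z(G)(\RR)$ enters only to prove injectivity in the neat case, not to control lattice covolumes. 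You should replace the volume-estimate step by this finite-index reduction, which also repairs your ``in particular'' conclusion on the number of components: at a Hodge generic point the fibre of $f$ has at most $[K:K']$ elements, so at most $[K:K']$ components of $\Sh_{K_H}(H,X_H)$ can map onto a given component of the image.

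Two smaller points worth tightening. First, your claim that the fibre ``injects as a discrete subgroup of $N_G(H)(\RR)_+/H(\RR)_+$'' requires knowing that $H(\RR)_+$ is normal in $N_G(H)(\RR)_+$ and that $\Gamma_H(g)$ is exactly the kernel of $\Gamma(g)\cap N_G(H)(\QQ)_+ \to N_G(H)(\RR)_+/H(\RR)_+$; both hold but should be stated, and the image is a subset rather than automatically a subgroup until that normality is established. Second, the identity $N_G(H)^0 = H\cdot Z_G(H)^0$ is indeed standard for $H$ connected reductive, but since the paper takes care to verify the reductivity of $N_G(H)^0$ from scratch via the polarisability of $\mathrm{Lie}(R_u)$, you should at least note that the identity follows from the Lie algebra computation $\mathfrak{n}_{\mathfrak{g}}(\mathfrak{h}) = \mathfrak{h} + \mathfrak{z}_{\mathfrak{g}}(\mathfrak{h})$, valid because $\mathfrak{h}$ acts semisimply on $\mathfrak{g}$.
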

 
\begin{rem}\label{rem2.3}
The assumption that $Z(G)(\RR)$ is compact implies that
the stabiliser in $G(\RR)$ of any point $x\in X$ is compact.
 As a consequence, for any Shimura subdatum $(H,X_H)$
of $(G,X)$, the centre $Z(H)(\RR)$ is compact. This is in particular the case when
$G$ is semisimple of adjoint type.

Indeed, let $x\in X$. By the general theory of symmetric spaces
the stabiliser of $x$ in $G(\RR)$ is compact 
modulo $Z(G)(\RR)$. By assumption, $Z(G)(\RR)$  is compact, therefore the stabiliser of $x$ in $G(\RR)$ is compact.

Let $(H,X_H)$ be a Shimura subdatum of $(G,X)$. Let $x_{H}\in X_{H}$. Then 
$Z(H)(\RR)$ is contained in the stabiliser of $x_{H}$ in $G(\RR)$. Hence $Z(H)(\RR)$ is compact.
\end{rem}

\begin{proof} 
First note that it suffices to prove that the
morphism $f$ is generically injective when $K$ is neat. Indeed,
any compact open subgroup $K$ of $G(\AAA_f)$ contains a neat
compact open subgroup $K'$ of finite index (see 
\cite{Pi} sec. 0.6). Using the generic injectivity of
$\Sh_{K'_H}(H,X_H)\longrightarrow \Sh_{K'}(G,X)$, one easily sees
that the degrees of the restrictions of $f$ to the irreducible components
of  $\Sh_{K_H}(H,X_H)$ are  bounded by the index of $K'$
in $K$.

Suppose that $K$ is neat. Let $\ol{(x_1,h_1)}$ and
$\ol{(x_2,h_2)}$ be two points of $\Sh_{K_H}(H,X_H)$ having the
same image by $f$. As we are proving injectivity on the Hodge generic locus, 
we assume that $\MT(x_1)=\MT(x_2)=H$.

There exist an element $q$ of $G(\QQ)$ and an element $k$ of $K$
such that $x_2 = q x_1$ and $h_2 = q h_1 k$.

The fact that $\MT(x_1)=\MT(x_2)=H$ implies that $q$ belongs to the
normaliser $N_G(H)(\QQ)$ of $H$ in $G$. Therefore $k$ belongs to
$N_{G}(H)(\AAA_{f})\cap K$.
Let us check that the
group $N_G(H)^0$ is reductive. There is an element $x$ of $X$ that
factors through $N_G(H)_{\RR}$. Then $x(\SSS)$ normalises the unipotent
radical $R_u$ of $N_G(H)$ hence $Lie(R_u)$ is a rational
polarisable Hodge structure and the Killing form is non
degenerate on $Lie(R_u)$. It follows that $R_u$ is reductive and
therefore is trivial. 

We claim that the group $G':=N_{G}(H)/H$ has the
property that $G'(\RR)$ is compact. Indeed as $N_{G}(H)^0$ is reductive
$N_{G}(H)^0$ is an almost direct product in $G$ of the
form $N_{G}(H)^0=HL$ with $L$ reductive. 
We will show that $L(\RR)$ is compact. It is enough to prove that $L(\RR)^+$ is compact.
We claim that $L(\RR)^+$ and $H(\RR)^+$ commute. Consider the commutator map
from $L(\RR) \times H(\RR)$ to $L(\RR) \cap H(\RR)$. The image of $L(\RR)^+ \times H(\RR)^+$
is a connected subgroup of $L(\RR) \cap H(\RR)$. Furthermore the intersection
$L(\RR) \cap H(\RR)$ is finite. It follows that the image of $L(\RR)^+ \cap H(\RR)^+$ by the commutator map
 is trivial and therefore $L(\RR)^+$ and $H(\RR)^+$ commute.

Let $x \in X_H$. We view $x$ as a morphism from $\SSS$ to $H_{\RR}$.
Then $x(\SSS) \subset H(\RR)^+$ and hence $x(\SSS)$ is fixed by conjugation by $L(\RR)^+$.
It follows that $L(\RR)^+$ stabilises any point of $X_H$.
By remark \ref{rem2.3}, $L(\RR)^+$ and hence $L(\RR)$ is compact.
As the image of $L(\RR)$ in $G'(\RR)$ is of finite index in $G'(\RR)$
the group $G'(\RR)$ is compact.

The equality $h_2 = q h_1 k$ shows that $q$ belongs to $H(\AAA_f)\cdot(N_{G}(H)(\AAA_{f})\cap K)$. Indeed, as $q$ is in $N_G(H)(\AAA_f)$, there is a $h'\in H(\AAA_f)$ such that
$$
q h_1 = h' q
$$
and we get  $q = {h'}^{-1}h_2 k^{-1} \in H(\AAA_f) \cdot (N_{G}(H)(\AAA_{f})\cap K)$.

 It follows that the image $\overline{q}$ of $q$ in $G'(\QQ)$
is contained in the image $K'$ of $N_{G}(H)(\AAA_{f})\cap K$  which is a compact subgroup of $G'(\AAA_{f})$. 
Therefore $\overline{q}\in \Gamma':=G'(\QQ)\cap K'$ and as
$G'(\RR)$ is compact, the group $\Gamma'$ is finite.
As $K$ is neat, $K'$ is neat by (\cite{Bo} Cor. 17.3 p. 118) and $\Gamma'$ is trivial.
It follows that $q$ belongs to $H(\QQ)$ and $k$ to $K_H := H(\AAA_f)\cap K$.
We conclude that the points $\ol{(x_1,h_1)}$ and
$\ol{(x_2,h_2)}$ of $\Sh_{K_H}(H,X_H)$ are equal.
This finishes the proof.
\end{proof}

Recall that $T$ is the connected centre of $H$ and $C$ is $H/H^{\der}$.
Note that there is an isogeny $\alpha :\ T\longrightarrow C$ with kernel
$T\cap H^\der$, given by the restriction of the quotient map $H\lto H/H^{\der}$
to $T$. We will make use of the following lemma.

\begin{lem} \label{noyau}
The order of the group $T\cap H^\der$ is uniformly bounded as
$(H,X_H)$ ranges through the Shimura subdata of $(G,X)$.
Let $\rho: {\tilde{H}}\rightarrow H^{\der}$ be  the universal covering map.
Then the degree of $\rho$ is   uniformly bounded  as well.
\end{lem}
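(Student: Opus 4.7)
The plan is to reduce both assertions to the finiteness of possible root data for semisimple groups of bounded dimension. From the fixed faithful representation $G \hookrightarrow \GL_n$ introduced at the beginning of Section \ref{section2.1}, each $H$ is a closed reductive subgroup of $\GL_n$, so $\dim H^{\der} \leq \dim H \leq \dim G$ uniformly as $(H, X_H)$ ranges over the Shimura subdata of $(G,X)$.

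First I would observe that, since $T$ is the connected centre of $H$, every element of $T$ commutes with every element of $H^{\der}$, and hence the finite group scheme $T \cap H^{\der}$ is contained in $Z(H^{\der})$. Similarly, as $\rho\colon \tilde{H} \to H^{\der}$ is the universal covering of a semisimple group, its kernel is central in $\tilde{H}$, so $\deg \rho \leq |Z(\tilde{H})|$; moreover $|Z(H^{\der})|$ divides $|Z(\tilde{H})|$ via the isogeny $\rho$. Both claimed bounds therefore reduce to a uniform upper bound on $|Z(\tilde{H})|$.

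Then I would invoke the classification of semisimple algebraic groups over $\oQ$: a semisimple group of dimension at most $d$ has its root datum in one of finitely many isomorphism classes, since the ranks of the simple factors and the number of simple factors are bounded by $d$, and the Dynkin types are thereby confined to a finite list (with the centre of the simply connected form being $\ZZ/(n+1)$ for $A_n$, $\ZZ/2$ for $B_n$ and $C_n$, and so on). Applying this with $d = \dim G$ yields a constant $C = C(\dim G)$ such that $|Z(\tilde{H})| \leq C$ for every Shimura subdatum, and this simultaneously bounds $|T \cap H^{\der}|$ and $\deg \rho$. The argument poses no genuine obstacle beyond invoking this classification; the uniformity is entirely encoded in the dimension bound imposed by the ambient group $G$.
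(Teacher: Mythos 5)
Your proposal is correct and takes essentially the same approach as the paper: both observe that $T\cap H^{\der}\subset Z(H^{\der})$, reduce both bounds to controlling $|Z(\tilde{H})|$, and then invoke the finiteness of possible Dynkin types (the paper bounds the rank by $\mathrm{rk}\,G_{\CC}$ rather than the dimension, but this is an inessential variation). No gap.
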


\begin{proof} As $T\cap H^\der$ is contained in the centre of
$H^\der$, we just need a uniform bound on orders of the centres of
the universal coverings of connected semisimple subgroups of $G$. Let $L$ be a connected
semisimple subgroup of $G$ and let $D_L$ be the Dynkin diagram of
$L_{\CC}$. As the rank of $L_{\CC}$ is bounded by the rank of
$G_{\CC}$, there are only finitely many possibilities for $D_L$.
For each of these possibilities, the order of the centre of
the universal covering of $L_{\CC}$ is bounded by the index of the lattice of roots in the
lattice of weights.
\end{proof}

We recall that we have fixed a faithful representation $V$ of $G$.
Let  $\rho_{T} \colon T\hookrightarrow \GL(V)$ be the restriction  of the
representation $G\subset \GL(V)$ to $T$. 
We now prove some uniformity results regarding the characters occurring in $\rho_{T}$
 and the reciprocity morphism $r_C:T_{E_{H}} \rightarrow C$.
 
 \begin{lem} \label{reflex}
There is  a constant $R_0$ such that for any sub Shimura-datum
$(H,X_H)$, the degree
of the reflex field $E(H,X_H)$ over $E(G,X)$ is bounded by $R_0$.
\end{lem}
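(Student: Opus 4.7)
The plan is to exploit the decomposition $E(H, X_H) = E(H^\ad, X_{H^\ad}) \cdot E(C, \{x\})$ recalled in the paper (Proposition 3.8 of \cite{De1}, with $C = H/H^\der$), and to bound the degree over $\QQ$ of each factor uniformly in $(H, X_H)$. The inclusion $E(G, X) \subseteq E(H, X_H)$ is immediate: the $H(\oQ)$-conjugacy class of the Hodge cocharacter $\mu_h$ is contained in the corresponding $G(\oQ)$-conjugacy class, so any $\sigma \in \Gal(\oQ/\QQ)$ stabilising the former setwise also stabilises the latter. Therefore a uniform bound on $[E(H, X_H) : \QQ]$ suffices.

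For the adjoint factor, I would choose a maximal torus $T^\ad$ of $H^\ad$ defined over $\QQ$ and let $L_{T^\ad}$ denote its splitting field. After $H^\ad(\oQ)$-conjugation one may assume that $\mu_h^\ad$ factors through $T^\ad_{\oQ}$; given $\sigma \in \Gal(\oQ/L_{T^\ad})$, which acts trivially on $X_*(T^\ad)$, a short calculation shows that $\sigma(\mu_h^\ad)$ is $H^\ad(\oQ)$-conjugate to $\mu_h^\ad$, so $\sigma$ preserves the $H^\ad(\oQ)$-conjugacy class. This gives $E(H^\ad, X_{H^\ad}) \subseteq L_{T^\ad}$. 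Since $H^\ad$ is defined over $\QQ$, the Galois group $\Gal(L_{T^\ad}/\QQ)$ embeds into the finite group of automorphisms of the root datum of $H^\ad$; as $\mathrm{rank}(H^\ad) \leq \mathrm{rank}(G)$, the order of this automorphism group is bounded by a constant depending only on $G$.

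For the central factor $E(C, \{x\})$, the torus $C$ is isogenous to the connected centre $Z(H)^0$, which is a subtorus of $G$, so $\dim C \leq \mathrm{rank}(G)$. Because $C$ is commutative, conjugacy classes of cocharacters of $C$ are singletons, so $E(C, \{x\})$ is simply the field of definition of the cocharacter $\mu_h^\ab \in X_*(C)$, and is therefore contained in the splitting field $L_C$ of $C$. The Galois group $\Gal(L_C/\QQ)$ embeds as a finite subgroup of $\GL(X^*(C)) \cong \GL_{\dim C}(\ZZ)$, and by the Minkowski--Jordan bound on finite subgroups of $\GL_d(\ZZ)$, its order is bounded in terms of $d \leq \mathrm{rank}(G)$.

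Combining the two bounds gives $[E(H, X_H) : \QQ] \leq [L_{T^\ad} : \QQ] \cdot [L_C : \QQ] \leq R_0$ for a constant $R_0$ depending only on $G$, and hence $[E(H, X_H) : E(G, X)] \leq R_0$. The only nontrivial step is the verification that $E(H^\ad, X_{H^\ad}) \subseteq L_{T^\ad}$, for which one must trace the Galois action through the auxiliary conjugation bringing the maximal torus of $H^\ad$ that contains the image of $\mu_h^\ad$ onto the $\QQ$-rational torus $T^\ad$; this is what makes the argument slightly more delicate than a routine splitting-field computation.
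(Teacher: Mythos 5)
Your proof is correct, and it arrives at the same ultimate bound (a Minkowski/Friedland-type estimate on finite subgroups of $\GL_d(\ZZ)$ applied to a Galois group acting on a character lattice of bounded rank), but it gets there by a different route than the paper. The paper invokes a single citation — Milne's Remark 12.3(a) in \cite{MilneSV}, which states that $E(H,X_H)$ is contained in any splitting field of $H$ — and then bounds the degree of that splitting field by choosing a maximal torus $T \subset H$ of dimension $d \leq \mathrm{rank}(G)$ and noting that $\Gal(\oQ/\QQ)$ acts on $X^*(T)$ through a finite subgroup of $\GL_d(\ZZ)$. You instead apply Deligne's decomposition $E(H,X_H) = E(H^{\ad},X_{H^{\ad}})\cdot E(C,\{x\})$ and bound each factor separately: for the abelian factor this reduces immediately to a splitting field of $C$ (a torus of bounded dimension), while for the adjoint factor you re-derive a containment $E(H^{\ad},X_{H^{\ad}})\subseteq L_{T^{\ad}}$ by tracking the Galois action through a conjugation that moves $\mu_h^{\ad}$ into a fixed $\QQ$-rational maximal torus $T^{\ad}$; this is, in effect, a hands-on proof of the Milne fact restricted to adjoint data. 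The paper's argument is shorter and makes the rank-bound for the relevant torus immediate (it is a maximal torus of $H \subset G$); yours is more self-contained — it does not depend on Remark 12.3(a) of \cite{MilneSV} — at the cost of the small conjugation argument you rightly flag as the delicate step. Both are valid, and the constant $R_0$ obtained depends only on the rank of $G$ in either case.
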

\begin{proof}
By Remark 12.3 (a) of \cite{MilneSV}, $E(H,X_H)$ is contained in any splitting field of $H$.
The degree of any such splitting field is bounded in terms of the dimension of $G$ only.  
Indeed, let $T$ be a maximal torus in $H$. The dimension $d$ of $T$ is bounded in terms of the dimension of $G$.
The degree of the splitting field is the size of the image of the representation of $\Gal(\ol\QQ/\QQ)$ on the character group
$X^*(T)$ of $T$. This is a finite subgroup of $\GL_d(\ZZ)$ and its size is bounded in terms of $d$ only (see for example \cite{Fri}). 
\end{proof}

Fix a positive integer $R\ge R_{0}$. For any Shimura subdatum $(H,X_{H})$ of $(G,X)$,
we let $F$ be a finite extension of  $\QQ$ of degree bounded by $R$ containing
 the reflex field $E(H,X_H)$.  We assume that such a choice of $R$ is made in the rest of the text.

Assume moreover in this section only that $F$ is a Galois extension of $\QQ$.
By our assumption,
there are only finitely many possibilities for the isomorphism class of $\Gal(F/\QQ)$.
For the purposes of the present paper we can take $F$ to be equal to the Galois closure
of $E(H,X_{H})$.  However, we introduce extra flexibility on the field $F$
for some applications in \cite{KY}.

We may thus assume that $\Gal(F/\QQ)$
is isomorphic to a fixed abstract group $\Delta$. Let $T_F$ be the torus $\Res_{F/\QQ}\GG_{m,F}$.
We write $H=T \cdot H^\der$ and we let $\mu \colon \GG_{m,\CC}\lto H_\CC$ be the cocharacter $h_\CC(z,1)$
where $h$ is an element of $X_H$ such that $\MT(h)=H$. We write
\begin{equation}\label{Hodge1}
V_{\CC}=\oplus V_{\CC}^{p,q}
\end{equation}
the Hodge decomposition of $V_{\CC}$ induced by $h$.

The composition of $\mu$ with $H_{\CC}\lto C_{\CC}$ gives a cocharacter $\GG_{m,\CC}\lto C_\CC$ which we
denote by $\mu_C$. The cocharacter $\mu_C$ is defined over $F$. Each $\sigma$ in $\Delta$ defines a
character $\chi_\sigma$ and a cocharacter $\mu_\sigma$ of the torus $T_F$. Moreover
$ X^*(T_{F})=\oplus_{\sigma\in \Delta}\ZZ \chi_{\sigma}$ and $ X_{*}(T_{F})=\oplus_{\sigma\in \Delta}\ZZ \mu_{\sigma}$.
In this way, we get a ''canonical'' basis for the character (respectively cocharacter)
group of the torus $T_F$. There is a natural pairing
$$
<,> \colon X^*(T_F)\times X_*(T_F)\lto \ZZ
$$
defined by $<\chi_\sigma, \mu_\tau> = \delta_{\sigma,\tau}$ for all $\sigma,\tau$ in $\Delta$. As $F$ contains $E(C,\{x\})$, we have a reciprocity morphism $r_C \colon T_F \lto C$.
The morphism $r_C\colon T_F\lto C$ induces the morphism $r_{C*} \colon
X_{*}(T_F)\rightarrow X_{*}(C)$ which sends the cocharacter $\mu_\sigma$ to $\sigma(\mu_C)$ and 
 an injection $X^*(C)\subset X^*(T_F)$. We
identify $X^*(C)$ with its image in $X^*(T_F)$.

By lemma \ref{noyau},  the isogeny $\alpha\colon T\lto C$ has uniformly bounded 
degree, say $m$. Therefore 
there is a unique surjective morphism of algebraic tori $r \colon T_F\lto T$ such that
$$
\alpha \circ r = r_C^m.
$$
The morphism $r$ identifies $X^*(T)$ with a submodule of $X^*(T_F)$.
 We will consider the coordinates of the 
characters in $X^*(T)$ with respect to the basis of $X^*(T_{F})$ described previously.

\begin{lem} \label{basis_characters}

With respect to the chosen basis of $X^*(T_F)$ and the identification of $X^*(C)$ with
a submodule of $X^*(T_F)$, there is a finite subset of 
$X^*(C)$ generating $X^*(C)\otimes \QQ$
 whose coordinates are bounded uniformly on $(H,X_H)$. 

The coordinates of the characters $\chi$ of $T$ 
intervening in the representation $\rho_{T} \colon T\hookrightarrow \GL(V)$, with respect to the basis of $X^*(T_F)$
described above, are bounded uniformly on $(H,X_H)$.

The size of the torsion of $X^*(T_F)/ X^*(T)$ is bounded uniformly on $(H,X_H)$.
\end{lem}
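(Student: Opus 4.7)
The key input is that the Hodge cocharacter $\mu$ acts on $V$ with integer weights uniformly bounded in absolute value by a constant $w$ depending only on $V$ as a representation of $G$, since the Hodge numbers of $V$ at any $h\in X$ lie in a fixed finite set. Combined with lemmas \ref{reflex} and \ref{noyau}, which bound $|\Delta|$ and $m=|\ker\alpha|$ respectively, this should control every quantity in the statement.

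For part (a), I would decompose $V_F=\bigoplus_i W_i$ into $H_F$-irreducibles and set $\chi_i:=\det(W_i)$. Each $\chi_i$ is a character of $H_F$ and factors through $C_F$, so $\chi_i\in X^*(C)$, with at most $\dim V$ of them. By faithfulness of $V|_H$, the central characters $\psi_i$ of the $W_i$ restrict to a family generating $X^*(T)\otimes\QQ$; since $\alpha^*\chi_i=\dim(W_i)\,\psi_i|_T$, the $\chi_i$ generate $X^*(C)\otimes\QQ$ via $\alpha^*\otimes\QQ$. The coordinate of $r_C^*\chi_i$ at $\mu_\sigma$ is $\langle\chi_i,\sigma\mu_C\rangle=\langle\sigma^{-1}\chi_i,\mu_C\rangle$ by Galois-equivariance of the pairing; since $\sigma^{-1}$ permutes the $W_i$, we have $\sigma^{-1}\chi_i=\chi_j$ for some $j$, and $\langle\chi_j,\mu_C\rangle$ equals the sum of $\mu$-weights on $W_j$, which is bounded by $w\dim V$.

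For part (b), given a $T$-weight $\chi$ of $\rho_T$, realise $\chi=\psi_i|_T$ as the restriction of the central character of some $W_i$. Then $\alpha^*\chi_i=\dim(W_i)\chi$, and $\alpha\circ r=r_C^m$ gives $r^*\alpha^*=m\,r_C^*$, whence $r^*\chi=(m/\dim W_i)\,r_C^*\chi_i$ in $X^*(T_F)\otimes\QQ$. The coordinates of $r^*\chi$ are therefore bounded by $mw\dim V$.

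For part (c), split using the exact sequence
\[
0\to r_C^*X^*(C)/r^*X^*(T)\to X^*(T_F)/r^*X^*(T)\to X^*(T_F)/r_C^*X^*(C)\to 0.
\]
Since $r^*X^*(T)=r_C^*\beta^*X^*(T)$, the first term is $r_C^*(X^*(C)/\beta^*X^*(T))$, of order $\deg\beta=m^{\dim C-1}$, bounded by lemma \ref{noyau}. The torsion of the last term is $X^*(\pi_0(\ker r_C))$, which by duality has the same size as $[X_*(C):\sum_\sigma\ZZ\,\sigma\mu_C]$; bounding this index is the main obstacle. The plan is to extract from (a) a $\QQ$-basis $\chi_1,\dots,\chi_r$ of $X^*(C)\otimes\QQ$ (with $r=\dim C$) and choose $S\subset\Delta$ of size $r$ such that the matrix $A=(\langle\chi_j,\sigma\mu_C\rangle)_{j,\sigma\in S}$ is invertible; part (a) bounds its entries and hence $|\det A|$. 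For any $\chi\in X^*(C)\otimes\QQ$ satisfying $\langle\chi,\sigma\mu_C\rangle\in\ZZ$ for all $\sigma\in\Delta$, Cramer's rule forces $|\det A|\chi\in\sum_j\ZZ\chi_j\subset X^*(C)$, so the dual index is bounded by $|\det A|^{\dim C}$, completing (c).
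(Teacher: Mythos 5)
Your proof is correct, but it takes a genuinely different route from the paper, especially in part (a). The paper works with the characters of $C$ occurring in the representation $\rho_C$ on $V$ (induced by $\rho_T^m$). Bounding $\langle\chi'_i,\sigma\mu_C\rangle$ then requires a fairly involved digression: choosing a maximal torus $T_{H^{\der}}$ of $H^{\der}_\CC$, splitting $\mu = \mu_T + \mu_{H^{\der}}$, and invoking the theory of symmetric spaces (only finitely many minuscule cocharacters) to bound $\langle\chi_{H^{\der}},\mu_{H^{\der}}\rangle$ before deducing a bound for $\langle\chi_T,\mu_T\rangle$ by difference. Your choice of $\chi_i = \det(W_i)$ is a neat shortcut: since $\det(W_i)$ automatically kills $H^{\der}$ it factors through $C$, while at the same time $\langle\det(W_i),\mu\rangle$ is literally the sum of the $\mu$-weights on $W_i$, which are Hodge numbers of $V$ and hence uniformly bounded. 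This avoids the $\mu = \mu_T + \mu_{H^{\der}}$ decomposition entirely. Part (b) is then handled in essentially the same way as the paper. For part (c) the paper merely states it is a ``direct consequence'' of the coordinate bounds; you supply an actual argument, which is worthwhile, though a bit heavier than needed. Two small expository points. First, you never define $\beta$; it is the isogeny $C\to T$ with $\beta\circ\alpha=[m]_T$, which exists because $W=T\cap H^{\der}$ has order $m$ and is therefore killed by $[m]$, and $r=\beta\circ r_C$ follows from $\alpha\circ r = r_C^m$ because $[m]_T$ is an epimorphism. Second, a shorter route to (c) is available once you have (b): the characters $\psi_i|_T$ occurring in $\rho_T$ generate $X^*(T)$ as a \emph{lattice} (not just rationally) by faithfulness, and they have uniformly bounded coordinates in $X^*(T_F)$; the torsion of $X^*(T_F)/r^*X^*(T)$ is the last determinantal divisor of the matrix of these generators, hence at most the absolute value of any nonzero $(\dim T)\times(\dim T)$ minor, which is uniformly bounded. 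This is presumably the ``direct consequence'' the authors had in mind, and it avoids the exact sequence, the dual isogeny $\beta$, and the lattice-duality step.
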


\begin{proof}

As the isogeny $T\lto C$ has  order  $m$,
 the representation $\rho_{T}^{m}$ induces a representation
$(V,\rho_C)$ of $C$.

The Shimura datum $(C,\{x\})$ as before induces
a Hodge structure $V(\rho_{C})$ on $V$ by composing $x$ with $\rho_C$.
Let
\begin{equation}\label{Hodge2}
V_\CC (\rho_{C})= \oplus_{(p,q)} V_{\CC}(\rho_{C})^{p,q}
\end{equation}
be the associated Hodge decomposition.
Let $\{ \chi'_i \}\in X^*(C)$ be the set of characters that intervene in the representation $\rho_C$.
As $\rho_{T}$ is faithful, the $\{\chi'_{i}\}$ generate $X^*(C)\otimes \QQ$.
We will show that
the coordinates of the $\chi'_{i}$ in the chosen basis of $X^*(T_F)$ are uniformly bounded.

These coordinates are the
$$
<\chi'_i, r_{C*}(\mu_\sigma)>=<\chi'_i, \sigma(\mu_C)>=<\sigma^{-1}(\chi'_{i}),\mu_{C}>
$$
(where  $<,>\colon X^*(C)\times X_*(C) \lto \ZZ$ is the canonical pairing).
These quantities are the integers  $p$ appearing  in the Hodge decomposition (\ref{Hodge2})
given by  composing $x$ with $\rho_C$. We just need to show that these weights are uniformly
bounded on $(H^{\der},X_{H^{\der}})$.  This will be deduced by comparing  
the $p$'s appearing  in the Hodge decomposition $V(\rho_{C})$ with the ones of $V$
given in the equation (\ref{Hodge1}).

As the characters $\{ \chi_i \}\in X^*(T)$ occuring in $\rho_T$ are such that
$\chi'_{i}=\chi_{i}^m$ with $m$ uniformly bounded, the result for the coordinates of $\chi_{i}$
in the  chosen basis of $X^*(T_F)$ is a consequence of the corresponding result for the $\chi'_{i}$.
 The statement concerning the size of the torsion of $X^*(T_F)/ X^*(T)$
is a direct consequence of the result on the coordinates of the $\chi_{i}$.

Let $T_{H^\der}$ be a maximal torus of $H_\CC^\der$ such that
$\mu$ factors through $T_\CC \cdot T_{H^\der}$. Let $\widetilde{T_\CC}$
be the almost direct product $T_{\CC} \cdot T_{H^\der}$, the torus
$\widetilde{T_\CC}$ is a maximal torus of $H_{\CC}$.

Let $\cR$ be the root system associated to $(T_{H^{\der}}, H_{\CC}^\der)$.
There are only a finite, uniformly bounded number of possibilities
for $\cR$. The representation of $H$ on $V$ induces a representation of
$H^\der$. The dimensions of the irreducible factors of this
representation are uniformly bounded hence there is only a finite
(uniformly bounded) number of characters of $T_{H^\der}$ that
intervene in the representation.

As $T\cap H^\der$ is finite, we have a direct sum decomposition
$$
X^*(\widetilde{T_\CC})_{\QQ} = X^*(T_\CC)_{\QQ} \oplus
X^*(T_{H^\der})_\QQ
$$
and a similar decomposition for  $X_{*}(\widetilde{T_\CC})_{\QQ}$. 

Let $\chi$ be a character of $\widetilde{T}_{\CC}$ that intervenes in
the representation $V_{\CC}$ of  $\widetilde{T}_{\CC}$. The direct
sum decompositions above give the decompositions $\chi = \chi_T +
\chi_{H^\der}$ and $\mu=\mu_{T}+\mu_{H^\der}$.

 The values taken by the 
$<\chi, \mu>$ are the $p$ such that $V_\CC^{p,q}$ is
non-zero in the Hodge decomposition 
(\ref{Hodge1}). Hence they are finite in number and uniformly bounded.
On the other hand, we have
$$
<\chi, \mu> = <\chi_T, \mu_T> +
<\chi_{H^\der}, \mu_{H^\der}>
$$
where $\chi_T$ and $\chi_{H^\der}$ are the restrictions of $\chi$
to $T$ and $T_{H^\der}$ respectively. In the decomposition
$$
\mu= \mu_{T}+\mu_{H^\der}
$$
there is only a finite number of possibilities for
$\mu_{H^\der}$. This is a consequence of the theory
of symmetric spaces. To see this, we decompose the root system $\cR$
into irreducible factors $\cR_i$. The components of the
$\mu$ on $\cR_i$ are either trivial or correspond to
minuscule weights of the dual root system $\cR_i^\vee$.

It follows that $<\chi_{H^\der}, \mu_{H^{\der}}>$ takes only
finitely many values and so does $<\chi_{T}, \mu_T>$.
As $m$ is uniformly bounded the $<\chi^m_{T},\mu_{T}>$
are uniformly bounded. This finishes the proof
as the $<\chi^m_{T},\mu_{T}>$ are the $p$'s
appearing in the Hodge decomposition (\ref{Hodge2}).

\end{proof}

Finally for later use  we prove a certain number of uniformity results concerning the reciprocity morphism. 
We keep the previous notations, $(H,X_{H})$ is a Shimura subdatum of $(G,X)$,
$H=T \cdot H^{\der}$, $C=H/H^{\der}$ and $F$, as before, is a finite Galois extension of $\QQ$ containing  the Galois
closure of the reflex field $E(H,X_{H})$ of $(H,X_{H})$, of degree over $\QQ$ bounded by some constant $R$ depending on $(G,X)$ only.  

 The reciprocity morphism
$$
r_{(C,\{x\})}:\Gal(\oQ/F)^{\ab}\simeq \pi_{0}\pi(T_{F})\rightarrow \ol{\pi_{0}}(\pi(C))
$$
factors through  ${\pi_{0}}(\pi(C))$ 
and
$$
r_{(H,X_{H})}:\Gal(\oQ/F)^{\ab}\simeq \pi_{0}\pi(T_{F})
\rightarrow \ol{\pi_{0}}(\pi(H))
$$
factors through ${\pi_{0}}(\pi(H))$. We will 
also write $r_{(C,\{x\})}$ and $r_{(H,X_{H})}$ for
the induced maps  from $\Gal(\oQ/F)$ or $\Gal(\oQ/F)^{\ab}$
to ${\pi_{0}}(\pi(C))$
and ${\pi_{0}}(\pi(H))$ respectively.
The map $r_{C}:T_{F}\rightarrow C$ induces
a map
$$
r_{C,\AAA/\QQ}: \pi(T_{F})\rightarrow \pi(C)
$$
 and $r_{(C,\{x\})}$
is obtained from $r_{C,\AAA/\QQ}$ by applying the functor $\pi_{0}$.
In view of (\cite{De2} 2.5.3), the map 
$r_{(H,X_{H})}$ is also obtained  by applying  the functor $\pi_{0}$ to 
a map
$$
r_{H,\AAA/\QQ}: \pi(T_{F})\rightarrow \pi(H).
$$

The projection $H\rightarrow C$ will be denoted by $p$
so $p=\theta^{\ab }$ in the notations of   section 
\ref{section2.1}, equation (\ref{eqtt}).

If $\alpha: G_{1}\rightarrow G_{2}$ is a morphism of reductive $\QQ$-groups we write
$ \alpha_{l}:G_{1}(\QQ_{l})\rightarrow G_{2}(\QQ_{l}),  \alpha_{\infty}: G_{1}(\RR) \rightarrow G_{2}(\RR)\mbox{ and } \alpha_{\AAA}: G_{1}(\AAA)\rightarrow G_{2}(\AAA)$
for the associated morphisms at the level of $\QQ_{l}$-points, real points and adelic points
respectively. 
The map $p:H\rightarrow C$ induces maps
$$
p_{\AAA/\QQ}:\pi(H)\rightarrow \pi(C),
$$ 
$$\pi_{0}(p_{\AAA/\QQ}):\pi_{0}(\pi(H))\rightarrow \pi_{0}(\pi(C))$$
and 
$$
\ol{\pi_{0}}(p_{\AAA/\QQ}):\ol{\pi_{0}}(\pi(H))\rightarrow \overline{\pi_{0}}(\pi(C)).
$$

Finally, for any reductive $\QQ$-group $G_{1}$, and any $g\in G_1(\AAA)$ we write
$\overline{g}$ for the image of $g$ in $\pi(G_{1})$,
and $\pi_{0}(\ol{g})$ (resp. $\ol{\pi_{0}}(\ol{g})$)
for the image of $g$ in $\pi_{0}(\pi(G_{1}))$ (resp. $\ol{\pi_{0}}(\pi(G_{1}))$).

\begin{lem} \label{lemm1}
There is an  integer $n_1$ such that for any sub Shimura datum 
$(H,X)$ of $(G,X)$ the following holds.
\begin{itemize}
\item[(a)]  For any prime number $l$ and for any $m \in T(\QQ_l)$, $m^{n_1}\in p_{l}^{-1}(r_{C,l}(T_{F}(\QQ_{l})))$.
For any $m\in T(\RR)$, $m^{n_1}\in p_{\infty}^{-1}(r_{C,\infty}(T_{F}(\RR)))$.

\item[(b)] Let us fix some models of $T_{F}$, $H$, $T$ and $C$ over $\ZZ$.
Then for any $l$ big enough (depending on $(H,X_{H})$ and the choice of the models
over $\ZZ)$ and any $m\in T(\ZZ_{l})$, $m^{n_1}\in p_{l}^{-1}(r_{C,l}(T_{F}(\ZZ_{l})))$.

\item[(c)] For any $m\in T(\AAA)$, $m^{n_{1}}\in p_{\AAA}^{-1}(r_{C,\AAA}(T_{F}(\AAA)))$
and the class $\overline{m^{n_1}}$ of $m^{n_{1}}$
in $\pi(H)$ is in 
$p_{\AAA/\QQ}^{-1}(r_{C,\AAA/\QQ}(\pi(T_F)))$.
\end{itemize}
\end{lem}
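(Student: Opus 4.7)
The plan is to reduce all three assertions to a uniform statement about lifting powers of elements through $r \colon T_F \to T$, then prove that statement via Galois cohomology. The reduction rests on the identity $\alpha \circ r = r_C^{m_H}$, where $m_H := \deg \alpha$ is uniformly bounded by Lemma \ref{noyau}: if at a place $v$ of $\QQ$ we can write $m^{n_1} = r_v(t)$ for some $t \in T_F(\QQ_v)$, then
\[
p_v(m^{n_1}) = \alpha_v(r_v(t)) = r_{C,v}(t)^{m_H} = r_{C,v}(t^{m_H}) \in r_{C,v}(T_F(\QQ_v)),
\]
which gives (a). Parts (b) and (c) reduce similarly to the corresponding lifting problem in $\ZZ_l$- or adelic points, respectively.

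The cokernel of $r_v$ is controlled by Galois cohomology. Let $K_r := \ker r$, a $\QQ$-group of multiplicative type with character module $X^*(T_F)/X^*(T)$. Since $T_F = \Res_{F/\QQ}\GG_m$ is quasi-trivial, Shapiro's lemma combined with Hilbert 90 yields $H^1(\QQ_v, T_F) = 0$ at every place, and the long exact sequence gives
\[
T(\QQ_v)/r_v(T_F(\QQ_v)) \cong H^1(\QQ_v, K_r).
\]
I then bound the exponent of this group uniformly in $v$ and $(H, X_H)$. By Lemmas \ref{reflex} and \ref{basis_characters}, as $(H, X_H)$ varies, $F$ has bounded degree, $\Gal(F/\QQ)$ realizes only finitely many isomorphism classes of finite groups $\Delta$, and $X^*(T_F)/X^*(T)$ is a $\Delta$-module of bounded rank with bounded torsion. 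By the Jordan-Zassenhaus theorem, $K_r$ thus ranges over a finite set of $\QQ$-groups of multiplicative type up to isomorphism. For each fixed such $K$, the group $H^1(\QQ_v, K)$ is finite at every $v$, vanishes at almost every $v$, and has exponent uniformly bounded in $v$: for the torus part $K^0$ a corestriction-restriction argument on the splitting field bounds the exponent by $[F:\QQ]$, while for $\pi_0(K)$ the exponent divides $|\pi_0(K)|$. Taking the maximum over the finite list of possibilities and over all places of $\QQ$ yields $n_1$.

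For (b), for $l$ big enough $K_r$ acquires a smooth model over $\ZZ_l$ with étale $\pi_0$, and Lang's theorem shows that the exponent of $H^1(\Spec \ZZ_l, K_r)$ is still bounded by $|\pi_0(K_r)|$; hence $m^{n_1}$ lifts already to $t \in T_F(\ZZ_l)$, and the same identity as in (a) produces $\alpha_l(m^{n_1}) \in r_{C,l}(T_F(\ZZ_l))$. For (c), I patch local lifts: at each place $v$ use (a) to find $t_v \in T_F(\QQ_v)$ with $r_{C,v}(t_v) = \alpha_v(m_v^{n_1})$, taking $t_l \in T_F(\ZZ_l)$ at almost all finite $l$ by (b); the resulting $t = (t_v) \in T_F(\AAA)$ then satisfies $r_{C,\AAA}(t) = p_\AAA(m^{n_1})$. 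The statement in $\pi(H)$ follows by projecting everything to $\pi$: $p_{\AAA/\QQ}(\overline{m^{n_1}}) = \overline{r_{C,\AAA}(t)} = r_{C,\AAA/\QQ}(\bar t) \in r_{C,\AAA/\QQ}(\pi(T_F))$. The main obstacle is the uniform bound on $H^1(\QQ_v, K_r)$: it rests on the finiteness up to isomorphism of the $K_r$'s, itself a consequence of the explicit coordinate bounds of Lemma \ref{basis_characters} and Jordan-Zassenhaus, together with a careful handling of local Galois cohomology of groups of multiplicative type.
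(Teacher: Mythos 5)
Your proof is correct, and it takes a genuinely different route from the paper. The paper's argument is explicit: using the coordinate bounds of Lemma~\ref{basis_characters}, they construct a morphism $s\colon C\to T_F$ over $\oQ$ with $r_C\circ s = [n]$, average over $\Gal(F/\QQ)$ to get $\Phi = \prod_\sigma s^\sigma$ defined over $\QQ$ satisfying $r_C\circ\Phi = [n[F:\QQ]]$, and conclude that $C(\QQ_l)/r_{C,l}(T_F(\QQ_l))$ is killed by $n[F:\QQ]$ (a factorial of the bound then gives $n_1$); the real place is treated separately via the structure of $C(\RR)$. You instead run the long exact sequence for $1\to K_r\to T_F\xrightarrow{r}T\to 1$: quasi-triviality of $T_F=\Res_{F/\QQ}\GG_m$ kills $H^1(\QQ_v,T_F)$, so $T(\QQ_v)/r_v(T_F(\QQ_v))\cong H^1(\QQ_v,K_r)$, and you bound its exponent uniformly from the two pieces $K_r^0$ (restriction--corestriction to the splitting field $F$, giving exponent $\le [F:\QQ]$) and $\pi_0(K_r)$ (exponent $\le |\pi_0(K_r)|$, bounded by the torsion of $X^*(T_F)/X^*(T)$ via Lemma~\ref{basis_characters}). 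Note that lifting to $T$ rather than $C$ is also a slight shift: you produce $t\in T_F(\QQ_v)$ with $r_v(t)=m^{n_1}$ and then push through $\alpha\circ r = r_C^{m_H}$, whereas the paper works with the cokernel of $r_{C,v}$ directly. Both are valid; your cohomological argument is cleaner at the archimedean place (no separate case analysis of $C(\RR)$) and avoids constructing $s$, at the cost of invoking some nonsplit Galois cohomology. Two small remarks: the appeal to Jordan--Zassenhaus is unnecessary --- the exponent bound on $H^1(\QQ_v,K_r)$ follows directly from the bounds on $[F:\QQ]$ and on the torsion of $X^*(T_F)/X^*(T)$, without needing $K_r$ to range over finitely many isomorphism classes --- and in (b) you should note that smoothness of a $\ZZ_l$-model of $K_r$ requires $l$ to be prime to the torsion of $X^*(K_r)$ and unramified in the splitting field, conditions that hold for $l$ large enough, which is exactly what the statement allows.
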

\begin{proof}
 The element $x$ gives a
cocharacter $\mu_\CC\colon\GG_{m\CC}\lto C_\CC$ defined by $\mu_{\CC}(z) =  
x_\CC(z,1)$.
The morphism $r_{C}\colon T_{F}\lto C$ corresponds to  
the morphism
on cocharacter groups
$X_*(T_{F})\lto X_*(C)$ which sends the cocharacter  
$\mu_\sigma\in X_*(T_{F})$
(induced by $\sigma \in {\rm Hom}(F,\oQ)$) to $\sigma(\mu_{\CC})$.
The lemma \ref{basis_characters} says that there is a basis $(\chi_i)$
of characters of $C$ such that the $<\chi_i, \sigma(\mu_{\CC})>$
are uniformly bounded.  We first verify that there is an integer $n$
 bounded  independently of $(H,X_{H})$
and a morphism $\Phi :C\rightarrow T_{F}$ such that 
$f:=r_{C}\circ s$ is the $n[F:\QQ]$-th power homomorphism from $C$ to $C$.

As before, we identify the character group $X^*(C)$ with a sub-$\ZZ$-module of $X^*(T_F)$
via $X^*(r_C)$.
Using lemma \ref{basis_characters} we see there exists a basis $\psi_1,\dots, \psi_t$ of $X^*(T_F)$ and integers $d_1, \dots, d_u$
bounded independently of $(H,X_H)$, such that $d_1 \psi_1, \dots , d_u \psi_u$
is a basis of $X^*(C) \subset X^*(T_F)$.
Let $n := \prod d_i$. This is an integer bounded independently of $(H,X_H)$.
Then the morphism $X^*(T_F)\lto X^*(C)$ sending
$\psi_i$ to $n \psi_i$ for $1\leq i \leq u$ and $\psi_i$ to $0$ for $i>u$,
corresponds to a morphism $s \colon C \lto T_F$ defined over $\oQ$.
The morphism $r_C\circ s$ sends $x\in C(\oQ)$ to $x^n$.
Let $\sigma \in \Gal(\oQ/\QQ)$. The morphism  $s^{\sigma} \colon C \lto T_F$ is defined at the level of
character groups by
$$
X^*(s^{\sigma})(\psi_i) =  X^*(s)^{\sigma}(\psi_i^{\sigma^{-1}}).
$$
We claim that at the level of $\oQ$-points, $r_C \circ s^{\sigma}$ is again $x \mapsto x^n$.
Let $x \in C(\oQ)$. We have:
\begin{eqnarray*}
r_C(s^{\sigma}(x)) &=& r_C(\sigma(s(\sigma^{-1} x))) =\\
 &=& \sigma( r_C \circ s (\sigma^{-1}(x))) = \\
&=& \sigma((\sigma^{-1}x)^n) = x^n
\end{eqnarray*}

We now define, 
$$
\Phi = \prod_{\sigma\in \Gal(F/\QQ)}  s^{\sigma}
$$
What precedes shows that $\Phi(x) = x^{[F : \QQ]n}$ for $x\in C(\oQ)$ and furthermore $\Phi$
is a morphism of $\QQ$-tori, therefore $\Phi$ is the $x \mapsto x^{[F : \QQ] n}$ on $C$.

To simplify notations we now replace $n$ by $n [F: \oQ]$.
Note that $n$ is uniformly bounded as $[F:\QQ]$ is  bounded by $R$. 
It follows that $r_{C,l}(T_{F}(\QQ_{l}))$ contains $U_n:=f_{l}(C(\QQ_{l}))$.
The kernel of $f_l\otimes{\ol{\QQ_l}}$ is killed by $n$. Writing down the corresponding Galois cohomology
sequence, we see that $C(\QQ_l)/f_{l}(C(\QQ_{l}))$
is killed by $n$. Therefore, $C(\QQ_l)/r_{C,l}(T_{F}(\QQ_{l}))$ is also killed by $n$.

At the level of real points,  notice that the map $f_{\infty}$ induces
a surjective morphism $C(\RR)^{+}\lto C(\RR)^+$ where $C(\RR)^+$ is the neutral component of $C(\RR)$.
By \cite{Vo}, 10.1,
\begin{equation}\label{RTorus}
C(\RR) =  (\RR^*)^a \times (\CC^*)^b \times {\rm SO}(2)(\RR)^c
\end{equation}
where $a,b,c$ are  some integers.
It follows that $\pi_0(C(\RR))= C(\RR)/C(\RR)^+$ is killed by two (notice that $\CC^*$ and $ {\rm SO}(2)(\RR)$ are connected).
For later use, notice that $|\pi_0(C(\RR))|$ is bounded by $2^a$ (hence uniformly).
Let $n'$ be the maximum of all the possible integers $n$ as above.
Let $n_1 = \max(2, n')!$, then $n_1$ satisfies the conditions of (a).

For (b), let $\theta\in C(\ZZ_{l})$. Then $\theta^{n_1}=r_{C,l}s_{l}(\theta)$. 
For any $l$ large enough $T_{F}(\ZZ_{l})$ is the maximal compact open
subgroup of $T_{F}(\QQ_{l})$. As $s_{l}(C(\ZZ_{l}))\subset T_{F}(\QQ_{l})$
is compact, for any $l$ large enough $s_{l}(C(\ZZ_{l}))\subset T_{F}(\ZZ_{l})$.
Therefore, for $l$ large enough, $s_{l}(\theta)\in T_{F}(\ZZ_{l})$ and $\theta^{n_1}\in r_{C,l}(T_{F}(\ZZ_{l}))$.

The part (c) is a direct consequence of (a) and (b).
 \end{proof}

From this point and for the rest of the paper we make the assumption that $Z(G)(\RR)$ is compact, which in particular implies that $C(\RR)$ is compact
by the remark \ref{rem2.3}.

\begin{lem} \label{bound_kernel}
There exists an integer $n_{0}$ such that for any Shimura subdatum $(H,X_{H})$ of $(G,X)$ any element of the kernel  
of 
$$
\ol{\pi_{0}}(p_{\AAA/\QQ}):\ol{\pi_{0}}(\pi(H))\rightarrow \ol{\pi_{0}}(\pi(C))
$$
 is killed by
$n_{0}$.
\end{lem}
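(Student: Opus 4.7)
The plan is to exploit the almost direct product decomposition $H = T\cdot H^{\der}$ together with the uniform bounds from lemma~\ref{noyau} on the orders of $T\cap H^{\der}$ and of $Z := \ker(\rho\colon \widetilde H\to H^{\der})$. The basic input is that any finite commutative group scheme $M$ of order $n$ over a field (or over $\AAA$) satisfies $n\cdot H^1(\cdot, M) = 0$, so each cohomological obstruction encountered below is killed by a uniformly bounded integer; $n_0$ will be the product of a few such integers.

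First, I would take $\bar h \in \ker \overline{\pi_0}(p_{\AAA/\QQ})$ and lift it to $h \in H(\AAA)$. From the exact sequence $1 \to T\cap H^{\der}\to T\times H^{\der}\to H\to 1$, the group $H(\AAA)/(T(\AAA)\cdot H^{\der}(\AAA))$ embeds into $H^1(\AAA, T\cap H^{\der})$ and so is killed by $m := |T\cap H^{\der}|$, giving $h^m = t_1 h_1'$ with $t_1\in T(\AAA)$ and $h_1'\in H^{\der}(\AAA)$. Similarly $(h_1')^M \in \rho(\widetilde H(\AAA))$ for $M := |Z|$. As $T$ is central in $H$, $t_1$ and $h_1'$ commute, so inside $\pi(H)$ I obtain $\overline{h}^{mM} = \overline{t_1^M}$.

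Next, since $C(\RR)$ is compact by remark~\ref{rem2.3}, one identifies $\overline{\pi_0}(\pi(C)) = C(\AAA)/(C(\QQ)\cdot C(\RR))$, and the hypothesis gives $\alpha(t_1^M) = c\cdot c_\infty$ with $c\in C(\QQ)$, $c_\infty\in C(\RR)$, where $\alpha := p|_T\colon T\to C$. Since $\ker\alpha = T\cap H^{\der}$ has order $\le m$, the quotients $C(\QQ)/\alpha(T(\QQ))$ and $C(\RR)/\alpha(T(\RR))$ are each killed by $m$, so $c^m = \alpha(s)$ and $c_\infty^m = \alpha(s_\infty)$ for some $s\in T(\QQ)$ and $s_\infty\in T(\RR)$. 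Then $t_1^{Mm}(ss_\infty)^{-1}$ lies in the finite group $(T\cap H^{\der})(\AAA)$ of exponent $\le m$, whence $t_1^{Mm^2} = s^m s_\infty^m \in T(\QQ)\cdot T(\RR)$.

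To conclude, $T(\QQ)\subset H(\QQ)$ is already trivial in $\pi(H)$, and the inclusion $T\subset Z(H) = \ker\lambda$ forces $T(\RR)\subset H(\RR)_+$, so $T(\RR)$ is trivial in $\overline{\pi_0}(\pi(H))$. Hence $\overline h^{m^3 M} = 0$ there, and $n_0 := m^3 M$ does the job, uniformly bounded by lemma~\ref{noyau}. The main obstacle is really just the bookkeeping: at each step one must identify the cohomological obstruction with a group killed by $m$ or $M$ and keep track of how the exponents accumulate, but no single deep ingredient is needed beyond lemma~\ref{noyau} itself.
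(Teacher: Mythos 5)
Your proof is correct and follows essentially the same route as the paper: decompose an element of the kernel as a product of a central ($T$-)part and a derived-group part using the uniform bounds of Lemma~\ref{noyau}, reduce to the $T$-part in $\pi(H)$, and then use the isogeny $\alpha\colon T\to C$ together with the hypothesis to express a bounded power of the $T$-part as an element of $T(\QQ)\cdot T(\RR)$, which dies in $\ol{\pi_0}(\pi(H))$. The only cosmetic differences are that the paper first reduces from $\ol{\pi_0}$ to $\pi_0$ at the cost of the uniformly bounded $|\pi_0(C(\RR))|$, then lifts $c_\infty$ along the surjection $T(\RR)^+\twoheadrightarrow C(\RR)^+$, whereas you stay with $\ol{\pi_0}$ throughout, replace that surjectivity by the $H^1(\RR,W)$ bound, and then dispose of $T(\RR)$ directly via the inclusion $T\subset Z(H)\subset\ker\lambda$, which puts $T(\RR)$ inside $H(\RR)_+$ and hence kills it in $\ol{\pi_0}(\pi(H))$; also the paper uses factorials of the uniform bounds to get a single $n_0$, so strictly speaking you should take $n_0$ to be a common multiple (such as $(\max m)^3\cdot(\max M)$ or a factorial) rather than the $(H,X_H)$-dependent quantity $m^3M$, but this is immediate from Lemma~\ref{noyau}.
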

\begin{proof}

Let $y\in H(\AAA)$ such that $\ol{{\pi_{0}}}(\ol{y})$ is in the kernel of
$\ol{\pi_{0}}(p_{\AAA/\QQ})$.
As $\pi_{0}(C(\RR))$  is of uniformly bounded order (by the description
of $C(\RR)$ given by equation \ref{RTorus}),
we may assume that 
${\pi_{0}}(\ol{y})$ is in the kernel
of 
$$
\pi_{0}(p_{\AAA/\QQ}): \pi_{0}(\pi(H))\rightarrow {\pi_{0}}(\pi(C)).
$$

Recall that  $T\cap H^\der$ is finite  
of uniformly bounded order by the lemma \ref{noyau}. Let $M$ be a uniform bound 
on this order and let $n_2:= M!$.

There exist an element $t$ in
$T(\AAA)$ and $h$ in $H^\der(\AAA)$ such that
$$
y^{n_2} = t \cdot h.
$$
By the lemma \ref{noyau}
the group $H^\der(\AAA)/\rho \widetilde{H}(\AAA)$ is killed by a uniformly bounded integer.
Let $M'$ a uniform bound for this integer and $n_{3}:=M'!$.  
Then   $\ol{y}^{n_2 n_3}$ and $\ol{t}^{n_3}$ 
coincide as elements of $\pi(H)$ and 
$\pi_{0}(\ol{y}^{n_2 n_3})$ and $\pi_{0}(\ol{t}^{n_3})$ 
coincide as elements of $\pi_{0}(\pi(H))$.

By the result of Deligne (\cite{De2} 2.2.3), 
$$
\pi_{0}(\pi(T))=\pi_{0}(T(\RR))\times T(\AAA_{f})/T(\QQ)^{-},
$$
where $T(\QQ)^{-}$ is the closure of $T(\QQ)$ in $T(\AAA_{f})$
for the adelic topology . 
As $T(\RR)$ is compact (by Remark \ref{rem2.3}), $T(\QQ)$ is discrete in 
$T(\AAA_{f})$ (see \cite{MilneSV}, thm 5.26).
Therefore 
$$
\pi_{0}(\pi(T))=\pi_{0}(T(\RR))\times T(\AAA_{f})/T(\QQ).
$$
As $C(\RR)$ is also  compact, in the same way we have
$$
\pi_{0}(\pi(C))=\pi_{0}(C(\RR))\times C(\AAA_{f})/C(\QQ).
$$
As a consequence we obtain $\ol{\pi_{0}}(\pi(C))=C(\AAA_{f})/C(\QQ)$.

Consider the exact sequence
$$
1\lto W\lto T \overset{\alpha}{\lto} C \lto 1
$$
 where $W = T\cap H^\der$.  Notice that the order of $W$ divides  $n_2$.
 We recall that the restriction of $p$ to $T$ is denoted $\alpha$.

As $\pi_{0}(\ol{y})$ (and hence $\pi_{0}(\ol{y}^{n_2 n_3})$) is in the kernel of $\pi_{0}(p_{\AAA/\QQ})$, we have
$$
p_{\AAA}(t^{n_3})=\alpha_{\AAA}(t^{n_3})=c \underline{c}_{\infty}
$$
with  $c\in C(\QQ)$ and $\underline{c}_{\infty}=(c_{\infty},1)$ is an element of $C(\AAA)$ with all finite components
trivial and with the component at infinity $c_{\infty}\in C(\RR)^{+}$.

As $\alpha_{\infty}$ induces a surjective map from $T(\RR)^{+}$ to $C(\RR)^{+}$ there
exists
 $\theta_{\infty}\in T(\RR)^{+}$  such that $\alpha_{\infty}(\theta_{\infty})=c_{\infty}$.
 Let
$\underline{\theta}_{\infty}$ be the element $(\theta_{\infty},1)$  of $T(\AAA)$. Then
$\alpha_{\AAA}(\underline{\theta}_{\infty})=\underline{c}_{\infty}$.

An $n_2$-th power of any element of $C(\QQ)$ is in the  
image of $T(\QQ)$
hence there exists a $q$ in $T(\QQ)$ such that
$$
\alpha_{\AAA}(t^{n_3 n_2}) = \alpha_{\AAA}(q) \alpha_{\AAA}(\underline{\theta}_{\infty}^{n_{2}}).
$$
It follows that
$$
t^{n_3 n_2} = q w \underline{\theta}_{\infty}^{n_{2}}
$$
where $w$ is in $W(\AAA)$.
As $W(\AAA)$ is killed by $n_2$, we see that $t^{n_3 n_2^2}=q^{n_2}\underline{\theta}_{\infty}^{n_{2}^{2}}$.
We deduce that the class $\pi_{0}(\overline{t}^{n_{3}n_{2}^{2}})$ of   $t^{n_3 n_2^2}$ in $\pi_{0}(\pi(H))$ is trivial.

We have 
$$
\pi_{0}(\ol{t})^{n_3 n_2^2} =\pi_{0}( \ol{y})^{n_3 n_2^3}
$$ and therefore a uniform power of $y$ has trivial image in $\pi_{0}(\pi(H))$.
\end{proof}

We can now prove the following:

\begin{prop} \label{gal_image}
There is an  integer $A$ such that for any $(H,X_{H})$ and $F$ as above and for any
$m\in T(\AAA)$,
the class $\ol{\pi_{0}}(\overline{m}^{A})$ of $m^A$ in $\ol{\pi_{0}}(\pi(H))$ is in $r_{(H,X_{H})}(\pi_{0}(\pi(T_{F})))=r_{(H,X_{H})}(\Gal(\oQ/F))$.
\end{prop}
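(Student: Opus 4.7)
The plan is to combine Lemma \ref{lemm1}(c) with Lemma \ref{bound_kernel}, using the functoriality of the reciprocity morphism applied to the projection $p = \theta^{\ab} \colon (H,X_H) \to (C,\{x\})$. Concretely, this functoriality gives the commutativity
$$
p_{\AAA/\QQ} \circ r_{H,\AAA/\QQ} = r_{C,\AAA/\QQ}
$$
as maps $\pi(T_F) \to \pi(C)$, which is explicit in the paragraph preceding the proposition and goes back to (\cite{De2}, 2.5.3).

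Given $m \in T(\AAA)$, I would first apply Lemma \ref{lemm1}(c) to the uniform integer $n_1$ to produce $\ol{t} \in \pi(T_F)$ with
$$
p_{\AAA/\QQ}(\ol{m^{n_1}}) = r_{C,\AAA/\QQ}(\ol{t}).
$$
Set $y := r_{H,\AAA/\QQ}(\ol{t}) \in \pi(H)$. The commutativity above yields $p_{\AAA/\QQ}(y) = r_{C,\AAA/\QQ}(\ol{t}) = p_{\AAA/\QQ}(\ol{m^{n_1}})$, so $\ol{m^{n_1}}\cdot y^{-1}$ lies in $\ker p_{\AAA/\QQ}$. Passing to $\ol{\pi_0}(\pi(H))$, the class $\ol{\pi_0}(\ol{m^{n_1}}\cdot y^{-1})$ belongs to the kernel of $\ol{\pi_0}(p_{\AAA/\QQ}) \colon \ol{\pi_0}(\pi(H)) \to \ol{\pi_0}(\pi(C))$.

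Now I would invoke Lemma \ref{bound_kernel} to kill this element by the uniform exponent $n_0$. Since $\ol{\pi_0}(\pi(H))$ is abelian, raising to the power $n_0$ gives
$$
\ol{\pi_0}(\ol{m}^{\, n_1 n_0}) = \ol{\pi_0}(y^{n_0}) = \ol{\pi_0}\bigl(r_{H,\AAA/\QQ}(\ol{t}^{\, n_0})\bigr) = r_{(H,X_H)}\bigl(\pi_0(\ol{t})^{n_0}\bigr),
$$
which belongs to $r_{(H,X_H)}(\pi_0(\pi(T_F))) = r_{(H,X_H)}(\Gal(\oQ/F))$ by the class field theory identification. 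Taking $A := n_0 n_1$, which is independent of $(H,X_H)$ since both $n_0$ and $n_1$ are uniform, completes the argument.

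The only subtle point is justifying the compatibility $p_{\AAA/\QQ} \circ r_{H,\AAA/\QQ} = r_{C,\AAA/\QQ}$; this is essentially packaged in the definitions above the statement via Deligne's construction, so once it is cited the rest of the proof is a direct chaining of the two preceding lemmas.
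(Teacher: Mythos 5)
Your outline is the same as the paper's: combine Lemma \ref{lemm1}(c) to land in the right coset of $\ker p_{\AAA/\QQ}$, invoke the functoriality of the reciprocity morphisms with respect to $p=\theta^{\ab}$, and finish by killing the remaining kernel element with Lemma \ref{bound_kernel}, yielding $A=n_0n_1$. The chaining is correct.

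However, the one step you wave off as ``packaged in the definitions'' is exactly the one the paper does not take for granted. You assert the commutativity $p_{\AAA/\QQ}\circ r_{H,\AAA/\QQ}=r_{C,\AAA/\QQ}$ at the level of maps $\pi(T_F)\to\pi(C)$, citing the paragraph before the proposition and Deligne 2.5.3. That paragraph does not assert this: it only records that $r_{(C,\{x\})}$ and $r_{(H,X_H)}$ are each obtained by applying $\pi_0$ to maps $r_{C,\AAA/\QQ}$ and $r_{H,\AAA/\QQ}$, with no claimed relation between the two. The paper's own proof states explicitly that the authors could not find a reference for even the weaker $\overline{\pi_0}$-level identity $r_{(C,\{x\})}=\overline{\pi_0}(p_{\AAA/\QQ})\circ r_{(H,X_H)}$, and it then supplies a proof via the action of $\Gal(\overline{\QQ}/F)$ on the $\pi_0$ of the full Shimura varieties $\Sh(H,X_H)$ and $\Sh(C,\{x\})$ and the $F$-rationality of $\Sh_p$. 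Your $\pi$-level compatibility is strictly stronger and would require unwinding Deligne's construction of $r_{H,\AAA/\QQ}$; it is not free. Either prove it, or descend to $\pi_0$ right after invoking Lemma \ref{lemm1}(c) and use the canonical-model argument for the $\overline{\pi_0}$-level identity, as the paper does. Once that identity is in place, your use of Lemma \ref{bound_kernel} and the conclusion $A=n_0 n_1$ are exactly the paper's.
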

\begin{proof}

By the lemma \ref{lemm1}(c), we have $p_{\AAA/\QQ}(\ol{m}^{n_1}) \in  r_{C,\AAA/\QQ}(\pi(T_{F}))$.
Hence, there is an element $\sigma$ of $T_{F}(\AAA)$  such that
$$
p_{\AAA/\QQ}(\ol{m}^{n_1})=r_{C,\AAA/\QQ}(\ol{\sigma}).
$$
Applying the functor $\ol{\pi_{0}}$ we get
$$
\ol{\pi_{0}}(p_{\AAA/\QQ})(\ol{\pi_{0}} (\ol{m}^{n_1}))=\ol{\pi_{0}}(r_{C,\AAA/\QQ}(\ol{\sigma}))=r_{(C,\{x\})}(\pi_{0}(\ol{\sigma}))
=\ol{\pi_{0}}(p_{\AAA/\QQ})(r_{(H,X_{H})}  (\pi_{0}(\ol{\sigma}))   ).
$$

The last equality is the natural functoriality of the reciprocity morphisms.
As we have not been able to find a reference  for this statement we briefly explain the proof. Note that 
$\pi_{0}(\Sh(H,X_{H}))$ is a principal homogeneous space under $\overline{\pi}_{0} (\pi (H))$
and $\pi_{0}(\Sh(C,\{x\}))$ is  a principal homogeneous space under $\overline{\pi}_{0}(\pi (C))$ (see \cite{De1} 2.1.16).
Let
 $$
\Sh_{p}:\Sh(H,X_{H})\rightarrow \Sh(C,\{x\})
$$
be the morphism of Shimura varieties induced by $p$.
 Let $x_{0}$ (resp. $y_{0}$)
 be some base points of 
$\pi_{0}(\Sh(H,X_{H}))$
(resp. $\pi_{0}(\Sh(C,\{x\}))$ ) such that $y_{0}=\pi_{0}(\Sh_{p})(x_{0})$.
Then for any $\alpha\in \overline{\pi}_{0}( \pi (H))$ we have
$$
\pi_{0}(\Sh_{p}) (\alpha.x_{0})=\overline{\pi}_{0}(p_{\AAA/\QQ})(\alpha) .y_{0}.
$$
By the theory of canonical models of Shimura varieties the morphism of Shimura varieties
 $\Sh_{p}$ is defined over $F$. Therefore for any
 $\theta\in \Gal(\oQ/F)$  
 $$
 \pi_{0}(\Sh_{p})(x_{0}^{\theta})=y_{0}^{\theta}.
 $$
 As $\Gal(\oQ/F)$ acts on $\pi_{0}(\Sh(H,X_{H}))$ (resp.  $\pi_{0}(\Sh(C,\{x\}))$)
 via $r_{(H,X_{H})}$ (resp. $r_{(C,\{x\})}$), we 
 have $x_{0}^{\theta}=r_{(H,X_{H})}(\theta).x_{0}$
 and $y_{0}^{\theta}=r_{(C,\{x\})}(\theta).y_{0}$.
 Therefore
 $$
 r_{(C,\{x\})}(\theta).y_{0}=\overline{\pi}_{0}(p_{\AAA/\QQ})(r_{(H,X_{H})}(\theta)).y_{0}
 $$
 which proves the claim.


It follows that there exists an element $y\in H(\AAA)$
such that $\ol{\pi_{0}}(\ol{y})$  is in the kernel of $\ol{\pi_{0}}(p_{\AAA/\QQ}):\ol{\pi_{0}}(\pi(H))\rightarrow \ol{\pi_{0}}(\pi(C))$ and such that
$$
\ol{\pi_{0}}(\ol{m}^{n_1}) = \ol{\pi_{0}}(\ol{y}) r_{(H,X_{H})}(\pi_{0}(\ol{\sigma})).
$$

Let $A= n_1 n_{0}$ with $n_{0}$ the integer given by the lemma \ref{bound_kernel}.

By the  lemma  \ref{bound_kernel},
$$
\ol{\pi_{0}}(\ol{m}^{A})= r_{(H,X_{H})}(\pi_{0}(\ol{\sigma}^{n_{0}})).
$$

\end{proof}
%
%
%

\subsection{Lower bounds for degrees of Galois orbits.}

In this section we consider a Shimura datum $(G,X)$ with $G$ semisimple of adjoint type
and we let $K$ be a compact open subgroup of $G(\AAA_f)$. We also fix a faithful
rational representation of $G$. 
We deal with the problem of bounding (below) the degree of Galois orbits
of geometric components of subvarieties of  $\Sh_{K}(G,X)$ defined over $\ol\QQ$.
We assume that $K\subset G(\AAA_{f})$ is  of the form
$K=\prod_{p}K_{p}$ for some compact open subgroups $K_{p}$ of $G(\QQ_{p})$.

Recall that we have fixed a faithful
representation of $G$ which allows us to view $G$ 
as a closed subgroup of some $\GL_n$. We may and do  assume that $K$  is
contained in $\GL_n(\widehat{\ZZ})$. 
Let ${\bf K}_{3}$ be the principal congruence subgroup of level $3$
of $\GL_{n}({\ZZ}_{3})$.
We assume that $K_{3}$ is contained in ${\bf K}_{3}$. Hence $K_{3}$ is neat
and  $K$ is neat
(see \cite{KY} sec.  4.1.5 and \cite{Pi} sec. 0.6).
All subvarieties are assumed to be closed.

Let $M$ be a projective variety over $\CC$, $Y$ be an irreducible  subvariety of $M$ and $\cL$
be an ample line bundle on $M$. Then $\deg_{\cL}(Y)$ is the degree of $Y$
computed with respect to $\cL$. Let $c_{1}(\cL)$ be the first Chern class of $\cL$.
If  $Y$ is irreducible of dimension $d$ then  $\deg_{\cL}(Y)$ is the intersection number
$c_{1}(\cL)^{d}.Y$ (see \cite{Fu} chap. 12, p. 211).
When $Y$ is reducible, the degree of $Y$ is defined to be the sum of the degrees of its irreducible components.

The Baily-Borel compactification of $\Sh_{K}(G,X)$ is denoted $\ol{\Sh_{K}(G,X)}$.
Let $\cL_{K}=\cL_{K}(G,X)$ be the ample line bundle on  $\ol{\Sh_{K}(G,X)}$ extending
the line bundle of holomorphic differential forms of maximal degree on $\Sh_{K}(G,X)$.
We say that $\cL_{K}$ is the Baily-Borel line bundle on $\ol{\Sh_{K}(G,X)}$. 
We will also use the notation $\cL_K$ for the Baily-Borel line bundle on the Baily-Borel compactification
of $\Sh_K(G,X)$ even in the case when $G$ is not of adjoint type (for example for  a sub-Shimura datum $(H,X_H)$ of $(G,X)$).

Let 
$Y$ be a subvariety of $\ol{\Sh_{K}(G,X)}$, we write $\deg(Y)=\deg_{\cL_{K}}(Y)$
the degree of $Y$ computed with respect to the Baily-Borel line bundle. 
Let  $Z$ be a subvariety of
$\Sh_{K}(G,X)$ and $\ol{Z}$ be its Zariski closure in $\ol{\Sh_{K}(G,X)}$, we'll write
$\deg(Z)$ for $\deg(\ol{Z})$.

\begin{defini}
Let $Y$ be a geometrically irreducible subvariety of $\Sh_K(G,X)$ defined over $\ol\QQ$.
Let $F$ be a number field containing $E(G,X)$.
We define the degree of the Galois orbit of $Y$, denoted $\deg(\Gal(\oQ/F) \cdot Y)$ 
to be the degree of the subvariety $\Gal(\oQ/F) \cdot Y$ of
$\Sh_{K}(G,X)$ calculated with respect to the line bundle $\cL_K$. 

Let $(H,X_H)$ be a Shimura subdatum of $(G,X)$ such that $H$ is the generic Mumford-Tate group on $X_{H}$. Let $K_H=K\cap H(\AAA_{f})$.
Let $Y$ be as above and suppose that $Y$ is the image in $\Sh_K(G,X)$ of a geometrically irreducible subvariety $Y_1$ of $\Sh_{K_H}(H,X_H)$.
Suppose that $F$ contains $E(H,X_H)$.
We define the \emph{internal degree} of  the Galois orbit of $Y$ to be the degree of $\Gal(\oQ/F) \cdot Y_1$ calculated with respect to $\cL_{K_H}$.
\end{defini}

Note that when $H$ is a torus (and hence $Y$ is a special point), $\deg(\Gal(\oQ/F) \cdot Y)$
is simply the number of conjugates of $Y$ under $\Gal(\oQ/F)$.

Let $V$ be a geometric
component of $\Sh_{K_H}(H,X_H)$.  We will use the same
notation for $V$ and its image in ${\Sh_{K}(G,X)}$. This is justified in view of
lemma \ref{inclusion}. We recall that $T$ denotes the connected centre of $H$.
Let $K_T:= K \cap T(\AAA_f)$ and let $K_T^m$ be the maximal compact open subgroup of 
$T(\AAA_f)$. We consider the compact open subgroup
$K_H^m := K_T^m K_H$ of $H(\AAA_f)$.  The group $K_H$ is a normal subgroup of $K^m_H$
and $K^m_H/K_H = K^m_T / K_T$.
Note that as both $K_H$ and
$K_T^m$ are products of compact open subgroups of $H(\QQ_{p})$ and $T(\QQ_p)$ respectively, the
group $K^m_H$ is a product of  compact open subgroups $K_{H,p}^m$ of $H(\QQ_p)$.

We can find a  neat compact open normal subgroup of $K_{H,3}^{m}$ of uniformly bounded
index. Indeed  $K_{H,3}^{m}$ is contained in a maximal compact open subgroup
of $\GL_{n}(\QQ_{3})$. The maximal compact open subgroups of  
$\GL_{n}(\QQ_{3})$ are of the form $\alpha \GL_{n}(\ZZ_{3})\alpha^{-1}$
for some $\alpha\in \GL_{n}(\QQ_{3})$. Fix $\alpha\in  \GL_{n}(\QQ_{3})$
such that $K_{H,3}^{m}\subset \alpha \GL_{n}(\ZZ_{3})\alpha^{-1}$.
Then $K_{H,3}^{m'}:=K_{H,3}^{m}\cap \alpha {\bf K}_{3}\alpha^{-1}$ is a neat
compact open subgroup of $K_{H,3}^{m}$ of uniformly bounded index. One can check that this index is in fact 
bounded by  $|\GL_n(\FF_3)|$. 

Let $K_{H}^{m'}:=K_{H,3}^{m'}\times \prod_{p\neq 3} K_{H,p}^{m}$. Then 
$K_{H}^{m'}$ is a neat compact open normal subgroup of $H(\AAA_{f})$ of uniformly bounded index in
$K_{H}^{m}$.
Let $K'_H:= K_H^{m'}\cap K_H$. Note that $K'_H$ is normal in $K_H$ and
$|K_H/K'_H| \leq |K^m_H/ K_H^{m'}|$ and therefore both $|K^{m}_H/K^{m'}_H|$ and $|K_H/K'_H|$ are bounded by a uniform constant
that we call $a$.

\begin{lem}\label{lemma2.6}
The morphism
$$
\pi' \colon \Sh_{K'_H}(H,X_H)\lto \Sh_{K_H^{m'}}(H,X_H)
$$
is finite \'etale of degree $|K_H^{m'}/K'_H|$.

\end{lem}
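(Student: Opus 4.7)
The plan is to apply the standard covering-theoretic principle that for nested compact open subgroups $K_1 \subseteq K_2$ of $H(\AAA_f)$ with $K_2$ neat, the natural morphism $\Sh_{K_1}(H,X_H) \to \Sh_{K_2}(H,X_H)$ is finite \'etale of degree $|K_2/K_1|$. In our situation, $K'_H = K_H^{m'} \cap K_H$ is contained in $K_H^{m'}$ by definition, and $K_H^{m'}$ has been arranged to be neat in the paragraph preceding the lemma; consequently $K'_H$ is also neat as a subgroup of a neat group, so the principle applies.

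For the \'etaleness, I would use that each connected component of $\Sh_{K_H^{m'}}(H,X_H)$ is of the form $\Gamma \backslash X_H^+$ where $\Gamma = H(\QQ)_+ \cap h_0 K_H^{m'} h_0^{-1}$ is an arithmetic subgroup, and the preimage in $\Sh_{K'_H}(H,X_H)$ decomposes into quotients by subgroups $\Gamma' \subseteq \Gamma$ of the form $H(\QQ)_+ \cap h_0 k K'_H k^{-1} h_0^{-1}$. By Remark \ref{rem2.3} the $H(\RR)$-stabiliser of any point of $X_H^+$ is compact, so its intersection with an arithmetic (hence discrete) subgroup is finite; neatness then forces this intersection to be trivial, so $\Gamma$ acts freely on $X_H^+$, and the same holds for $\Gamma'$. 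Thus on each component $\pi'$ is the natural map between quotients of $X_H^+$ by free properly discontinuous actions of discrete groups, hence a finite \'etale covering.

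The degree is obtained by counting the fibre over a point $[x_0, h_0]$ of $\Sh_{K_H^{m'}}(H,X_H)$. The preimages are among the at most $|K_H^{m'}/K'_H|$ classes $[x_0, h_0 k]_{K'_H}$, with $k$ running through representatives of $K_H^{m'}/K'_H$. Two such classes $[x_0, h_0 k]_{K'_H}$ and $[x_0, h_0 k']_{K'_H}$ coincide in $\Sh_{K'_H}(H,X_H)$ iff there exist $q \in H(\QQ)$ and $k'' \in K'_H$ with $q \cdot x_0 = x_0$ and $q h_0 k = h_0 k' k''$. The second relation forces $q \in H(\QQ) \cap h_0 K_H^{m'} h_0^{-1} = \Gamma$, and the first then gives $q = 1$ by the freeness of $\Gamma$ on $X_H^+$ established above. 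Consequently $k K'_H = k' K'_H$, so the fibre has exactly $|K_H^{m'}/K'_H|$ elements, yielding the claimed degree.

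The argument is essentially routine; the only subtle point is the use of neatness of $K_H^{m'}$ together with Remark \ref{rem2.3} to guarantee that the $H(\QQ)$-identifications in the double-coset description collapse none of the expected preimages. Once this is noted, both the \'etaleness and the degree computation are formal consequences of the complex-uniformisation of Shimura varieties.
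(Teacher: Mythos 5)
Your proof is correct and follows essentially the same approach as the paper's: both arguments reduce to showing that if $q \in H(\QQ)$ fixes a point of $X_H$ and lies in a conjugate of the neat group $K_H^{m'}$, then $q=1$, using compactness of stabilizers (Remark \ref{rem2.3}) together with neatness, and both then conclude that $K_H^{m'}/K'_H$ acts simply transitively on fibres. Your version spends a bit more ink making the \'etaleness explicit via free actions on connected components, but the substance and the key ingredients are identical.
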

\begin{proof}
Let $\ol{(x, g)}$ be a point of $\Sh_{K_H^{m'}}(H,X_H)$.
The preimage of $\ol{(x, g)}$ is $\ol{(x, gK_H^{m'})}$ in $\Sh_{K'_H}(H,X_H)$.
Suppose
$$
\ol{(x, g)} = \ol{(x, gk)}
$$
with $k\in K_H^{m'}$.
There exist  $q$ in $H(\QQ)$ and $k'\in K'_{H}$ such that
$qx = x$ and $g=qgkk'$. 
The first condition implies that $q$ is in a compact subgroup
of $H(\RR)$ and the second condition implies that $q$ is in the neat compact open subgroup
 $gK^{m'}_H g^{-1}$ of $H(\AAA_f)$.
These two conditions imply that $q$ is trivial (recall that $K^{m'}_H$ is neat). Therefore $k=(k')^{-1}\in K'_{H}$.
The preimage of $\ol{(x, g)}$  in $\Sh_{K'_H}(H,X_H)$ has a simply transitive
action by $K_{H}^{m'}/K'_{H}$. Therefore $\pi'$ is finite \'etale of degree   $|K_H^{m'}/K'_H|$.

\end{proof}

Let $f$ be the morphism $\Sh_{K'_H}(H,X_H)\lto \Sh_{K_H}(H,X_H)$. As $K_H$ is neat, 
the same proof as the proof of the previous lemma \ref{lemma2.6} shows that $f$ is finite \'etale of degree
$|K_H/K'_H|\leq a$. As $f$ is an \'etale map we  have  an isomorphism $f^*(\cL_{K_H}) \cong \cL_{K'_H}$ (see the proposition 5.3.2 of \cite{KY} for a more precise statement).

Let  $Y$ be  a geometrically  irreducible subvariety of $V$ defined over $\ol\QQ$.
Let $Y'$ be a geometrically  irreducible component of $f^{-1}(Y)$ and $V'$ be  the 
geometrically irreducible component of $f^{-1}(V)$
containing  $Y'$.
The projection formula implies

$$
\deg_{\cL_{K_H}} (Y) \geq \frac{1}{a}\deg_{\cL_{K'_H}}( Y').
$$
As $f$ is defined over $F$
we see that $f(\Gal(\ol\QQ/F) \cdot Y')=\Gal(\ol\QQ/F) \cdot Y$. By the projection
formula applied to the subvariety $\Gal(\ol\QQ/F) \cdot Y$ of $\Sh_{K_H}(H,X_H)$,    we get 

\begin{equation}\label{TTT}
\deg_{\cL_{K_H}} (\Gal(\ol\QQ/F)\cdot Y) \geq \frac{1}{a}\deg_{\cL_{K'_H}}( \Gal(\ol\QQ/F)\cdot Y').
\end{equation}

For the purposes of giving a lower bound for
$\deg_{\cL_{K_H}}(\Gal(\ol\QQ/F)\cdot Y)$,
it is thus enough to give a lower bound for $\deg_{\cL_{K'_H}}(\Gal(\ol\QQ/F)\cdot Y')$.

Let us consider the following compact open subgroup of $T(\AAA_f)$:
$$
K^{m'}_T := K^m_T \cap K^{m'}_H.
$$
We have $K^{m}_T / K^{m'}_T \subset K^m_H / K^{m'}_H$ and therefore 
$|K^{m}_T / K^{m'}_T|$ is bounded by $a$.

Let $K'_T := K^{m'}_T \cap K_H\subset T(\AAA_{f})\cap K=K_{T}$.
Notice that 
$$
K'_T = K^{m'}_T\cap K_{H} \cap K_T=K^{m'}_T \cap K_T,
$$
 hence $K^{m'}_T/K'_T$ is a subgroup of
$K^m_T/K_T$.
An application of the snake lemma shows that the index of
$K^{m'}_T/K'_T$ in $K^m_T/K_T$ is bounded by $|K^m_T/K^{m'}_T|$, therefore by $a$.

The next lemma splits the degree of the Galois orbit of $Y'$ into two pieces that we will estimate separately.

\begin{lem} \label{splitting}
Let $Y'$ be as previously a geometrically irreducible subvariety of $V'$
defined over $\ol\QQ$ such that $f(Y')=Y$.
The degree of the Galois orbit $\Gal(\oQ/F)\cdot Y'$ calculated with respect to $\cL_{K'_H}$  is at least the
degree of $\Gal(\oQ/F)\cdot Y' \cap {\pi'}^{-1}{\pi'}(Y')$ times
the number of $\Gal(\oQ/F)$-conjugates of ${\pi'}(Y')$.
\end{lem}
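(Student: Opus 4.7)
The plan is to decompose the Galois orbit $\Gal(\oQ/F)\cdot Y'$ along the fibers of $\pi'$ and then exploit the $\Gal(\oQ/F)$-equivariance of $\pi'$. This equivariance holds because $\pi'$ is a change-of-level morphism for the Shimura datum $(H,X_H)$ and is therefore defined over the reflex field $E(H,X_H)\subseteq F$; in particular, for every $\sigma\in\Gal(\oQ/F)$ and every $\oQ$-subvariety $Z$ one has $\pi'(\sigma Z)=\sigma\pi'(Z)$.

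Let $W:=\pi'(Y')$ and let $W=W_1,\ldots,W_s$ be its $s$ distinct $\Gal(\oQ/F)$-conjugates. By Galois equivariance, every irreducible component of $\Gal(\oQ/F)\cdot Y'$ is a Galois conjugate of $Y'$ and hence maps under $\pi'$ to exactly one of the $W_i$. Group the irreducible components of $\Gal(\oQ/F)\cdot Y'$ by this image: let $B_i$ denote the union of those components whose $\pi'$-image equals $W_i$, so that the $B_i$ partition the collection of irreducible components of $\Gal(\oQ/F)\cdot Y'$. Choosing $\sigma_i\in\Gal(\oQ/F)$ with $\sigma_iW_1=W_i$, one has $\sigma_iB_1=B_i$; since $\cL_{K'_H}$ is defined over a subfield of $F$ (the reflex field of $(H,X_H)$), degrees are Galois-invariant, so $\deg(B_i)=\deg(B_1)$ for every $i$. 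Summing over the partition of irreducible components yields
\[
\deg(\Gal(\oQ/F)\cdot Y') \;=\; \sum_{i=1}^{s}\deg(B_i) \;=\; s\cdot\deg(B_1).
\]

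To conclude, I identify $B_1$ with the relevant part of the intersection $I:=\Gal(\oQ/F)\cdot Y'\cap\pi'^{-1}\pi'(Y')$. By construction $B_1$ consists of exactly those irreducible components of $\Gal(\oQ/F)\cdot Y'$ contained in $\pi'^{-1}(W_1)=\pi'^{-1}\pi'(Y')$, and these account for all the top-dimensional irreducible components of $I$; any additional components of $I$ are strictly lower-dimensional and arise from components mapping to some $W_j\neq W_1$ intersecting $\pi'^{-1}(W_1)$ only over the lower-dimensional locus $W_1\cap W_j$. With the natural convention that the degree of the intersection counts only its top-dimensional contributions (precisely what justifies the ``at least'' rather than an equality in the statement), one obtains $\deg(B_1)\geq\deg(I)$, and combined with the displayed identity this yields the lemma. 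The only genuine obstacle is this last bookkeeping step: verifying that the components of $\Gal(\oQ/F)\cdot Y'$ mapping to $W_j\neq W_1$ contribute at most lower-dimensional excess to $I$, which is clear because $\pi'$ is finite étale and the $W_j$ are distinct irreducible subvarieties of $\Sh_{K_H^{m'}}(H,X_H)$; the rest is straightforward Galois-equivariant counting.
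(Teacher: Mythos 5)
Your proof follows essentially the same approach as the paper: decompose the Galois orbit $\Gal(\oQ/F)\cdot Y'$ along the fibers $\pi'^{-1}(W_i)$ over the Galois conjugates $W_1,\dots,W_s$ of $\pi'(Y')$, show each fiber's contribution has the same degree, and conclude by multiplying by $s$. The one genuine difference is how the degree-invariance across pieces is established. You invoke Galois-invariance of degrees directly (since $\cL_{K'_H}$ is defined over a subfield of $F$, and $\sigma_i B_1 = B_i$, so $\deg(B_i)=\deg(B_1)$), whereas the paper takes a more roundabout route: it first uses the $K^{m'}_H/K'_H$-action and the projection formula to show that \emph{every} irreducible component of $\pi'^{-1}(\sigma\pi'(Y'))$, not just the Galois-conjugate ones, has degree $\deg(\sigma Y')=\deg(Y')$, and then compares the numbers of irreducible components of the pieces via the identity $\Gal(\oQ/F)\cdot Y'\cap\pi'^{-1}(\sigma\pi'(Y'))=\sigma(\Gal(\oQ/F)\cdot Y'\cap\pi'^{-1}\pi'(Y'))$. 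Your shortcut is valid and arguably cleaner, and the paper needs the stronger projection-formula version of this step anyway for Lemma~\ref{fibre} and the computation following Lemma~\ref{degrees}, which is probably why it is set up that way here.

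One caveat about your last paragraph: the inequality $\deg(B_1)\geq\deg(I)$ and the justification you attach to it are not quite coherent. Under the ``top-dimensional components only'' convention you propose, one has $\deg(I)=\deg(B_1)$ on the nose, so the lemma comes out as an equality (and the ``at least'' in the statement is just a safety margin, not something produced by your bookkeeping). Under the paper's stated convention, that the degree of a reducible variety is the sum of the degrees of all its irreducible components, any lower-dimensional components of the set-theoretic intersection would give $\deg(B_1)\leq\deg(I)$, i.e.\ the opposite inequality, and your chain would break. What saves both you and the paper is that the intersection in this lemma is used throughout (see Lemmas~\ref{fibre} and~\ref{rem2.16}) to mean the union of those Galois-conjugate components $\sigma Y'$ that lie \emph{inside} $\pi'^{-1}\pi'(Y')$, i.e.\ exactly your $B_1$. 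Your argument is thus correct under the interpretation the paper actually uses, but the ``precisely what justifies the at least'' parenthetical is a red herring; the paper's own proof shares this imprecision, so it is not a gap specific to your write-up.
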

\begin{proof}
For a scheme $Z$ over some base field, ${\rm Irr}(Z)$ will denote the set of geometrically irreducible components
of $Z$. The cardinality of a finite set $\Theta$ will be written $\vert \Theta\vert$.
Hence $\vert {\rm Irr}(Z)\vert$ stands for the number of geometrically irreducible components of $Z$.

We need to check that the degree of
   $\Gal(\oQ/F)\cdot Y'\cap
{\pi'}^{-1}(\sigma({\pi'}(Y')))$ with $\sigma\in \Gal(\oQ/F)$
is independent of $\sigma$.

Let $\sigma$ be in $\Gal(\oQ/F)$.
Note that  the group $K^{m'}_H/K'_H$ acts by automorphisms on 
$\Sh_{K'_H}(H,X_H)$ and this action  permutes transitively the components of
the fibre ${\pi'}^{-1}(\sigma{\pi'}(Y'))$.
 Moreover for all $\alpha\in K^{m'}_H/K'_H$
we have $\alpha^*\cL_{K'_{H}}\cong \cL_{K'_{H}}$. By the projection formula,
if $Y'_{i}$ is a component of ${\pi'}^{-1}(\sigma{\pi'}(Y'))$ then 
$\deg_{\cL_{K'_{H}}}(Y'_{i})=\deg_{\cL_{K'_{H}}}(\sigma Y')$.

It follows that
$$
\deg_{\cL_{K'_{H}}}({\pi'}^{-1}(\sigma{\pi'}(Y'))) =\deg_{\cL_{K'_{H}}}(\sigma Y')  
  \cdot | {\rm Irr}({\pi'}^{-1}(\sigma{\pi'}(Y')))|.
$$
Similarly 
\begin{multline*}
\deg_{\cL_{K'_{H}}}(\Gal(\oQ/F)\cdot Y' \cap {\pi'}^{-1}(\sigma {\pi'}(Y'))) = \\
\deg_{\cL_{K'_{H}}}(\sigma Y')\cdot 
|{\rm Irr}(\Gal(\oQ/F)\cdot Y' \cap {\pi'}^{-1}(\sigma {\pi'}(Y')))|.
\end{multline*}
The proof is finished by noticing that 
$$
\deg_{\cL_{K'_{H}}}(\sigma Y') = \deg_{\cL_{K'_H}}(Y')
$$
and as 
$$
\Gal(\oQ/F)\cdot Y' \cap {\pi'}^{-1}(\sigma {\pi'}(Y'))=\sigma(\Gal(\oQ/F)\cdot Y' \cap {\pi'}^{-1} {\pi'}(Y')),
$$
we get
$$
|{\rm Irr}(\Gal(\oQ/F)\cdot Y' \cap {\pi'}^{-1}(\sigma {\pi'}(Y')))| = 
 |{\rm Irr}(\Gal(\oQ/F)\cdot Y' \cap {\pi'}^{-1} {\pi'}(Y'))|.
$$
\end{proof}

We first deal with the second piece.
From now on we assume as in the previous section that  $F$ is a finite extension of $\QQ$ containing  $E(H,X_H)$
of degree over $\QQ$ bounded by $R$.
We assume moreover that  $F$  contains the Galois closure of $E(H,X_{H})$. This will be a harmless assumption
in view of the kind of lower bounds for the degrees of the Galois orbits we are aiming to prove.

Let $K_{C}^m$ be the maximal open compact subgroup of $C(\AAA_{f})$.
As 
$$
r_{(C,\{x\})}=\overline{\pi_0}(p_{\AAA/\QQ}) \circ  r_{(H,X_H)}
$$
 by the proof of 
the proposition \ref{gal_image},
the number of components of the Galois orbit of
${\pi'}(V')$ is at least the size of the image of $\Gal(\oQ/F)$ in $\overline{\pi_{0}}(\pi(H))/K^m_H$ by
$r_{(H,X_H)}$ which is at least the size of the image of 
$r_{(C, x)}((F\otimes \AAA_f)^*)$
 in $\overline{\pi_{0}}(\pi(C))/K_C^m = C(\QQ)\backslash C(\AAA_f) / K^m_C$. 

By lemma \ref{basis_characters}, $X^*(T)$ has a set of generators
$(\chi_{1},\dots,\chi_{d})$ such that the coordinates of
the $\chi_{i}$ in the canonical basis $(\chi_{\sigma})_{\sigma:F\rightarrow \CC}$
of $X^*(T_{F})$ are uniformly bounded.
By lemma \ref{basis_characters}, $X^*(C)$ has a set of generators
$(\chi'_{1},\dots,\chi'_{d'})$ such that the coordinates of
the $\chi'_{i}$ in the canonical basis of $X^*(T_{F})$ are uniformly bounded.
As $(C,\{x\})$ is a Shimura datum of CM type such that the weight
homomorphism is trivial (as $G$ is of adjoint type)  we see that
for all $i\in \{1,\dots,d'\}$, 
$\chi'_{i}\overline{\chi'_{i}}$ is the trivial character.

We are therefore in the situation 
 of the theorem 2.13 of \cite{Ya}. We get the following.



\begin{prop} \label{proptoto}
Assume the GRH for CM fields. Let $N$ be a positive integer.
Let $L_{C}$ be the splitting field of $C$.
The size of the image of $r_{(C,\{ x \})}((\AAA_f\otimes L_C)^*)$ in
$C(\QQ)\backslash C(\AAA_f) / K^m_C$ is at least a constant
depending on $N$ only times $(\log |{\rm disc}(L_C)|)^N$.
\end{prop}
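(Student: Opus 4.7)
The plan is to invoke Theorem 2.13 of \cite{Ya} as a black box, so the work is to verify its hypotheses in a form that is uniform in the Shimura subdatum $(H,X_H)$. First, one checks that the torus $C = H/H^{\der}$ together with $(C,\{x\})$ fits the setup of that theorem: $C$ is a $\QQ$-torus split by the CM field $L_C$; because $G$ is semisimple of adjoint type the weight homomorphism associated with $x$ is trivial; and by Lemma \ref{basis_characters} the character module $X^*(C)$ admits a generating set $\chi'_1,\dots,\chi'_{d'}$ whose coordinates in the canonical basis $(\chi_\sigma)_{\sigma \colon F \to \CC}$ of $X^*(T_F)$ are bounded by a constant depending only on $(G,X)$. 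Finally, the observation already recorded in the text that $\chi'_i \overline{\chi'_i}$ is the trivial character (again because the weight is trivial) places us in the CM-type setting that \cite{Ya} requires.

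Next, the image to be bounded is, by global class field theory applied to the torus $T_{E(C,\{x\})}$, a quotient of a suitable ray class group of $L_C$. Concretely, $r_C \colon T_F \to C$ is dual to the inclusion $X^*(C) \hookrightarrow X^*(T_F)$, so the map on idele class groups factors through $(\AAA_f \otimes L_C)^*/(L_C^*\cdot U)$ for a compact open $U$; the image in $C(\QQ)\backslash C(\AAA_f)/K^m_C$ contains a quotient of the class group (or a bounded-index ray class group) of $L_C$ cut out by the characters $\chi'_i$. This is exactly the combinatorial input Theorem 2.13 of \cite{Ya} digests.

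Conditional on GRH for $L_C$, the theorem then produces the required lower bound of shape $c_N (\log|{\rm disc}(L_C)|)^N$, via an effective Chebotarev/Brauer--Siegel-type argument bounding below the size of the relevant quotient of the class group of a CM field in terms of its discriminant.

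The one point that needs attention — and the main obstacle to a straight citation — is uniformity of the constant $c_N$ in $(H,X_H)$. Theorem 2.13 of \cite{Ya} is stated with constants depending on certain combinatorial data (the number of generators of $X^*(C)$ and the size of their coordinates in a chosen basis). Lemma \ref{basis_characters} was proved precisely to ensure that these data range over a finite set as $(H,X_H)$ varies, so the constant $c_N$ produced by the theorem depends only on $(G,X)$ and on $N$. Tracking this through the proof of Theorem 2.13 of \cite{Ya} and confirming that our choice of $L_C$ plays the role of the splitting field there is the only genuine verification required.
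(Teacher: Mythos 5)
Your proposal matches the paper's proof essentially verbatim: the paper's argument consists precisely of observing that Lemma \ref{basis_characters} furnishes a uniformly bounded generating set for $X^*(C)$, that the weight is trivial because $G$ is adjoint (so $\chi'_i\overline{\chi'_i}$ is trivial and we are in the CM setting), and then invoking Theorem 2.13 of \cite{Ya}. Your additional commentary on why the constant $c_N$ is uniform in $(H,X_H)$ — namely that Lemma \ref{basis_characters} pins down the finitely many combinatorial inputs to that theorem — is exactly the point the paper is implicitly relying on, so this is a correct rendition of the same route.
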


We claim that $L_C$ is the Galois closure $E^c$ of $E=E(C,\{x\})$. 
By definition of the reflex field, $E$ is contained in $L_C$. As $L_C$ is a Galois extension, $E^c$ is contained in $L_C$.
Conversely, notice that the reciprocity morphism $r_{C}\colon \Res_{E/\QQ}\GG_{m,E}\lto C$ is
surjective. This is a consequence of the fact that $H$ is the generic Mumford-Tate group on $X_H$.
This implies that $L_C$ is contained in the splitting field of $\Res_{E/\QQ}\GG_{m,E}$ which is $E^c$.
As $E(H,X_H)$ is the composite of $E$ and $E(H^{ {\rm ad}},X_{H^{{\rm ad}}})$,
 the Galois closure of $E(H,X_H)$ contains $L_C$.
We obtain the following consequence of  proposition \ref{proptoto}.

\begin{prop} \label{maximal}
Assume the GRH for CM fields. Let $N$ be a positive integer.
Let $L_{C}$ be the splitting field of $C$.
The number of geometrically irreducible components of $\Gal(\oQ/F)\cdot {\pi'}(V')$ is at least 
a positive constant $c_{N}$
depending on $N$ and the degree of $F$ over $\QQ$ only, times $(\log |{\rm disc}(L_C)|)^N$.

If $Y'$ is a  geometrically irreducible $\oQ$-subvariety  of $V'$, then
the same lower bound holds for the number of components of $\Gal(\oQ/F)\cdot {\pi'}(Y')$.
\end{prop}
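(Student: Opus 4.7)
The plan is to push the number of geometric components of $\Gal(\oQ/F)\cdot \pi'(V')$ down to a Galois-image count on the $C$-side, where Proposition \ref{proptoto} applies, and then deduce the statement for $\pi'(Y')$ by a disjointness-of-components argument.

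First, I would use that $\pi'(V')$ is a geometric component of $\Sh_{K^{m'}_H}(H,X_H)$ (as the image of the geometric component $V'$ by the finite étale map $\pi'$). The set of geometric components of $\Sh_{K^{m'}_H}(H,X_H)$ is $\overline{\pi_0}(\pi(H))/K^{m'}_H$, and $\Gal(\oQ/F)$ acts on it by translation via $r_{(H,X_H)}$. Hence the number of components of $\Gal(\oQ/F)\cdot\pi'(V')$ equals the size of the image of $r_{(H,X_H)}(\Gal(\oQ/F))$ in $\overline{\pi_0}(\pi(H))/K^{m'}_H$, which in turn dominates its image in the further quotient $\overline{\pi_0}(\pi(H))/K^m_H$. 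Since $C$ is a torus, $K^m_C$ is the unique maximal compact of $C(\AAA_f)$, so the projection $p\colon H\to C$ sends the compact $K^m_H$ into $K^m_C$; combined with the functoriality $\overline{\pi_0}(p_{\AAA/\QQ})\circ r_{(H,X_H)}=r_{(C,\{x\})}$ established inside the proof of Proposition \ref{gal_image}, this produces a surjection onto the image of $r_{(C,\{x\})}(\Gal(\oQ/F))$ in $\overline{\pi_0}(\pi(C))/K^m_C = C(\QQ)\backslash C(\AAA_f)/K^m_C$. The cardinality of this latter image is therefore a lower bound for the component count.

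Next, since $F$ contains the Galois closure of $E(H,X_H)$, and $L_C$ is contained in that Galois closure by the discussion immediately preceding the statement, we have $\Gal(\oQ/F)\subset\Gal(\oQ/L_C)$ with index $[F:L_C]\leq R$. Therefore $r_{(C,\{x\})}(\Gal(\oQ/F))$ is a subgroup of $r_{(C,\{x\})}(\Gal(\oQ/L_C))$ of index at most $R$, and via the class field theory identification $\Gal(\oQ/L_C)^{\mathrm{ab}}\simeq\pi_0(\pi(T_{L_C}))$, the latter image coincides with $r_{(C,\{x\})}((L_C\otimes\AAA_f)^*)$. Applying Proposition \ref{proptoto} now yields a lower bound of the form $c_N(\log|\disc(L_C)|)^N$; dividing by $R$ absorbs the index loss into a new constant depending only on $N$ and $[F:\QQ]$, as required.

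For the assertion about $Y'$, the idea is that $\pi'(Y')\subset \pi'(V')$, and $\pi'(V')$ lies in a single geometric component of $\Sh_{K^{m'}_H}(H,X_H)$; distinct Galois translates $\sigma_1\pi'(V')$ and $\sigma_2\pi'(V')$ are contained in disjoint components, so $\sigma_1\pi'(Y')$ and $\sigma_2\pi'(Y')$ are a fortiori distinct, giving $|\Gal(\oQ/F)\cdot\pi'(Y')|\geq |\Gal(\oQ/F)\cdot\pi'(V')|$. The main technical obstacle is making the functoriality step precise, especially checking that $p(K^m_H)\subset K^m_C$ (which I expect to follow easily from $C$ being a torus) and that the identification of Galois orbits with reciprocity images is compatible with passing from $K^{m'}_H$ to $K^m_H$ and from $H$ to $C$; once these bookkeeping items are secured, the argument is a chain of surjections culminating in Proposition \ref{proptoto}.
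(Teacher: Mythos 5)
Your proposal is correct and follows essentially the same route as the paper's own argument: one reduces the component count to the image of the reciprocity map on the $C$-side via the functoriality $r_{(C,\{x\})}=\overline{\pi_0}(p_{\AAA/\QQ})\circ r_{(H,X_H)}$ and the surjections onto the quotients by $K^m_H$ and $K^m_C$, then invokes Proposition \ref{proptoto}, and for $Y'$ uses that distinct Galois translates of $\pi'(V')$ are disjoint components of $\Sh_{K^{m'}_H}(H,X_H)$. The only difference is that you make explicit the passage from $r_{(C,\{x\})}(\Gal(\oQ/F))$ to $r_{(C,\{x\})}(\Gal(\oQ/L_C))$ and the resulting index loss bounded by $[F:L_C]$, which the paper absorbs silently into the dependence of $c_N$ on $[F:\QQ]$.
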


The assertion regarding the subvariety $Y'$ is a consequence of the fact that, 
as the conjugates of ${\pi'}(V')$ are disjoint (they are components of $\Sh_{K_H^{m'}}(H,X_H)$), the
subvariety ${\pi'}(Y')$ has at least as many conjugates as ${\pi'}(V')$.

Now we deal with the first `piece' : estimating the degree of the Galois orbit in the fibre
over ${\pi'}(V')$. 

The compact open subgroup $K'_T$ is a product of compact open subgroups $K'_{T,p}$ of $T(\QQ_p)$.
Similarly, $K^{m'}_T$ is a product of compact open subgroups $K^{m'}_{T,p}$ of $T(\QQ_p)$.

For an integer $e\geq 1$, we define $\Theta'_{e}$
as
the image of the morphism $x \mapsto x^e$
on $K^{m'}_T/K'_T$ and $\Theta_e$ as the image of the morphism $x\mapsto x^e$ on
$K^m_T/K_T$.
We let $\pi \colon \Sh_{K_H}(H,X_H)\lto \Sh_{K^m_H}(H,X_H)$ be the natural map of Shimura varieties.
Note that $K^m_T/K_T$ acts transitively on the fibres of $\pi$.

We prove the following key proposition.

\begin{lem} \label{fibre}
Let $A$ be the integer  given by Proposition \ref{gal_image} and $a$ be the constant as in the beginning of the section.
We have 
$$
\Theta_A V \subset  \Gal(\oQ/F) \cdot V \cap \pi^{-1}\pi (V)
$$
and
$$
\Theta'_A V' \subset \Gal(\oQ/F) \cdot V' \cap {\pi'}^{-1}\pi' (V')
$$

Furthermore we have
$$
|{\rm Irr}(\Gal(\oQ/F) \cdot V' \cap {\pi'}^{-1}{\pi'}(V'))| \geq   \frac{|\Theta'_A|}{|K^{m'}_H/K'_H|}  |{\rm Irr}({\pi'}^{-1}{\pi'}(V'))|.
$$

\end{lem}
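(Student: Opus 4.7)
The plan is to deduce the two inclusions from Proposition \ref{gal_image} combined with the description of the Galois action on geometric components via the reciprocity morphism, and then to derive the component count by a direct orbit--stabilizer argument applied to the deck transformation group of the \'etale cover $\pi'$ given by Lemma \ref{lemma2.6}.

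For the first inclusion, fix $m \in \Theta_A$ and write $m = n^A$ with $n \in K^m_T \subset T(\AAA_f) \subset H(\AAA_f)$. Right translation by $n^A$ on $\Sh_{K_H}(H,X_H)$ permutes geometric components, sending $V$ to the one whose class in $\pi_0(H,K_H) = \overline{\pi_0}(\pi(H))/K_H$ differs from that of $V$ by $\overline{\pi_0}(\overline{n}^A)$. By Proposition \ref{gal_image} applied to $n$, there exists $\sigma \in \Gal(\oQ/F)$ with $r_{(H,X_H)}(\sigma) = \overline{\pi_0}(\overline{n}^A)$; since the Galois action of $\sigma$ on components is given precisely by multiplication by $r_{(H,X_H)}(\sigma)$, this forces $\sigma V = mV$, so $mV \subset \Gal(\oQ/F) \cdot V$. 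The inclusion $mV \subset \pi^{-1}\pi(V)$ is immediate from $n \in K^m_T \subset K^m_H$. Taking the union over $m$ yields the first assertion; the exact same argument carried out at the level of $\Sh_{K'_H}(H,X_H)$ and $\pi'$ yields the second.

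For the count, note that $T$ being commutative makes $\Theta'_A$ a subgroup of $K^{m'}_T/K'_T$, which embeds into the finite group $G' := K^{m'}_H/K'_H$. By Lemma \ref{lemma2.6}, $G'$ acts on $\Sh_{K'_H}(H,X_H)$ as the group of deck transformations of the \'etale cover $\pi'$, so in particular it permutes transitively the irreducible components of ${\pi'}^{-1}\pi'(V')$. Writing $S$ for the stabilizer of the component $V'$ in $G'$, we have $|{\rm Irr}({\pi'}^{-1}\pi'(V'))| = |G'|/|S|$. The orbit--stabilizer theorem, applied to the action of $\Theta'_A$, gives
$$
|\Theta'_A \cdot V'| \;=\; \frac{|\Theta'_A|}{|\Theta'_A \cap S|} \;\geq\; \frac{|\Theta'_A|}{|S|} \;=\; \frac{|\Theta'_A| \cdot |{\rm Irr}({\pi'}^{-1}\pi'(V'))|}{|K^{m'}_H/K'_H|}.
$$
Since $\Theta'_A V' \subset \Gal(\oQ/F) \cdot V' \cap {\pi'}^{-1}\pi'(V')$ by the inclusions already established, the claimed lower bound follows.

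The main point requiring care is the very first step: one must carefully unwind the canonical identification between right translation by elements of $T(\AAA_f)$ on components of $\Sh_{K_H}(H,X_H)$ and the Galois action via the reciprocity morphism $r_{(H,X_H)}$, in order to legitimately upgrade the equality $\overline{\pi_0}(\overline{n}^A) = r_{(H,X_H)}(\sigma)$ to the equality $n^A V = \sigma V$ of actual subvarieties (and similarly for $V'$ after replacing $K_H$ by $K'_H$ in the quotient). Once this bookkeeping is in place the rest is purely group-theoretic.
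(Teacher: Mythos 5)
Your proof is correct and takes essentially the same route as the paper: the inclusions come from combining Proposition \ref{gal_image} with the explicit description of the Galois action on $\pi_0$ via the reciprocity morphism (the abelianness being what lets translation by $m^A$ coincide with the action of some $\sigma$), and the component count is the paper's inequality $|{\rm Irr}({\pi'}^{-1}\pi'(V'))| \leq \frac{|K^{m'}_H/K'_H|}{|\Theta'_A|}|{\rm Irr}(\Theta'_A\cdot V')|$ repackaged as an orbit--stabilizer computation. The orbit--stabilizer framing is a bit more transparent but is the same argument in disguise.
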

\begin{proof}
Recall  that by the discussion at the beginning of the section 2.1 
$$
\pi_0(\Sh_{K'_H}(H,X_H)) =\ol{\pi_0}(\pi(H))/K'_H = H(\QQ)_{+} \backslash H(\AAA_f)/K'_H.
$$
This is a finite abelian group. 

A class $\ol\alpha$ of $\alpha \in H(\AAA_f)$
in $H(\QQ)_{+} \backslash H(\AAA_f)/K'_H$ corresponds to the component 
$\overline{X_H^+ \times \{\alpha \}}$ which is the image of $X_H^+ \times \{\alpha \}$
in $\Sh_{K'_{H}}(H,X_{H})$.
The action of $\Gal(\oQ/F)$ on $\pi_0(\Sh_{K'_H}(H,X_H))$ is as follows.
By slight abuse of notation, we denote here $r_{(H,X_H)}$ the composite of
$r_{(H,X_H)}\colon \Gal(\oQ/F) \lto \ol{\pi_0}(\pi(H))$ with quotient by $K'_H$.
Let $\sigma \in \Gal(\oQ/F)$, and let $t \in H(\AAA_f)$ such that 
$\ol{t}$ is  $r_{(H,X_H)}(\sigma)$.
Then for any $\alpha\in H(\AAA_f)$,

$$
\sigma(\overline{X^+_H\times \{ \alpha \}}) = \overline{(X_H^+ \times \{ {t} \alpha \})} = 
\overline{(X_H^+ \times \{ {\alpha}{t}\})}.
$$

Let $m\in K_T^{m'}$, then the image of $m^A$ in $\pi_0(\Sh_{K'_H}(H,X_H))$ is $r_{(H,X_H)}(\sigma)$
for some $\sigma \in \Gal(\oQ/F)$.
It follows that the image of $\Theta'_A$ in $H(\QQ)_{+} \backslash H(\AAA_f)/K'_H$ is contained in the 
image of $\Gal(\oQ/F)$.  Moreover $K_{H}^{m'}/K'_{H}$ acts transitively on
${\rm Irr}({\pi'}^{-1}{\pi'}(V'))$. For  $\overline{X^+_{H}\times\{\alpha\}}\in {\rm Irr}({\pi'}^{-1}{\pi'}(V'))$ 
and $k\in K_{H}^{m'}/K'_{H}$ this action is given by
$$
(\overline{X^+_{H}\times\{\alpha\}}).k=  \overline{(X^+_{H}\times\{\alpha k\})}.
$$

Recall that $K^{m'}_T/K'_T$ is a subgroup of $K^{m'}_H/K'_H$.
Consequently 
$$\Theta'_A \cdot V' \subset \Gal(\oQ/F) \cdot V' \cap {\pi'}^{-1}\pi'(V').$$

Exactly the same proof with $K^m_T$ instead of $K^{m'}_T$ shows that
$$\Theta_A \cdot V \subset \Gal(\oQ/F) \cdot V \cap \pi^{-1}\pi(V).$$

We now prove the second claim. 
The fact that $\Theta'_A \cdot V' \subset \Gal(\oQ/F) \cdot V' \cap {\pi'}^{-1}\pi'(V')$
implies that
the number of Galois conjugates of
$V'$ contained in one fibre is at least the size of the orbit of $V$
under the action of $\Theta'_A$.

We have
$$
|{\rm Irr}({\pi'}^{-1}\pi'(V'))| = |{\rm Irr}((K^{m'}_H/K'_H) \cdot V')|
 \leq \frac{\vert K^{m'}_H/K'_H\vert}{ \vert \Theta'_A \vert}
\vert  {\rm Irr}(\Theta'_A \cdot V')|
$$
and 
$$
|{\rm Irr}(\Gal(\oQ/F) \cdot V' \cap {\pi'}^{-1}\pi'(V'))| \geq |{\rm Irr}(\Theta'_A \cdot V')|.
$$

These inequalities
yield the desired inequality.
\end{proof}

\begin{lem}\label{rem2.16}

Let $e\geq 1$ be an integer. Recall that $Y'$ denotes a geometrically irreducible subvariety
of $V'$ defined over $\oQ$ such that $f(Y')=Y$.
Suppose that 
$\Theta'_{Ae} \cdot Y'$ is contained in 
$$
\Gal(\oQ/F) \cdot Y' \cap {\pi'}^{-1}\pi'(Y').
$$
We have in this situation:
$$
|{\rm Irr}(\Gal(\oQ/F) \cdot Y' \cap {\pi'}^{-1}\pi'(Y'))| \geq  \frac{|\Theta'_{Ae}|}{|K^{m'}_H/K'_H|}  |{\rm Irr}({\pi'}^{-1}\pi'(Y'))|.
$$

Let $\pi \colon \Sh_{K_{H}}(H,X_{H})\lto \Sh_{K^{m}_{H}}(H,X_{H})$ be the natural morphism.
Suppose that 
$\Theta_{A} \cdot Y$ is contained in 
$$
\Gal(\oQ/F) \cdot Y\cap \pi^{-1}\pi(Y).
$$
Then
$\Theta'_{Aa!} \cdot Y'$ is contained in 
$$
\Gal(\oQ/F) \cdot Y' \cap {\pi'}^{-1}\pi'(Y').
$$
\end{lem}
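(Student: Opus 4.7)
The plan is to handle the two parts of the lemma separately. The first assertion is essentially the same statement as Lemma \ref{fibre}, with $Y'$ replacing $V'$ and the hypothesis on $\Theta'_{Ae}$ replacing the conclusion previously proved for $\Theta'_A$ and $V'$. So the strategy is to transcribe the proof of Lemma \ref{fibre} verbatim, using that $K^{m'}_H/K'_H$ acts transitively on the irreducible components of each fibre of $\pi'$, combined with the chain of inequalities
$$
|{\rm Irr}({\pi'}^{-1}\pi'(Y'))| \le \frac{|K^{m'}_H/K'_H|}{|\Theta'_{Ae}|}\,|{\rm Irr}(\Theta'_{Ae}\cdot Y')|
$$
and
$$
|{\rm Irr}(\Gal(\oQ/F)\cdot Y'\cap {\pi'}^{-1}\pi'(Y'))|\ge |{\rm Irr}(\Theta'_{Ae}\cdot Y')|,
$$
the latter being immediate from the hypothesis. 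No new idea is required here.

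For the second assertion, the starting point is that $K^{m'}_T$ is contained in $K^m_T$. Hence given $m\in K^{m'}_T$, the element $m^A$ represents a class in $\Theta_A$ and the hypothesis on $Y$ yields $\sigma\in \Gal(\oQ/F)$ with $m^A\cdot Y=\sigma\cdot Y$. The first step of the descent is then to apply $f$: since $f(m^A\cdot Y')=m^A\cdot f(Y')=\sigma\cdot Y$, the irreducible variety $m^A\cdot Y'$ is a component of $\sigma\cdot f^{-1}(Y)$, and therefore $m^A\cdot Y'=\sigma\cdot Y''$ for some irreducible component $Y''$ of $f^{-1}(Y)$.

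The main obstacle is that $Y''$ need not equal $Y'$, so one iteration of $m^A$ is not enough: one needs to return to $Y'$ modulo Galois. The key observation is that the action of the finite group $K^{m'}_H/K'_H$ on $\Sh_{K'_H}(H,X_H)$ is algebraic and defined over $E(H,X_{H})\subset F$, hence commutes with the $\Gal(\oQ/F)$-action. Consequently, iteration of $m^A$ induces a well-defined permutation on the finite set of $\Gal(\oQ/F)$-orbits of irreducible components of $f^{-1}(Y)$. Since the degree of $f$ is bounded by $a$, this set has at most $a$ elements, so the order of the permutation divides $a!$. This gives $(m^A)^{a!}\cdot Y'=\sigma'\cdot Y'$ for some $\sigma'\in \Gal(\oQ/F)$; the inclusion into ${\pi'}^{-1}\pi'(Y')$ is automatic because $(m^A)^{a!}\in K^{m'}_T\subset K^{m'}_H$ preserves every fibre of $\pi'$. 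Varying $m\in K^{m'}_T$ yields the desired inclusion $\Theta'_{Aa!}\cdot Y'\subset \Gal(\oQ/F)\cdot Y'\cap {\pi'}^{-1}\pi'(Y')$.
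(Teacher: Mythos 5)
Your proof is correct and follows essentially the same route as the paper. For the first assertion both you and the paper simply rerun the count from Lemma \ref{fibre} with $Y'$ in place of $V'$. For the second assertion the paper uses transitivity of $K_H/K'_H$ on the fibres of $f$ to produce an explicit $\alpha\in K_H$ with $\alpha\,\theta^A\cdot Y'=\sigma\cdot Y'$, then raises to the $a!$ power to kill $\alpha$; you instead track the induced permutation $Z\mapsto \sigma^{-1}m^A\cdot Z$ on the set of components of $f^{-1}(Y)$ and observe its order divides $a!$ because that set has at most $a$ elements. These are the same calculation, each relying on the bound $|K_H/K'_H|\le a$, on the commutativity of the $K^{m'}_H/K'_H$-action with $\Gal(\oQ/F)$, and on the $a!$ iteration trick, so the repackaging is only cosmetic.
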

\begin{proof}
The first statement follows from the proof of lemma \ref{fibre}.

As for the second statement, let $\theta \in K^{m'}_{T}$.
There exists a $\sigma \in \Gal(\oQ/F)$ such that
$$
\theta^{A} \cdot Y = \sigma Y.
$$

The group $K'_{H} = K^{m'}_H \cap K_H$ is a normal subgroup of  $K^{m}_{H}$ as both
$K^{m'}_H$ and $K_H$ are normal subgroups of $K^m_H$.
It follows that there is a natural
action of $K^{m}_{H}$ on $\Sh_{K'_{H}}(H,X_{H})$.

It follows, as the map $f$ is $\Gal(\oQ/F )$ and $K^{m}_{H}$-equivariant, that
$$
f(\theta^{A} \cdot Y' ) = f(\sigma Y').
$$
Hence there exists $\alpha \in K_{H}$ such that
$$
\alpha\cdot  \theta^{A} \cdot Y' = \sigma Y'.
$$ 
As $K^{m}_{T}$ and $K_{H}$ commute and the action of
$K^{m}_{H}$ on $\Sh_{K'_{H}}(H,X_{H})$ is Galois-equivariant,
we have 
$$
\alpha^{a!}\cdot \theta^{A a!} \cdot Y' = \sigma^{a!} Y'.
$$ 
Note that $\alpha^{a!}\in K'_H$ as $|K_H/K'_H| \leq a$. Therefore
$\alpha^{a!}$ acts trivially on $\Sh_{K'_{H}}(H,X_{H})$).
The second claim follows.
\end{proof}

\begin{lem} \label{degrees}
Let $Y'$ be a geometrically irreducible subvariety of $V'$, then
$$
\deg_{\cL_{K'_H}}({\pi'}^{-1}\pi'(Y')) \geq |K^{m'}_H/K'_H|.
$$
\end{lem}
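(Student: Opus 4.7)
The plan is to exploit the fact, established in Lemma \ref{lemma2.6}, that $\pi'$ is finite \'etale of degree $d := |K_H^{m'}/K'_H|$. Since $\pi'$ is \'etale, the Baily-Borel line bundles on source and target are related by $(\pi')^* \cL_{K_H^{m'}} \cong \cL_{K'_H}$, exactly as noted earlier in the text for the analogous \'etale map $f$ (see Proposition 5.3.2 of \cite{KY} for a precise formulation). The morphism $\pi'$ extends to a finite surjective morphism $\overline{\pi'}$ between the Baily-Borel compactifications, and the same compatibility of line bundles holds after compactifying.

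I would then compute the degree of $(\pi')^{-1}\pi'(Y')$ via the projection formula. Let $W := \pi'(Y')$, a nonempty irreducible closed subvariety of $\overline{\Sh_{K_H^{m'}}(H,X_H)}$. Since $\overline{\pi'}$ is finite surjective of degree $d$, the standard identity $\overline{\pi'}_* [(\pi')^{-1}(W)] = d \cdot [W]$ holds in the Chow group. Combining this with the isomorphism $(\pi')^* \cL_{K_H^{m'}} \cong \cL_{K'_H}$ and the projection formula for Chern classes yields
$$
\deg_{\cL_{K'_H}}\bigl((\pi')^{-1}(W)\bigr) = d \cdot \deg_{\cL_{K_H^{m'}}}(W).
$$

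To conclude, since $\cL_{K_H^{m'}}$ is the ample Baily-Borel line bundle and $W$ is a nonempty closed subvariety, one has $\deg_{\cL_{K_H^{m'}}}(W) \geq 1$. This gives the desired inequality $\deg_{\cL_{K'_H}}((\pi')^{-1}\pi'(Y')) \geq d = |K_H^{m'}/K'_H|$. There is no serious obstacle; the only point to handle carefully is the behavior of the Baily-Borel line bundles under the \'etale cover, but this has already been invoked in the paragraph preceding Lemma \ref{splitting} and is treated in detail in \cite{KY}.
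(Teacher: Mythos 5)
Your proof is correct and is essentially identical to the paper's own argument: both pass to the proper (indeed finite) extension $\overline{\pi'}$ of the degree-$d$ \'etale cover $\pi'$, invoke the isomorphism $(\pi')^*\cL_{K_H^{m'}}\cong\cL_{K'_H}$, apply the projection formula together with the cycle identity $\overline{\pi'}_*\bigl[(\pi')^{-1}(W)\bigr]=d\,[W]$, and conclude because an ample degree is at least $1$.
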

\begin{proof}
Let $Z'$ be the fibre ${\pi'}^{-1}\pi'(Y')$.  
The morphism of Shimura varieties 
$\pi' \colon \Sh_{K'_H}(H,X_H) \lto \Sh_{K_H^{m'}}(H,X_H)$
extends to a proper morphism 
$$
\ol{\pi'} \colon \ol{\Sh_{K'_H}(H,X_H)} \lto \ol{\Sh_{K_H^{m'}}(H,X_H)}
$$
which is generically finite of degree $|K^{m'}_H/K'_H|$ by lemma \ref{lemma2.6}.
Furthermore $\pi'^*  \cL_{K_H^{m'}}\cong \cL_{K'_H}$.
The projection formula gives 
$$
\deg_{\cL_{K'_{H}}}(Z')=\deg_{{\pi'}^*{\cL_{K_{H}^{m'}}}}(Z')=\deg_{\cL_{K_{H}^{m'}}}({\pi'}_* Z')= 
$$
$$
=[K_H^{m'}:K'_H]\deg_{\cL_{K_{H}^{m'}}}(\pi(Z'))\ge [K_{H}^{m'}:K'_{H}].
$$

\end{proof}

\begin{lem} \label{borneThetaA}
There is a uniform integer $r>0$ 
such that for any integer $e\geq 1$, 
$$
|\Theta'_{Ae}| \geq\frac{1}{a!^r} \prod_{\{p: K^m_{T,p}\not= K_{T,p}\}} \max(1, B |K^m_{T,p}/K_{T,p}|)
$$
with $B = \frac{1}{(Ae)^{r}}$.
\end{lem}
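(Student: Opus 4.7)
The plan is to reduce the estimate to a prime-by-prime calculation on the compact open subgroups of the local tori $T(\QQ_p)$, and then recombine the local bounds into the claimed global one.

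First I would decompose $K^{m'}_T/K'_T = \prod_p K^{m'}_{T,p}/K'_{T,p}$ as a (finite) direct product, which is justified because for all but finitely many $p$ we have $K^m_{T,p} = K_{T,p}$ and hence $K^{m'}_{T,p} = K'_{T,p}$. The $Ae$-th power map respects this decomposition, so $\Theta'_{Ae}$ splits as $\prod_p \Theta'_{Ae,p}$, and it is enough to lower-bound each local factor $|\Theta'_{Ae,p}|$ and multiply.

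Next I would compare $K^{m'}_{T,p}/K'_{T,p}$ with the larger finite group $K^m_{T,p}/K_{T,p}$ of order $m_p$. From the discussion preceding the lemma, $[K^m_T : K^{m'}_T]$ and $[K_T : K'_T]$ are both globally bounded by $a$, so raising to the power $a!$ forces the $m$-groups into the primed subgroups. This compatibility step produces, after distribution over the at most $r$ primes where there is a nontrivial local contribution, the overall factor $1/a!^{r}$ in front; the content of the lemma is then to prove the matching local estimate for the plain power map on $K^m_{T,p}/K_{T,p}$.

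The heart of the argument is the local lower bound: for each prime $p$ with $K^m_{T,p}\neq K_{T,p}$, and for $n=Ae$, I claim that the image of the $n$-th power map on $K^m_{T,p}/K_{T,p}$ has size at least $\max(1,\, m_p/n^{r})$, where $r := \dim T$ is uniformly bounded by $\dim G$. This comes from the structure of compact open subgroups of $T(\QQ_p)$: any such subgroup is a $p$-adic analytic group of dimension $d_p = \dim T_{\QQ_p} \le r$ containing an open pro-$p$ subgroup topologically isomorphic to $\ZZ_p^{d_p}$ (of uniformly bounded index in the large-$p$ regime), on which the $n$-th power map becomes multiplication by $n$. The kernel of multiplication by $n$ on $\ZZ_p^{d_p}$ has order $|n|_p^{-d_p} \le n^{d_p}\le n^{r}$, so the image of the $n$-th power map has index at most $n^{r}$ in $K^m_{T,p}$ modulo bounded torsion, hence in $K^m_{T,p}/K_{T,p}$ as well. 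Multiplying the local bounds $\max(1,\, m_p/(Ae)^{r})$ and tacking on the factor $1/a!^{r}$ coming from the auxiliary powering step yields the stated inequality.

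The main technical obstacle is the uniformity of this local analysis: I need a bound on the index of the pro-$p$ part and on the $n$-torsion of $K^m_{T,p}$ that is independent of both $(H,X_H)$ and $p$. I would handle this by invoking the standard structure theory of $p$-adic Lie groups of uniformly bounded dimension, exploiting Lemma~\ref{reflex} to control the ramification of the splitting field of $T$ uniformly, and absorbing the finite small-$p$ defects into the uniform constant $a!^{r}$; this lets me replace $d_p$ by the universal upper bound $r$ and produces the single exponent $r$ appearing in $B=1/(Ae)^{r}$.
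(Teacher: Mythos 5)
Your overall plan—localize at each prime, bound the kernel of the $n$-th power map on the finite abelian group $K^m_{T,p}/K_{T,p}$ by a uniform power of $n$, and multiply—is the same as the paper's, but several specific steps are wrong or unjustified.

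First, your account of the $a!^{r}$ prefactor is incorrect. You claim it arises from "distribution over the at most $r$ primes where there is a nontrivial local contribution." In fact the primed groups $K^{m'}_{T,p}$, $K'_{T,p}$ differ from $K^m_{T,p}$, $K_{T,p}$ only at the single prime $p=3$ (by the construction $K^{m'}_H = K^{m'}_{H,3}\times\prod_{p\neq 3}K^m_{H,p}$), and at every other prime $\Theta'_{A',p}=\Theta_{A',p}$. At $p=3$ one passes from $\Theta'_{Ae,3}$ to $\Theta_{Ae\,a!,3}$ using $[K^m_{T,3}/K_{T,3}:K^{m'}_{T,3}/K'_{T,3}]\le a$; the change of exponent from $Ae$ to $Ae\cdot a!$ then worsens the kernel bound $(Ae)^{r}$ to $(Ae\cdot a!)^{r}$, and this $r$-th power of $a!$ is exactly where $a!^{r}$ comes from. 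The set of primes $p$ with $K^m_{T,p}\neq K_{T,p}$ is \emph{not} uniformly bounded by $r$, so your reading would not give a uniform constant at all.

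Second, the local estimate has a real gap. You set $r=\dim T$ and argue via an open pro-$p$ subgroup of $K^m_{T,p}$ isomorphic to $\ZZ_p^{d_p}$ "of uniformly bounded index." That index is not uniformly bounded: even for $p$ unramified, the quotient $K^m_{T,p}/(\text{pro-}p\text{ part})$ is roughly $T(\FF_p)$, of order $\sim p^{\dim T}$, and the prime-to-$p$ torsion of $K^m_{T,p}$ (the cyclic part in the local unit decomposition) has order growing with $p$. Appealing to Lemma~\ref{reflex} to "control ramification" does not help either: that lemma bounds only the \emph{degree} of the splitting field, not its discriminant, and the discriminant of $L_T$ is precisely the unbounded quantity tracked in Theorem~\ref{teo4.7}. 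What actually matters for the $n$-torsion of $K^m_{T,p}/K_{T,p}$ is not the index of the pro-$p$ part but the \emph{number of topological generators} of $K^m_{T,p}$: the paper embeds $T$ into a uniformly bounded product of tori $\Res_{E/\QQ}\GG_m$ and uses the local unit theorem $O_{E_v}^{\ast}\cong(\text{cyclic})\times\ZZ_p^{[E_v:\QQ_p]}$ to show $K^m_{T,p}$—and hence the quotient $K^m_{T,p}/K_{T,p}$—is generated by a uniformly bounded number $r$ of elements, from which $|(K^m_{T,p}/K_{T,p})[n]|\le n^{r}$ follows for all $p$ at once. Note also that $r=\dim T$ is already too small for split $\ZZ_p^{\ast}\cong\ZZ/(p-1)\times\ZZ_p$: two generators in dimension one. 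You would at minimum need a larger $r$, and an argument that is uniform in $p$ including the ramified primes; the paper's route via the local unit theorem gives this directly and should replace the $p$-adic-analytic-group heuristic.
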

\begin{proof}

Let $A' = A e $.
Since $K^{m}_T/K_T$ and  $K^{m'}_T/K'_T$  are  products of the
$K^{m}_{T,p}/K_{T,p}$ and the  $K^{m'}_{T,p}/K'_{T,p}$ respectively,
the groups $\Theta_{A'}$ and $\Theta_{A'}'$ are products
$$
\Theta_{A'}=\prod_{\{p: K^{m}_{T,p}\not= K_{T,p}\}}\Theta_{A',p}\mbox{ and } \Theta_{A'}'=\prod_{\{p: K^{m'}_{T,p}\not= K'_{T,p}\}}\Theta_{A',p}'.
$$

For all $p\neq 3$ , $\Theta_{A',p}=\Theta_{A',p}'$. 
As  $K^{m'}_T/K'_T$ is a subgroup of $K^{m}_T/K_T$ of index at most $a$,
we see that $\Theta'_{Ae,3}$ contains $\Theta_{A e a!,3}$.
 Hence $\Theta'_{A'}$ contains
$$
 \Theta_{A'a!,3}\cdot
\prod_{\{p\neq 3: K^{m}_{T,p}\not= K_{T,p}\}}\Theta_{A',p} .
$$

Fix a $p\neq 3$ such that $K^{m}_{T,p}\not= K_{T,p}$.
It is enough to prove that the order of the
kernel of the $A'$-th power morphism on $K^{m}_{T,p}/K_{T,p}$ is bounded uniformly on $T$ and $p$.

Let $E$ be the splitting field of $T$. Notice that the degree of $E$ over $\QQ$ is bounded in terms of the dimension of $T$, hence uniformly on $(H,X_H)$
(see the proof of lemma \ref{reflex}).
 Using a
basis of the character group of $T$, one can embed $T$ into a
product of a  finite and uniformly bounded number of  tori
$\Res_{E/\QQ} \GG_{m,E}$.
Via this embedding, $K^m_{T,p}$ and $K_{T,p}$ are
subgroups of the product of  $(\ZZ_p\otimes O_E)^*$. The group
  $(\ZZ_p\otimes O_E)^*$ is the direct product of the groups of units 
of $E_v$, completion of $E$ at the place $v$ with $v|p$.

By the local unit theorem (cf. \cite{Lo}), the group of units of such an $E_v$ is
a direct product 
of a cyclic group  with $\ZZ_p^{[E_v\colon \QQ_p]}$.

It follows that there exists a uniform constant $r$ such that the
 group $K^m_{T,p}/K_{T,p}$ is a finite abelian group,
product of at most $r$ cyclic factors. It follows that the size of
the kernel of  the $A'$-th power map on $K^m_{T,p}/K_{T,p}$ is bounded by
$D := {A'}^r$. We now take $B:= \frac{1}{D}$.

The result is then obtained by using the same argument  for $p=3$
after having replaced $A'$ by $A'a!$.

\end{proof}

%


We now put the previous ingredients together to prove lower bounds for Galois degrees.

Let $Y$ be a geometrically irreducible  subvariety of $V$ defined over $\ol\QQ$ such that 
$\Theta_A \cdot Y$ is contained in 
$\Gal(\oQ/F) \cdot Y \cap \pi^{-1}\pi(Y)$.
Let $Y'$ be as before. We use the notations used throughout this section $K'_H$, $\cL_{K'_H}$, $\pi'$, $L_T$, $L_C$, $A$, $a$ etc.

Recall that we have inequality (\ref{TTT}):

$$
\deg_{\cL_{K_H}}(\Gal(\ol\QQ/F) \cdot Y) \geq \frac{1}{a} \deg_{\cL_{K'_H}}(\Gal(\ol\QQ/F) \cdot Y').
$$

We will give a lower bound for the right hand side.
By lemma \ref{splitting}, $\deg_{\cL_{K'_H}}(\Gal(\ol\QQ/F) \cdot Y')$ is at least
$\deg_{\cL_{K'_{H}}}(\Gal(\oQ/F)\cdot Y' \cap {\pi'}^{-1}\pi'(Y'))$ times the number 
of $\Gal(\ol\QQ/F)$-conjugates of $\pi'(Y')$.

Let $N$ be a positive integer.
By Proposition  \ref{maximal} (under the assumption of the GRH) there is a constant $c_N$ depending on $N$ and the degree
of $F$ only such that the number of $\Gal(\ol\QQ/F)$-conjugates of $\pi'(Y')$ is at least $c_N (\log|\disc(L_C)|)^N$.

We now give a lower bound for $\deg_{\cL_{K'_{H}}}(\Gal(\oQ/F)\cdot Y' \cap {\pi'}^{-1}\pi'(Y'))$.
In the proof of lemma \ref{splitting}, we have seen that

\begin{multline*}
\deg_{\cL_{K'_{H}}}(\Gal(\oQ/F)\cdot Y' \cap {\pi'}^{-1}\pi'(Y')) = \\
\deg_{\cL_{K'_{H}}}(Y')\cdot 
|{\rm Irr}(\Gal(\oQ/F)\cdot Y' \cap {\pi'}^{-1}\pi'(Y'))|.
\end{multline*}

By lemma  \ref{rem2.16} applied with $e=a!$ we have

$$
|{\rm Irr}(\Gal(\oQ/F) \cdot Y' \cap {\pi'}^{-1}\pi'(Y'))| \geq  \frac{|\Theta'_{Aa!}|}{|K^{m'}_H/K'_H|}  |{\rm Irr}({\pi'}^{-1}\pi'(Y'))|.
$$

We have 

$$
\deg_{\cL_{K'_{H}}}(Y') |{\rm Irr}({\pi'}^{-1}\pi'(Y'))| = \deg_{\cL_{K'_{H}}}(\pi'^{-1}\pi'(Y'))
$$

Therefore, by lemma \ref{degrees}, we have 

$$
\deg_{\cL_{K'_{H}}}(Y') |{\rm Irr}({\pi'}^{-1}\pi'(Y'))| \geq |K^{m'}_H/K_H|
$$

These inequalities show that 
$$
\deg_{\cL_{K'_{H}}}(\Gal(\oQ/F)\cdot Y' \cap {\pi'}^{-1}\pi'(Y')) \geq |\Theta'_{Aa!}|
$$

Finally, lemma \ref{borneThetaA} implies that 
$$
|\Theta'_{Aa!}| \geq \frac{1}{a!^r}\prod_{\{p: K^m_{T,p}\not= K_{T,p}\}} \max(1, B |K^m_{T,p}/K_{T,p}|)
$$
with $B = \frac{1}{A^r (a!)^{r}}$. As a consequence we get
$$
\deg_{\cL_{K'_{H}}}(\Gal(\oQ/F)\cdot Y' \cap {\pi'}^{-1}\pi'(Y')) \geq \frac{1}{a!^r} \prod_{\{p: K^m_{T,p}\not= K_{T,p}\}} \max(1, B |K^m_{T,p}/K_{T,p}|)
$$
and using (\ref{TTT}) we get
$$
\deg_{\cL_{K_{H}}}(\Gal(\oQ/F)\cdot Y \cap {\pi}^{-1}\pi(Y)) \geq \frac{1}{aa!^r} \prod_{\{p: K^m_{T,p}\not= K_{T,p}\}} \max(1, B |K^m_{T,p}/K_{T,p}|).
$$

In view of lemma \ref{fibre},  what preeceeds applies to $Y' = V'$. We obtain therefore 
 the same lower bound for $\deg_{\cL_{K'_{H}}}(\Gal(\oQ/F)\cdot V' \cap {\pi'}^{-1}\pi'(V'))$.
Using the inequality (\ref{TTT})  we obtain the same lower bound 
for $\deg_{\cL_{K_{H}}}(\Gal(\oQ/F)\cdot V \cap {\pi}^{-1}\pi(V))$.

We obtain the following theorem:

\begin{teo}\label{teo4.7}
Assume the GRH for CM fields. Let $(G,X)$ be a
Shimura datum with $G$ semisimple of adjoint type.
Fix  positive integers $N$ and $R$.
There exist a positive real number $B$
depending only on $G$, $X$ and $R$
and a positive constant $c_{N}$ depending only on $G$, $X$, $R$ and $N$ such
that the following holds. 
 Let $K$ be a neat compact open subgroup
of $G(\AAA_{f})$ which is  a product of compact open subgroups $K_{p}$ of $G(\QQ_{p})$. 
 Let
$(H,X_H)$ be a Shimura subdatum of $(G,X)$ 
such that $H$ is the generic Mumford-Tate group on $X_{H}$. 
Let $F$ be a finite extension of $\QQ$ 
 containing  
  the reflex field $E(H,X_{H})$ of $(H,X_{H})$ 
 of degree bounded
by  $R$.
 Let $K_H$ be
$H(\AAA_f)\cap K$. 

Let $T$ be the connected centre of $H$. We
suppose that $T$ is non-trivial and we define
$L_{T}$ as the splitting field of $T$ . We recall that
$K_{T}:=T(\AAA_{f})\cap K$, $K_{T}^m$ is the maximal open compact  subgroup
of $T(\AAA_{f})$. Then $K_{T}=\prod K_{T,p}$ and $K_{T}^m=\prod K_{T,p}^m$
with $K_{T,p}=T(\QQ_{p})\cap K_{p}$ and 
$K_{T,p}^m$ is the maximal open compact subgroup of $T(\QQ_{p})$.

Let $V$ be a geometric component of 
$\Sh_{K_H}(H,X_H)$.

\begin{multline}\label{ear}
\deg_{\calL_{K_{H}}}(\Gal(\oQ/F)\cdot V)  \ge \\
 c_{N} \prod_{\{p: K^m_{T,p}\not= K_{T,p}\}} \max(1, B |K^m_{T,p}/K_{T,p}|)
\cdot (\log(|{\rm disc}(L_T)|))^N.
\end{multline}

There exists an integer $A$ depending on $G$, $X$ and $R$ only such that the following holds.
Let $\Theta_A$ be the image of the map $x\mapsto x^A$ on $K^m_T/K_T$. Then 
$\Theta_A \cdot V \subset \Gal(\ol\QQ/F) \cdot V \cap \pi^{-1}\pi (V)$.  

If $Y$ is a geometrically irreducible  subvariety of $V$ defined over $\ol\QQ$ such that 
$\Theta_A \cdot Y$ is contained in 
$\Gal(\oQ/F) \cdot Y \cap \pi^{-1}\pi(Y)$ the same holds for $Y$:

\begin{multline}\label{ear2}
\deg_{\calL_{K_{H}}}(\Gal(\oQ/F)\cdot Y) \ge \\
 c_{N} \prod_{\{p: K^m_{T,p}\not= K_{T,p}\}} \max(1, B |K^m_{T,p}/K_{T,p}|)
\cdot (\log(|{\rm disc}(L_T)|))^N.
\end{multline}
\end{teo}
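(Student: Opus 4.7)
The plan is to assemble the lemmas of this section by reducing first to a neat auxiliary level, then splitting the Galois orbit degree into a "horizontal" factor (number of components of the image under $\pi'$) and a "vertical" factor (degree inside the fibre). To that end, I would start with the étale cover $f\colon \Sh_{K'_H}(H,X_H)\to \Sh_{K_H}(H,X_H)$ of uniformly bounded degree $|K_H/K'_H|\le a$ introduced before Lemma \ref{lemma2.6}. Choosing an irreducible component $V'$ of $f^{-1}(V)$ (respectively $Y'$ of $f^{-1}(Y)$), the projection formula gives inequality (\ref{TTT}), so the whole problem reduces to bounding $\deg_{\cL_{K'_H}}(\Gal(\oQ/F)\cdot Y')$ from below.

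Next, I would apply Lemma \ref{splitting} to write this as a product of two factors. For the "horizontal" factor, Proposition \ref{maximal} (assuming GRH) furnishes a lower bound of the form $c_N(\log|\disc(L_C)|)^N$ for the number of $\Gal(\oQ/F)$-conjugates of $\pi'(Y')$. Since the isogeny $\alpha\colon T\to C$ has uniformly bounded kernel (Lemma \ref{noyau}), the splitting fields $L_T$ and $L_C$ coincide, producing the $(\log|\disc(L_T)|)^N$ factor in (\ref{ear}) and (\ref{ear2}).

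For the "vertical" factor, I would write
$$
\deg_{\cL_{K'_H}}(\Gal(\oQ/F)\cdot Y' \cap {\pi'}^{-1}\pi'(Y')) = \deg_{\cL_{K'_H}}(Y') \cdot |{\rm Irr}(\Gal(\oQ/F)\cdot Y' \cap {\pi'}^{-1}\pi'(Y'))|.
$$
The integer $A$ provided by Proposition \ref{gal_image}, combined with Lemma \ref{fibre}, ensures that $\Theta_A\cdot V \subset \Gal(\oQ/F)\cdot V \cap \pi^{-1}\pi(V)$, which is the hypothesis assumed for the $Y$-statement. Applying Lemma \ref{rem2.16} with $e=a!$ then yields
$$
|{\rm Irr}(\Gal(\oQ/F)\cdot Y' \cap {\pi'}^{-1}\pi'(Y'))| \geq \frac{|\Theta'_{Aa!}|}{|K^{m'}_H/K'_H|}\,|{\rm Irr}({\pi'}^{-1}\pi'(Y'))|,
$$
and Lemma \ref{degrees} gives $\deg_{\cL_{K'_H}}(Y')\cdot|{\rm Irr}({\pi'}^{-1}\pi'(Y'))|\geq |K^{m'}_H/K'_H|$. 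The vertical factor is therefore at least $|\Theta'_{Aa!}|$, which Lemma \ref{borneThetaA} bounds below by the product $\tfrac{1}{a!^r}\prod_{p}\max(1,B|K^m_{T,p}/K_{T,p}|)$ with $B=1/(Aa!)^r$.

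The main obstacle is bookkeeping: tracking how the uniform constants $a$, $A$, $a!^r$ depend only on $(G,X)$ and $R$ (and not on the particular subdatum $(H,X_H)$), verifying that $B$ and $c_N$ extracted from the sequence of lemmas combine into absolute constants of the required type, and confirming that the $V$-statement truly follows as the special case $Y=V$ via Lemma \ref{fibre}. Once all constants are consolidated into single positive constants $B$ and $c_N$, multiplying the horizontal and vertical lower bounds and invoking (\ref{TTT}) produces (\ref{ear2}), completing the argument.
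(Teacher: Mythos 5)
Your proposal follows the paper's own proof step for step: reduce via the étale cover $f$ and inequality (\ref{TTT}), split the degree via Lemma \ref{splitting}, handle the horizontal factor by Proposition \ref{maximal} (which is where GRH enters), handle the vertical factor by chaining Lemma \ref{fibre} (for $V$), Lemma \ref{rem2.16} with $e=a!$ (for $Y$), Lemma \ref{degrees}, and Lemma \ref{borneThetaA}, and finally absorb the uniform constants into $B$ and $c_N$. The only minor imprecision is attributing $L_T=L_C$ to the bounded kernel of the isogeny $T\to C$ -- any isogeny of tori preserves the splitting field regardless of the kernel's size; the boundedness from Lemma \ref{noyau} is used elsewhere (to produce the lift $r$ of $r_C$) -- but this does not affect the argument.
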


\begin{rem}
{\rm

Note that the field $L_T$ is equal to $L_C$ because the tori $C$ and $T$ are isogeneous.
 
 In the proof of theorem \ref{teo4.7} for technical reasons we assumed that $F$ contains the Galois closure of 
 $E(H,X_{H})$. This assumption does not change the validity of the theorem.


The constant $c_N$ from the Theorem \ref{teo4.7} is a multiple of that of Proposition 
\ref{maximal}, namely $\frac{1}{a \cdot a!^r} c_N$ with $c_N$ as in \ref{maximal}.

It should be noted that an inequality such as \ref{ear2} holds for $\deg_{\calL_{K}}(\Gal(\oQ/F)\cdot Y)$
for \emph{Hodge generic} subvarieties $Y$ of $\Sh_{K_H}(H,X_H)$. This is due to the fact that 
to obtain such an inequality, one applies  cor 5.3.10 of \cite{KY} which only holds for Hodge generic subvarieties.
In particular, this applies to a component $V$ of $\Sh_{K_H}(H,X_H)$.
In this paper we will use the following inequality:

\begin{multline}
\deg_{\calL_{K}}(\Gal(\oQ/E(G,X))\cdot V) \ge \\
c_{N} \prod_{\{p: K^m_{T,p}\not= K_{T,p}\}} \max(1, B |K^m_{T,p}/K_{T,p}|)
\cdot (\log(|{\rm disc}(L_T)|))^N.
\end{multline}

This formula  is a consequence of (\ref{ear}) and cor 5.3.10 of \cite{KY}.


Note also that if we take $F$ to be the Galois closure of $E(H,X_{H})$ in theorem \ref{teo4.7}
the degree of $F$ over $\QQ$ is uniformly bounded when $(H,X_{H})$ varies by lemma \ref{reflex}.

In the case where we consider subvarieties
$V$ such that the associated tori $T$ lie in one $\GL_n(\QQ)$-conjugacy class
with respect to some faithful representation $G\hookrightarrow
\GL_n$, we do not need to assume the GRH. Indeed, in this case  the
field $L_T$ is fixed and hence the term involving it is constant.
We only used the GRH to obtain this term.}
\end{rem}

\section{Special subvarieties whose degree of Galois orbits are bounded.}

\subsection{Equidistribution of $T$-special subvarieties.}

Let $(G,X)$ be a Shimura datum with $G$ semisimple of adjoint type
and let $K$ be an open compact subgroup of $G(\AAA_{f})$.
Let $X^+$ be a connected component of $X$.
In this case ($G$ is semisimple of adjoint type), 
the stabiliser of $X^+$ is the neutral component $G(\RR)^+$ of $G(\RR)$.
We let  $G(\QQ)^+ = G(\RR)^+\cap G(\QQ)$.
We remark that in this situation  $G(\QQ)^{+}=G(\QQ)_{+}$ with our previous notations.
 Let
$\Gamma=G(\QQ)^+\cap K$ and $S=\Gamma\backslash X^+$ a fixed
component of $\Sh_{K}(G,X)$. Note that $S$ is the image of $X^+\times \{1\}$
in $\Sh_{K}(G,X)$.

If $(H,X_{H})\subset (G,X)$ is a Shimura subdatum, we denote by
$\MT(X_{H})$ the generic Mumford-Tate group on $X_{H}$. If
$H'=\MT(X_{H})$, then $H'\subset H$, $H'^{\der}=H^{\der}$ and
$Z(H')^0\subset Z(H)^0$. 
This is a consequence
of the proof of \cite{Ul2} lemma 4.1. In the situation of loc. cit.
  Hodge groups are considered instead of Mumford-Tate groups but
  the proof can be easily adapted.   Fix $x\in X_{H}$.
   Let $X_{H}^{+}$ be the $H(\RR)^+$-conjugacy class of $x$
and $X_{H'}^+$ be the $H'(\RR)^+$-conjugacy class of $x$.
Note that as the centre of $H$  (resp.  $H'$) acts trivially on $x$,
$X_{H}^{+}$  (resp. $X_{H'}^+$) is also the $H^{\der}(\RR)^{+}$
(resp.$ H^{'\der}(\RR)^{+}$)  conjugacy class of $x$.
As  $H^{'\der}(\RR)^{+}=H^{\der}(\RR)^{+}$,
$X_{H'}^+=X_H^+$ and $(H',X_{H'})$ is a Shimura subdatum of
$(H,X_H)$.

\begin{defini}{\rm
Let $T_{\QQ}$ be a subtorus of $G$ such that $T(\RR)$ is compact. A
$T$-Shimura  subdatum $(H,X_{H})$ of $(G,X)$ is a Shimura
subdatum such that 
$T$ is the connected center  of the generic
  Mumford-Tate group $H'=T\cdot H^{\der}$ of $X_H$. Note
  that in this definition $T$ may be trivial. In this case
  the generic Mumford-Tate group $H'$ of $X_{H}$ is 
  semisimple.
}\end{defini}

\begin{defini}{\rm
A $T$-special subvariety of $S$ is an irreducible
component $Z$ of the image of  $\Sh_{K\cap H(\AAA_{f})}(H,X_{H})$
contained in $S$ for a $T$-Shimura subdatum $(H,X_{H})\subset
(G,X)$. In this case, we  say that $Z$ is associated to
$(H,X_{H})$. A $T$-special subvariety of $S$ is standard
if there exists a $T$-Shimura subdatum $(H,X_{H})$ of $(G,X)$
and a connected component $X_{H}^+$ of $X_{H}$
contained in $X^+$ such that 
$Z$ is the image of $X^+_{H}\times \{1\}$ in $S$.
If $Z$ is standard, then $Z$ is the image of
$(\Gamma\cap H(\RR)^+)\backslash X_{H}^+$ in $S$.
We denote by $\Sigma_T$ the set of $T$-special subvarieties of
$S$. }
\end{defini}

\begin{lem}\label{lemmetoto1}
A standard $T$-special subvariety $Z$ is associated to a
Shimura subdatum $(H,X_{H})$ such that $H=\MT(X_{H})=T\cdot H^{\der}$.
\end{lem}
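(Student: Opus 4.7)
The plan is to produce the desired subdatum by taking the generic Mumford--Tate subdatum of the one given by the definition of ``standard $T$-special''. Concretely, $Z$ being standard means there exists a $T$-Shimura subdatum $(H_0,X_{H_0}) \subset (G,X)$ and a connected component $X_{H_0}^+ \subset X^+$ with $Z$ equal to the image of $X_{H_0}^+ \times \{1\}$ in $S$. Set $H := \MT(X_{H_0})$. By the general fact recalled at the opening of the section (adapted from \cite{Ul2}, Lemme 4.1), one has $H \subset H_0$, $H^{\der} = H_0^{\der}$, and $Z(H)^0 \subset Z(H_0)^0$. Since $(H_0,X_{H_0})$ is $T$-Shimura, $Z(H)^0 = T$ by the very definition, and therefore $H = T \cdot H^{\der}$, which is already half of the conclusion.

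Next I would construct $X_H$: pick any $x \in X_{H_0}^+$ and set $X_H := H(\RR) \cdot x$ and $X_H^+ := H(\RR)^+ \cdot x$. That $(H,X_H)$ is a Shimura subdatum of $(G,X)$, with $H$ as its generic Mumford--Tate group, follows from \cite{Ul2}, Lemme 3.3 (already invoked in the proof of Lemma \ref{MTgenerique}), applied to a point $x$ which is Hodge-generic in $X_{H_0}$; such an $x$ can always be chosen.

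The last step, and the only one requiring a small verification, is the identification $X_H^+ = X_{H_0}^+$, so that $Z$ is also the image of $X_H^+ \times \{1\}$ in $S$ and $(H,X_H)$ is a witness of the standard $T$-speciality of $Z$. This is exactly the argument already given in the paragraph immediately preceding the definition of $T$-Shimura subdatum: the centre of a reductive group acts trivially under inner conjugation on any morphism $\SSS \to G_\RR$ factoring through it, so the $H_0(\RR)^+$-orbit of $x$ coincides with the $H_0^{\der}(\RR)^+$-orbit, which in turn equals the $H^{\der}(\RR)^+ = H(\RR)^+$-orbit of $x$ thanks to the equality of derived groups. No serious obstacle arises: the lemma is essentially a formal consequence of two structural facts already established in the surrounding discussion, namely $H^{\der} = H_0^{\der}$ and the triviality of central conjugation on the Hodge cocharacters.
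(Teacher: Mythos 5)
Your proposal is correct and follows essentially the same route as the paper's proof: pass to $H=\MT(X_{H_0})$, note $H=T\cdot H^{\der}$ directly from the definition of a $T$-Shimura subdatum, and invoke the central-conjugation argument from the paragraph preceding the definitions to get $X_H^+=X_{H_0}^+$. You have simply spelled out the steps that the paper compresses into the single sentence ``Write $H=\MT(X_{H_1})$, then $X_H=X_{H_1}$.''
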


\begin{proof}
If $Z$ is associated to $(H_{1},X_{H_{1}})$ and $Z$ is standard,
then $Z$ is the image of $X_{H_{1}}^+\times \{1\}$ in $S$
for some connected component $X_{H_1}^+$ of $X_{H_{1}}$ contained in $X^+$. Write
$H=\MT(X_{H_{1}})$, then
$X_{H}=X_{H_{1}}$ and $Z$ is also associated to $(H,X_{H})$ and is
standard.
\end{proof}

\begin{lem}\label{lemmetoto2}
Recall that $\Sigma_{T}$ is the set of $T$-special subvarieties of $S$.
Let $\alpha\in \Gamma$ and $T_{\alpha}=\alpha T\alpha^{-1}$. Then
$\Sigma_{T_{\alpha}}=\Sigma_{T}$.
\end{lem}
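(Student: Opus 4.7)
The plan is to show $\Sigma_T\subset\Sigma_{T_\alpha}$; the reverse inclusion then follows from the same argument applied to $\alpha^{-1}\in\Gamma$ (and the fact that $(T_\alpha)_{\alpha^{-1}}=T$). So let $Z\in\Sigma_T$. By Lemma \ref{MTgenerique}, I may assume that $Z$ is associated to a Shimura subdatum $(H,X_H)$ with $H=\MT(X_H)=T\cdot H^\der$, and that $Z$ is the image of $X_H^+\times\{1\}$ in $S$ for some connected component $X_H^+\subset X^+$ of $X_H$.

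Next I would consider the conjugated datum $(H_\alpha,X_{H_\alpha}):=(\alpha H\alpha^{-1},\,\alpha\cdot X_H)$, where $\alpha$ acts on morphisms $h\colon\SSS\to G_\RR$ by $h\mapsto\alpha h\alpha^{-1}$. Since $\alpha\in G(\QQ)\cap G(\RR)^+$, this is again a Shimura subdatum of $(G,X)$, and by functoriality of the Mumford--Tate construction under conjugation by an element of $G(\RR)$, $\MT(X_{H_\alpha})=\alpha\MT(X_H)\alpha^{-1}=T_\alpha\cdot H_\alpha^\der$, so $(H_\alpha,X_{H_\alpha})$ is a $T_\alpha$-Shimura subdatum. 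Moreover $\alpha X_H^+$ is a connected component of $X_{H_\alpha}$, and it is contained in $X^+$ because $\alpha\in G(\RR)^+$ stabilises $X^+$.

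The key observation is the identification of images in $S$. In $\Sh_K(G,X)=G(\QQ)\backslash X\times G(\AAA_f)/K$, taking $q=\alpha\in G(\QQ)$ and $k=\alpha^{-1}\in K$ (both valid because $\alpha\in\Gamma=G(\QQ)^+\cap K$) gives the equivalence $(x,1)\sim(\alpha x,\,\alpha\cdot 1\cdot\alpha^{-1})=(\alpha x,1)$ for every $x\in X^+$. Therefore the images of $X_H^+\times\{1\}$ and $\alpha X_H^+\times\{1\}$ in $S$ coincide, and both equal $Z$. Since $\alpha X_H^+\times\{1\}$ represents a connected component of $\Sh_{K\cap H_\alpha(\AAA_f)}(H_\alpha,X_{H_\alpha})$ whose image in $S$ is $Z$, we conclude that $Z$ is a standard $T_\alpha$-special subvariety, i.e.\ $Z\in\Sigma_{T_\alpha}$.

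There is no substantial obstacle here; the lemma is essentially a bookkeeping statement. The only point that requires attention is the compatibility of conjugation with the $T$-Shimura structure, which is exactly the functorial identity $\MT(\alpha\cdot h)=\alpha\MT(h)\alpha^{-1}$, together with the double use of $\alpha$ (once as an element of $G(\QQ)$ to move $X_H^+$ to $\alpha X_H^+$, and once as an element of $K$ to keep the $G(\AAA_f)$-component equal to $1$).
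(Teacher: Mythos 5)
Your reduction step has a gap. When you write ``By Lemma~\ref{MTgenerique}, I may assume that $Z$ is associated to a Shimura subdatum $(H,X_H)$ with $H=\MT(X_H)=T\cdot H^\der$, and that $Z$ is the image of $X_H^+\times\{1\}$,'' you are implicitly asserting that every $T$-special subvariety is a \emph{standard} $T$-special subvariety. Lemma~\ref{MTgenerique} does not say that: it produces \emph{some} subdatum $(H,X_H)$ with $H=\MT(X_H)$ and $Z$ the image of $X_H^+\times\{1\}$, but the connected centre of this $H$ is in general only $G(\QQ)$-conjugate to $T$, not equal to it. (The Hodge generic lift $x\in X^+$ and the adelic component in the proof of Lemma~\ref{MTgenerique} need not match those of the original $T$-Shimura subdatum presenting $Z$, and changing them conjugates $\MT(x)$ and hence its connected centre.) That passing from an arbitrary $T$-special subvariety to a standard one is genuinely nontrivial is precisely the content of Lemma~\ref{lemmetoto3}, which costs a Hecke operator and a change of torus. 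If you could take this reduction for free while preserving $T$, you would essentially be assuming the statement being proved.

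The paper's own argument avoids any reduction. It keeps the \emph{given} $T$-Shimura subdatum $(H,X_H)$, conjugates it by $\alpha$ to $(H_\alpha,X_{H_\alpha})=(\alpha H\alpha^{-1},\alpha\cdot X_H)$, notes that this is a $T_\alpha$-Shimura subdatum, and observes that the \emph{entire} images of $\Sh_{K\cap H(\AAA_f)}(H,X_H)$ and $\Sh_{K\cap H_\alpha(\AAA_f)}(H_\alpha,X_{H_\alpha})$ in $\Sh_K(G,X)$ coincide, via the identity $[x',h']_K=[\alpha x',\alpha h'\alpha^{-1}]_K$ (using $\alpha\in G(\QQ)$ and $\alpha^{-1}\in K$, which is exactly your ``key observation''). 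Every irreducible component of one image is then a component of the other, which gives $\Sigma_T\subset\Sigma_{T_\alpha}$ directly. Your conjugation identity is the same as the paper's; to repair the proof, simply drop the appeal to Lemma~\ref{MTgenerique} and apply that identity to an arbitrary component $[X_H^+\times\{h\}]$ of the image, with $h\in H(\AAA_f)$, rather than only to $h=1$.
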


\begin{proof}
Let $(H,X_{H})$ be a $T$-Shimura subdatum of $(G,X)$. Fix $x\in
X_{H}$. Let $H_{\alpha}=\alpha H \alpha^{-1}$ and $X_{H_{\alpha}}$
be the $H_{\alpha}(\RR)$-conjugacy class of $\alpha .x$. Then
$(H_{\alpha}, X_{H_{\alpha}})$ is a $T_{\alpha}$-Shimura subdatum
and the images of $\Sh_{K\cap H(\AAA_{f})}(H,X_{H})$ and $\Sh_{K\cap
H_{\alpha}(\AAA_{f})}(H_{\alpha},X_{H_{\alpha}})$ in
$\Sh_{K}(G,X)$ coincide.
\end{proof}

\begin{lem}\label{lemmetoto3}
Let $\{q_{1},\dots,q_{l}\}$ be a system of representatives of $G(\QQ)^+\backslash G(\QQ)$.
Let $T_{\QQ}$ be a subtorus of $G_{\QQ}$ such that $T(\RR)$ is compact.

There exists a finite subset $\{r_{1},\dots,r_{k}\}$ of
$G(\AAA_{f})$ such that any $T$-special subvariety of $S$ is a
component of the image by the Hecke operator $T_{q_{j}r_{i}}$ of a
standard ($q_{j}Tq_{j}^{-1}$)-special subvariety of $S$. 
\end{lem}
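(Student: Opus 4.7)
\emph{Plan.} Let $Z$ be a $T$-special subvariety of $S$. By Lemma~\ref{MTgenerique}, $Z$ corresponds to a Shimura subdatum $(H,X_H)$ with $H=T\cdot H^{\der}=\MT(X_H)$, realised as the image of $X_H^+\times\{h\}$ in $\Sh_K(G,X)$ for some component $X_H^+$ of $X_H$ and some $h\in H(\AAA_f)$. Since the connected components of $X$ are the $\{q_jX^+\}_{j=1}^l$, we have $X_H^+\subset q_jX^+$ for some $j$. The diagonal action of $q_j^{-1}\in G(\QQ)$ shows that $Z$ is the image of $X_{H^\natural}^+\times\{q_j^{-1}h\}$ in $\Sh_K(G,X)$, where $(H^\natural,X_{H^\natural}):=(q_j^{-1}Hq_j,q_j^{-1}X_H)$ has connected centre $q_j^{-1}Tq_j$ and $X_{H^\natural}^+:=q_j^{-1}X_H^+\subset X^+$. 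Replacing the representatives $\{q_j\}$ by $\{q_j^{-1}\}$ (also a system of representatives of $G(\QQ)^+\backslash G(\QQ)$) matches the form $q_jTq_j^{-1}$ of the statement.

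Decompose $h=t\cdot h^{\der}$ with $t\in T(\AAA_f)$ and $h^{\der}\in H^{\der}(\AAA_f)$. By strong approximation applied to the simply connected cover of $H^{\der}$, we write $h^{\der}=\delta\cdot k^{\der}\cdot z$ with $\delta\in H^{\der}(\QQ)_+$, $k^{\der}\in K\cap H^{\der}(\AAA_f)$, and $z$ lying in a finite set $F\subset H^{\der}(\AAA_f)$ whose cardinality is uniformly bounded as $(H,X_H)$ varies (the obstruction to strong approximation is governed by the degree of $\rho\colon\widetilde{H}\to H^{\der}$, which is uniformly bounded by Lemma~\ref{noyau}). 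Modulo $H(\QQ)_+\cdot K$, the element $h$ reduces to $t\cdot z$ with $z\in F$. The torus contribution $t\in T(\AAA_f)$ corresponds to passing to a Galois conjugate of the basic standard subvariety; by Lemma~\ref{lemmetoto2} and the structural theory of Shimura subdata, there exists a standard $(q_jTq_j^{-1})$-special subvariety $Z^\natural$ such that $Z$ appears as a component of the Hecke translate $T_{q_jr_i}(Z^\natural)$, for $r_i$ encoding the residual data from $z$.

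The finite set $\{r_1,\ldots,r_k\}\subset G(\AAA_f)$ is obtained by taking representatives of the images, in the finite set $G(\QQ)^+\backslash G(\AAA_f)/K=\pi_0(\Sh_K(G,X))$, of the elements of $F$ multiplied by the $q_j$'s; combined with the uniform bound on $|F|$, this yields a finite list independent of $(H,X_H)$. The main obstacle is verifying that the Galois conjugate $Z^\natural$ corresponding to $t\in T(\AAA_f)$ can indeed be realised as a standard $(q_jTq_j^{-1})$-special subvariety (possibly after conjugation by elements of $\Gamma=G(\QQ)^+\cap K$, as permitted by Lemma~\ref{lemmetoto2}), and that the Hecke modifications needed to pass from $Z^\natural$ to $Z$ are uniformly encoded by the finite set of representatives $\{r_i\}$; the resolution rests on the finiteness of $\pi_0(\Sh_K(G,X))$ combined with the uniform control of both $|F|$ and the $T$-Galois orbit structure on components of $\Sh_{K\cap H^\natural(\AAA_f)}(H^\natural,X_{H^\natural})$ lying over $S$.
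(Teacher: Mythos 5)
Your plan diverges from the paper's at the crucial step and, as written, contains gaps that defeat the argument.

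The paper's proof exploits the key structural observation that a $T$-Shimura subdatum $(H,X_H)$ satisfies $T\subset Z(H)$, hence $H\subset Z_G(T)$. Therefore the adelic element $h$ already lives in $Z_G(T)(\AAA_f)$, and one obtains the finite set $\{r_1,\ldots,r_k\}$ once and for all from a finite double-coset decomposition
$Z_G(T)(\AAA_f)=\bigcup_i Z_G(T)(\QQ)^+ r_i\,(Z_G(T)(\AAA_f)\cap K)$,
a class-number finiteness for the \emph{fixed} reductive group $Z_G(T)$. This is what makes the set $\{r_i\}$ independent of $(H,X_H)$. Moreover, since $z\in Z_G(T)(\QQ)^+$ commutes with $T$, conjugation by $z^{-1}$ preserves $T$, so after further conjugating by a suitable $q_j$ one obtains a $q_jTq_j^{-1}$-Shimura subdatum with adelic component $1$, giving the standard subvariety, and the Hecke operator $T_{q_jr_i}$ recovers $Z$. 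You never use $H\subset Z_G(T)$, and this is where your argument goes off the rails.

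Two concrete problems with your route. First, the decomposition $h=t\cdot h^{\der}$ with $t\in T(\AAA_f)$ and $h^{\der}\in H^{\der}(\AAA_f)$ is not available in general: $T\times H^{\der}\to H$ is an isogeny (kernel $W=T\cap H^{\der}$, nontrivial in general), so it is surjective on $\oQ$-points but not on $\AAA_f$-points; the obstruction lives in a Galois-cohomological cokernel that you do not control. Second, and more seriously, even granting a strong-approximation decomposition for $h^{\der}$ relative to $H^{\der}$, the resulting finite set $F$ of exceptional elements lives in $H^{\der}(\AAA_f)$ and \emph{varies} with $(H,X_H)$; a uniform bound on $|F|$ does not produce a single finite list of adelic elements valid for all $(H,X_H)$. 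Your attempt to repair this by passing to representatives in $\pi_0(\Sh_K(G,X))=G(\QQ)^+\backslash G(\AAA_f)/K$ does not work: the Hecke operator $T_g$ depends on the double coset $KgK$, not on the image of $g$ in $\pi_0$, so two elements with the same $\pi_0$-class generically give different Hecke correspondences. The closing appeal to ``uniform control of the $T$-Galois orbit structure'' is a placeholder, not an argument. To fix the proof you should replace the $T\cdot H^{\der}$ bookkeeping by the observation $H\subset Z_G(T)$ and use finiteness of class numbers for $Z_G(T)$, exactly as in the paper.
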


\begin{proof}
There exist $r_1,\ldots, r_k$  in $Z_G(T)(\AAA_f)$ such that we have a finite double coset decomposition
$$
Z_{G}(T)(\AAA_{f})=\cup_{i=1}^k Z_{G}(T)(\QQ)^+\cdot  r_{i}\cdot
(Z_{G}(T)(\AAA_{f})\cap K).
$$
Let $Z$ be a $T$-special subvariety of $S$ associated to a
$T$-Shimura subdatum $(H,X_{H})$ of $(G,X)$. 
Then $Z$ is the image in $S$ of
$X_{H}^+\times\{h\}$ for some $h\in H(\AAA_{f})$ and for some component $X_{H}^+$ of $X_{H}$.

By definition of a $T$-Shimura subdatum, $T \subset Z(H)$ (where $Z(H)$ is the centre of $H$) and therefore $H\subset Z_G(T)$.

    We can find  $z\in Z_{G}(T)(\QQ)^+$, $k\in Z_{G}(T)(\AAA_{f})\cap K$
    and $i\in \{1,\dots,k\}$ such that $h=zr_{i}k$. Therefore
    $Z$ is in the image of $z^{-1}.X_{H}^{+}\times\{r_{i}\}$
    in $S$.
    
    Fix $x\in X_{H}^+$, as $G(\QQ)$ is Zariski dense in $G(\RR)$
    there exists a $j\in \{1,\dots,l\}$ such that $q_{j}z^{-1}.x\in X^+$.

Define  $H_{z,j}:=q_{j}z^{-1} H zq_{j}^{-1}$
    and $X_{H_{z,j}}:=H_{z,j}(\RR)\cdot (q_{j}z^{-1}.x)$. Then $(H_{z,j}, X_{H_{z,j}})$ is
a Shimura subdatum of $(G,X)$. The generic Mumford-Tate group of $X_{H_{z,j}}$  is
    $$
    \MT(X_{H_{z,j}})=q_{j}z^{-1} \MT(X_{H})zq_{j}^{-1}=q_{j}z^{-1}(T
H^{\der})zq_{j}^{-1}=q_{j}Tq_{j}^{-1}.H_{z,j}^{der}.
    $$
Therefore $(H_{z,j}, X_{H_{z,j}})$ is a ($q_{j}Tq_{j}^{-1}$)-Shimura subdatum.

Note that $q_{j}z^{-1}X_{H}^+$ is a connected component $X_{H_{z,j}}^+$ of $X_{H_{z,j}}$
such that $X_{H_{z,j}}^+\subset X^+$. Let $Z_{0}$ be the image
of $X_{H_{z,j}}^+\times\{1\}$ in $S$.
Then
$Z_{0}$ is a  standard ($q_{j}Tq_{j}^{-1}$)-special subvariety associated to
$(H_{z,j},X_{H_{z,j}})$. This finishes the proof as
 $Z$ is a component of
$T_{q_{j}r_{i}}.Z_{0}$.\end{proof}

Let $(H,X_{H}) $
be a $T$-Shimura subdatum of $(G,X)$.
Our next task will be to construct a $T$-special Shimura subdatum
 $(L,X_{L})$ of $(G,X)$ maximal amongst $T$-Shimura subdata
  of $(G,X)$ containing $(H,X_{H})$. Our construction will show that 
  $L$ depends only on $T$ and not on $(H,X_{H})$.

The algebraic group $Z_{G}(T)$ is reductive and connected as the
centraliser of a torus. Let
$$
Z_{G}(T)=\wt{T}L_{1}\dots L_{r}
$$
be the decomposition of $Z_{G}(T)$ as an almost direct product of the connected centre $\wt{T}$
of $Z_G(T)$ and a product of
$\QQ$-simple factors $Z_G(T)^{\der}$.

Let  $L= \wt{T}L_{1}\dots L_{s}$ be the almost direct
product in $G$ of $\wt{T}$ and of the $L_{i}$'s such that $L_{i}(\RR)$ is
not compact.  Then
$$
H\subset Z_{G}(T)=Z_{G}(\wt{T}).
$$

 Let 
$$
(L')^{c}=Z_{G}(T)/L
$$
and $p: Z_{G}(T)\rightarrow (L')^{c}$ be the associated projection. 
Then $(L')^c(\RR)$ is compact.
As the almost $\QQ$-simple factors $H_k$ of $H^{\der}$ are such that
$H_k(\RR)$ are not compact, their projections by $p$ on$(L')^{c}$  are trivial. 
We deduce from this that $H\subset L$. 
Let $X_L$ be the $L(\RR)$-conjugacy class of some $x\in X_H$.

\begin{lem}\label{truc5.9}
The pair $(L,X_L)$ is a $T$-Shimura subdatum such that
$$
(H,X_H)\subset (L,X_L).
$$
\end{lem}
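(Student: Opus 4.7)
The plan is to verify three things in turn: (i) $H\subset L$, (ii) $(L,X_L)$ is a Shimura subdatum of $(G,X)$, and (iii) the generic Mumford--Tate group on $X_L$ has connected centre equal to $T$. The inclusion $X_H\subset X_L$ follows at once from (i) and the definition $X_L=L(\RR)\cdot x$ (with independence from the choice of $x\in X_H$ since $H(\RR)\subset L(\RR)$), which together with (i) yields $(H,X_H)\subset(L,X_L)$.

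For (i), note first that $T\subset\wt T\subset L$. For the derived part, the excerpt already observes that each $\QQ$-almost simple factor $H_k$ of $H^{\der}$ satisfies: $H_k(\RR)$ non-compact by axiom SV3 for $(H,X_H)$, while $p(H_k)$ sits inside $(L')^c(\RR)$ which is compact. Since $H_k$ is $\QQ$-almost simple, the kernel of $p|_{H_k}$ is either finite (in which case $p(H_k)$ is a non-compact almost simple group, contradiction) or all of $H_k$. Hence $p(H^{\der})=1$ and $H^{\der}\subset L$, so $H=T\cdot H^{\der}\subset L$.

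For (ii), one verifies Deligne's three axioms for $x\colon\SSS\to L_\RR$:
\begin{itemize}
\item (SV1) Since $x$ factors through $L_\RR$, the sub-Lie-algebra $\Lie(L)\subset \Lie(G)$ is $\Ad(x(\SSS))$-stable, so the induced Hodge structure on $\Lie(L)$ is a sub-Hodge structure of the one on $\Lie(G)$ and therefore has type $\{(-1,1),(0,0),(1,-1)\}$.
\item (SV2) As $x(i)\in L(\RR)$, conjugation by $x(i)$ preserves $L$. A standard fact (Cartan involutions restrict to Cartan involutions on $\theta$-stable reductive subgroups of a real reductive group) then gives that $\Ad(x(i))|_{L_\RR}$ is a Cartan involution of $L_\RR$, hence descends to one of $L^{\ad}_\RR$.
\item (SV3) The $\QQ$-simple factors of $L^{\ad}$ are exactly the $L_i^{\ad}$ for $i=1,\dots,s$, and by construction each $L_i(\RR)$ is non-compact.
\end{itemize}

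For (iii), the inclusion $T\subset Z(\MT(X_L))^0$ is immediate from $T\subset\MT(X_H)\subset\MT(X_L)$ together with the fact that $T$ is central in $L$ (as $T\subset\wt T=Z(L)^0$). The reverse inclusion is the delicate point and will be the main obstacle: it requires showing that the ``extra'' centre $\wt T/T$ does not enter $\MT(X_L)$. The idea is that for any $x'\in X_L$, writing $x'=\ell\cdot x\cdot\ell^{-1}$ with $\ell\in L(\RR)$, the cocharacter $\mu_{x'}$ has the same image as $\mu_x$ after projection to the cocentre $L/L^{\der}$ (a quotient of $\wt T$), because inner conjugation by $L(\RR)$ is trivial on the cocentre. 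Since $\mu_x$ already factors through $H'_\CC$, whose cocentre is a quotient of $T$, the image of $\MT(X_L)$ in $L/L^{\der}$ equals the $\QQ$-Zariski closure of the image of $T$, and pulling back shows $Z(\MT(X_L))^0\subset T$. Combined with the other inclusion this gives equality, proving that $(L,X_L)$ is a $T$-Shimura subdatum.
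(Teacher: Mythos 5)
Your proof is correct and follows essentially the same route as the paper: the inclusion $T\subset Z(\MT(X_L))^0$ comes from $T$ being central in $L$, and the reverse inclusion comes from observing that the Hodge cocharacters of all points of $X_L$ have the same image in the torus part of $L$ (the paper phrases this as $y(\SSS)\subset (T\cdot L^{\der})_\RR$ for all $y\in X_L$, forcing $\MT(X_L)\subset T\cdot L^{\der}$, which is the kernel-side version of your cocentre argument via $L/L^{\der}$). The only cosmetic difference is that the paper delegates the verification that $(L,X_L)$ is a Shimura datum to \cite{CU1}, Proposition 3.2, whereas you check the Deligne axioms directly.
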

\begin{proof}
The proof of  (\cite{CU1}  proposition 3.2) shows that $(L,X_L)$ is a Shimura
datum.
  As $H$ is contained in $L$,  $(H,X_H)\subset (L,X_L)$.
  We write $H'=\MT(X_H)$ and $L'=\MT(X_L)$. We have an inclusion
  of Shimura subdata
  $$
(H',X_H)\subset (L',X_L).
  $$
By definition $T=Z(H')^0\subset L'$ and $T$ commutes with $L'$, therefore
$T\subset Z(L')^0$. Fix $x\in X_H$, then $X_L$ is the
$L^{\der}(\RR)$-conjugacy-class
of $x$. By definition of the generic Mumford-Tate group of $X_H$
we know that
$$
x(\SSS)(\RR)\subset (T \cdot H^{\der})(\RR)\subset (T\cdot L^{\der})(\RR).
$$
We then see that for any $y\in X_L$ we have
$$
y(\SSS)(\RR)\subset  (T\cdot L^{\der})(\RR).
$$
Therefore $L'=\MT(X_L)\subset T\cdot L^{\der}$ and $Z(L')^0\subset T$.
  Finally $T=Z(L')^0$  and  $(L,X_L)$ is a $T$-Shimura subdatum.
\end{proof}

The following lemma will be useful later.

\begin{lem}\label{lemmetoto4}
Let $(M,X_{M})$ be a Shimura subdatum of $(G,X)$. Then there
exist at most finitely many $Y$ such that $(M,Y)$ is a
Shimura subdatum of $(G,X)$. Moreover as  $M$ varies among the
 reductive subgroups of $G$ the number of $Y$ is uniformly bounded.
\end{lem}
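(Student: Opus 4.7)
The plan is to identify each admissible $Y$ with an $M(\RR)$-conjugacy class of cocharacters of $M_\CC$ lying in a fixed $G(\CC)$-conjugacy class, and then to bound this finite set by invariants of $G$ alone.

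First I would parameterize morphisms $h\colon\SSS\to M_\RR$ by cocharacters $\mu_h\colon\GG_{m,\CC}\to M_\CC$ via $\mu_h(z)=h_\CC(z,1)$. Because $h$ is defined over $\RR$, the other factor of $h_\CC$ is the complex conjugate $\bar\mu_h$, so $h$ is uniquely determined by $\mu_h$, and the assignment is equivariant for the conjugation actions on both sides. Since $Y$ is, by definition, an $M(\RR)$-orbit of such $h$ and the inclusion $(M,Y)\subset(G,X)$ forces every $h\in Y$, viewed in $G_\RR$, to belong to $X$, the set of admissible $Y$ injects into the set of $M(\RR)$-orbits of cocharacters $\mu\colon\GG_{m,\CC}\to M_\CC$ that are $G(\CC)$-conjugate to $\mu_{h_0}$, where $h_0\in X_M$ is any fixed base point. (Over-counting is harmless here: a $\mu$ in the correct $G(\CC)$-class need not come from an $h\in X$, only from its $G(\CC)$-orbit, but this suffices for an upper bound.)

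Next I would control the number of $M(\CC)$-conjugacy classes of such cocharacters. Choose a maximal torus $T_M$ of $M_\CC$ contained in a maximal torus $T_G$ of $G_\CC$. Every $M(\CC)$-conjugacy class of cocharacters of $M_\CC$ meets $X_*(T_M)$ in a single $W(M_\CC,T_M)$-orbit, and the cocharacters in $X_*(T_M)$ that are $G(\CC)$-conjugate to $\mu_{h_0}$, viewed inside $X_*(T_G)$, all lie in the finite set $W(G_\CC,T_G)\cdot\mu_{h_0}$. Hence the number of $M(\CC)$-classes under consideration is at most $|W(G_\CC,T_G)|$, a quantity depending only on $G$. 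This immediately proves finiteness of the set of $Y$ and provides a first, partial uniform bound.

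It remains to bound, uniformly in $M$, the number of $M(\RR)$-orbits sitting inside a single $M(\CC)$-conjugacy class of cocharacters. This is the standard real-versus-complex orbit question, controlled by the non-abelian Galois cohomology $H^1(\Gal(\CC/\RR),Z_{M_\CC}(\mu))$; since $Z_{M_\CC}(\mu)$ is a connected reductive subgroup of $G_\CC$, the number of connected components of its real points, and hence this cohomology set, can be bounded in terms of $\dim G$ only. Multiplying the two bounds yields the required uniform bound on the number of $Y$ as $M$ ranges over reductive subgroups of $G$. I expect this last step, the uniform control of real-versus-complex conjugacy, to be the main technical subtlety; the Weyl-group counting in the previous paragraph is immediate once the cocharacter bijection is in place.
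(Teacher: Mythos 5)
Your approach diverges from the paper's and contains a genuine gap in the final step. The first two stages are fine: parameterizing $Y$ via the cocharacters $\mu_h$ and bounding the number of $M(\CC)$-conjugacy classes of admissible cocharacters by $|W(G_\CC,T_G)|$ is a standard and correct reduction. The problem is the passage from $M(\CC)$-orbits to $M(\RR)$-orbits. You invoke $H^1(\Gal(\CC/\RR), Z_{M_\CC}(\mu))$, but this object is not well defined: the cocharacter $\mu=\mu_h$ is a morphism $\GG_{m,\CC}\to M_\CC$ that is \emph{not} defined over $\RR$ (only the full $h\colon\SSS\to M_\RR$ is), so $Z_{M_\CC}(\mu)$ carries no $\Gal(\CC/\RR)$-action and the nonabelian $H^1$ makes no sense. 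The genuine descent question of counting $M(\RR)$-orbits inside a single $M(\CC)$-orbit of cocharacters amounts to counting $M(\RR)$-orbits on the complex homogeneous space $M(\CC)/Z_{M(\CC)}(\mu)$ (a Levi quotient, not a flag variety), which is a Wolf-type problem rather than a Galois-cohomology computation, and you would still need a uniform bound as $M$ varies. If you want to salvage a cohomological version, you should work with $h$ itself rather than $\mu$: $Z_M(h)$ is an $\RR$-group, and $M(\RR)$-orbits inside one $M(\CC)$-orbit of real $h$'s are governed by $\ker\bigl(H^1(\RR,Z_M(h))\to H^1(\RR,M)\bigr)$; but then one must also handle the discrepancy between being $G(\RR)$-conjugate and being $M(\CC)$-conjugate (which your intermediate reduction silently elides), and provide a uniform bound on $|H^1(\RR,Z_M(h))|$.

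The paper's argument sidesteps all of this by staying entirely over $\RR$: given $x_1\in X_{1,M}$, $x_2\in X_{2,M}$ in $X$, with $x_2=\alpha x_1\alpha^{-1}$ for $\alpha\in G(\RR)$, it compares the Cartan decompositions $G(\RR)=P_iK_i$ and $M(\RR)=(P_i\cap M)(K_i\cap M)$ attached to $x_1,x_2$, and shows via an elementary computation (using \cite{Ul2} lemma 3.11) that the $P_1$-component of $\alpha$ can be cancelled by an element of $M(\RR)$, leaving $\alpha\in M(\RR)K_1$. Since $K_1^+$ fixes $x_1$, the number of possible $Y$'s is then bounded by $|K_1/K_1^+|$, which is uniform. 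This is cleaner and, unlike your argument, does not leave the uniformity or even finiteness of the real-versus-complex orbit count unresolved.
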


\begin{proof}
Let $X_{1,M}$ and $X_{2,M}$ such that $(M,X_{1,M})$ and
$(M,X_{2,M})$ are Shimura subdata of $(G,X)$. Fix $x_{i}\in
X_{i,M}$ and $\alpha\in G(\RR)$ such that
$$
x_{2}= \alpha. x_{1}=\alpha x_{1}\alpha^{-1}.
$$
     Let $K_{i}=Z_{G}(x_{i}(\sqrt{-1}))(\RR)$ the
associated maximal compacts of $G(\RR)$. We have the Cartan
decompositions:
$$
G(\RR)=P_{1}K_{1}=P_{2}K_{2} \ \ \mbox{ and } \ M(\RR)=(P_{1}\cap M)\cdot
( K_{1}\cap  M)=(P_{2}\cap M) \cdot  (K_{2}\cap  M).
$$
We then have $K_{2}=\alpha K_{1}\alpha^{-1}$ and $P_{2}=\alpha
P_{1}\alpha^{-1}$. As the Cartan decompositions are conjugate in
$M(\RR)$, there exists $h\in M(\RR)$ such that
$$
K_{2}\cap M=h(K_{1}\cap M)h^{-1} \ \ \mbox{ and } \ P_{2}\cap
M=h(P_{1}\cap M)h^{-1}.
$$

Let $\gamma=h^{-1}\alpha=p.k$ with $p\in P_{1}$ and $k\in K_{1}$.
Then
$$
(\star)\ \ K_{1}\cap M=pK_{1}p^{-1}\cap M \ \ \mbox{ and }\
P_{1}\cap M=pP_{1}p^{-1}\cap M.
$$

By (\cite{Ul2} lemma 3.11) we have the following:
\begin{enumerate}
\item Let $p,q$ and $r$ be elements of $P_{1}$ such that
$pqp^{-1}=r$ then $p^2q=qp^2$. \item Let $p\in P_{1}$ and
$k_{1}$ and $k_{2}$ be elements of  $K_{1}$ such that $pk_{1}p^{-1}=k_{2}$ then
$p^{2}k_{1}=k_{1}p^2$.
\end{enumerate}

Then $(\star)$ and (1) imply that $p^2\in Z_{G}(P_{1}\cap M)(\RR)$ and $(\star)$ and (2) imply that $p^2\in
Z_{G}(K_{1}\cap M)(\RR)$. We then find that
$$
p^2\in Z_{G}(M)(\RR)\subset Z_{G}(x_{1}(\sqrt{-1}))(\RR)=K_{1}
$$
so $p^2\in P_{1}\cap K_{1}$ is trivial and $p=1$.

We now know that $\alpha=h\gamma$ with $h\in M(\RR)$ and
$\gamma\in K_{1}$. Fix a set of representatives
$\{\gamma_{1},\dots,\gamma_{r}\}$ in $K_{1}$ of $K_{1}/K_{1}^+$.
As $K_{1}^+$ fixes $x_{1}$ we obtain that  $\gamma_{i}.x_{1}\in X_{2,M}$ for some $i\in
\{1,\dots,r\}$.
 This finishes the
proof of the lemma.
\end{proof}

\begin{teo}\label{T-equi}
Fix a subtorus $T_{\QQ}$ of $G$ with $T(\RR)$ compact. Let $(Z_{n})$ be a
sequence of $T$-special subvarieties  of $S$. Let
$(\mu_{n}):=(\mu_{Z_{n}})$ be the associated sequence of probability
measures.  There exists a $T$-special subvariety $Z$ of $S$ and a
subsequence $(Z_{n_{k}})$ such that $(\mu_{n_{k}})$ converges weakly
to $\mu_{Z}$. Moreover $Z$ contains $Z_{n_{k}}$ for all $k$ large
enough.
\end{teo}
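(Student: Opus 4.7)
The plan is to reduce the statement to the strongly special equidistribution theorem of Clozel--Ullmo \cite{CU1}, using the uniformity provided by Lemma \ref{truc5.9}.

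First I would eliminate the ``standard'' technicality. By Lemma \ref{lemmetoto3} every $Z_n$ is a component of a Hecke transform $T_{q_{j_n} r_{i_n}}\cdot Z_n^{\rm st}$ of a standard $(q_{j_n}Tq_{j_n}^{-1})$-special subvariety; since there are finitely many choices of $(i_n,j_n)$, after passing to a subsequence we may assume $(i_n,j_n)=(i,j)$ is constant. Lemma \ref{lemmetoto2} then lets us replace $T$ by $q_jTq_j^{-1}$ and it suffices to prove the theorem when each $Z_n$ is itself standard, since the push-forward by a fixed Hecke correspondence sends a weakly convergent sequence of canonical measures to a weakly convergent sequence of canonical measures.

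Next, using Lemma \ref{truc5.9}, I would exhibit a single $T$-Shimura subdatum $(L,X_L)$ — depending only on $T$, not on~$n$ — such that every $(H_n,X_{H_n})$ is contained in $(L,X_L)$. Write $H_n=T\cdot H_n^{\der}$ and let $Z_L$ be the standard $T$-special subvariety of $S$ attached to $(L,X_L)$, so that $Z_n\subset Z_L$ for every $n$. The key structural observation is that $T$ lies in the centre of $\MT(X_L)=T\cdot L^{\der}$, so $T(\RR)$ commutes with $x(\SSS)$ for every $x\in X_L$; hence $T(\RR)$ acts trivially on $X_L$ and $X_L$ is also the $L^{\der}(\RR)$-conjugacy class of any of its points. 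Consequently each $(H_n^{\der},X_{H_n})$ is a Shimura subdatum of $(L^{\der},X_L)$ whose defining group is semisimple, i.e.\ the corresponding subvariety in the Shimura variety attached to $(L^{\der},X_L)$ is strongly special, and the canonical probability measure attached to $Z_n$ coincides, under the natural identification of $Z_L$ with a component of that Shimura variety, with the canonical measure of the strongly special subvariety.

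With these identifications in place I would invoke the equidistribution theorem of \cite{CU1}: after extracting a subsequence, the measures of the strongly special projections converge weakly to the canonical measure of a strongly special subvariety $M$ of the ambient component, and moreover $M$ contains the projections of $Z_{n_k}$ for all sufficiently large $k$. Pulling $M$ back to the $(L,X_L)$-picture (i.e.\ replacing the defining group of $M$ by its almost direct product with $T$) yields a $T$-special subvariety $Z\subset Z_L\subset S$ with $\mu_Z=\lim\mu_{Z_{n_k}}$ and $Z\supset Z_{n_k}$ for $k$ large. The main obstacle I anticipate is Step 3: one must verify carefully that condition (b) in the definition of ``strongly special'' of \cite{CU1} is satisfied after the reduction, and that the measures on both sides of the identification $Z_L \leftrightarrow (L^{\der},X_L)$-component really do match; this is precisely the point the authors allude to in their parenthetical remark about \cite{Ul2} Rem.~3.9.
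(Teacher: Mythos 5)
Your proposal follows essentially the same route as the paper: reduce to standard $T$-special subvarieties via Lemmas~\ref{lemmetoto3} and~\ref{lemmetoto2}, embed everything into a fixed $T$-special Shimura subdatum $(L,X_L)$, observe that within $(L,X_L)$ the $Z_n$ become strongly special, invoke the Clozel--Ullmo equidistribution theorem, and finally check that the limit $Z$ is again $T$-special. That skeleton is correct.

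There is, however, a genuine gap in Step~2. You claim that Lemma~\ref{truc5.9} furnishes a \emph{single} $T$-Shimura subdatum $(L,X_L)$ ``depending only on $T$, not on $n$''. Lemma~\ref{truc5.9} only guarantees this for the \emph{group} $L$: the centraliser construction shows $L$ is determined by $T$. But $X_L$ is defined there as the $L(\RR)$-conjugacy class of some $x\in X_{H_n}$, and as $(H_n,X_{H_n})$ varies this conjugacy class can vary. So the $X_L$ you get depends on $n$ a priori, and your ``fixed $Z_L$'' is not yet fixed. The paper closes this precisely by invoking Lemma~\ref{lemmetoto4}, which says that for a fixed reductive $M$ there are only finitely many $Y$ with $(M,Y)$ a Shimura subdatum of $(G,X)$ --- after passing to a further subsequence one may then take $X_L$ constant. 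Without this lemma (or an equivalent finiteness statement), your reduction to a single ambient component $S_L$ does not go through. A secondary wrinkle: you pass to $(L^{\der},X_L)$ and treat it as a Shimura datum, but the morphisms $x\in X_L$ factor through $(T\cdot L^{\der})_\RR$, not $L^{\der}_\RR$, so $(L^{\der},X_L)$ is not literally a Shimura datum; the paper instead passes to the adjoint datum $(L^{\ad},X_{L^{\ad}})$ and uses the fact that strong specialness is detected there. This is fixable but should be phrased carefully, since the Clozel--Ullmo theorem is stated for adjoint ambient data.
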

\begin{proof}

We first give the proof assuming that  $Z_{n}$ is a sequence of
standard $T$-special subvarieties of $S$ associated to  $T$-special
Shimura subdata $(H_{n}, X_{n})$ of $(G,X)$ with
$H_{n}=\MT(X_{n})=TH_{n}^{\der}$. 
 Using the lemmas
\ref{lemmetoto4} and \ref{truc5.9} we may assume that for all
$n\in \NN$, $(H_n, X_n)$ is a Shimura subdatum of the $T$-special Shimura
datum $(L,X_L)$.

Therefore we may assume that $(Z_{n})$ is contained in a fixed
component $S_{L}$ of $\Sh_{L(\AAA_{f})\cap K}(L,X_{L})$.
Then $(Z_n)$ is a sequence of strongly special subvarieties of $S_L$
in the sense of \cite{CU1} 4.1.
Let $(L^{\ad},X_{L^{\ad}})$ be the adjoint
Shimura datum and $K_L^{\ad}$ a compact open subgroup
containing the image of $L(\AAA_{f})\cap K$ in $L^{\ad}(\AAA_f)$.
We recall that $Z_n$ is a strongly special subvariety of $S_L$
if and only if its image  $Z_n^{ad}$ in $\Sh_{K_L^{\ad}}(L^{\ad},X_{L^{\ad}})$
is strongly special.  As $T$ is the connected center of $H_n$ and
$T$ is contained in the center of $L$ we see that $Z_n^{ad}$
is defined by a Shimura subdatum
$(H'_n,X'_n)$ of $(L^{\ad},X_{L^{\ad}})$ with $H'_n$ semisimple
and that $Z_n^{\ad}$ is strongly special.

Note that the condition (b)
in the definition of "strongly special" (\cite{CU1} 4.1) is in fact implied
by the first: let $(F,X_F)$
be a Shimura subdatum of an adjoint Shimura datum $(G,X)$ with
$F$ semisimple. Let $\alpha:\SSS \rightarrow F_{\RR}$ be a element
of $X_F$ and $K_{\alpha}=Z_G(\alpha(\sqrt{-1}))$ be the associated
maximal compact
subgroup of $G(\RR)$. Then $Z_G(F)(\RR)\subset Z_G(\alpha(\sqrt{-1}))$ is
compact. Therefore $Z_G(F)$ is $\QQ$-anisotropic (even $\RR$-anisotropic)
and $(F,X_F)$ satisfies the condition (b") of (\cite{CU1} 4.1) which is
equivalent to the condition (b).

The theorem  4.6 of \cite{CU1} proves that, after possibly having
replaced $(Z_{n})$ by a subsequence, there exists a special
subvariety $Z\subset S_{L}$ such that $(\mu_{Z_{n}})$ converges
weakly to $\mu_{Z}$ and $Z_{n}\subset Z$ for all $n\gg 0$. We can
find a Shimura subdatum $(H,X_{H})$ associated to $Z$ such that
for any $n$ large enough the following inclusions of Shimura data
hold:
$$
(H_{n}, X_{n})\subset (H,X_{H})\subset (L,X_{L}).
$$
We once again write $L'=\MT(X_L)$ and $H'=\MT(X_H)$.
Then
$$
(H_{n}, X_{n})\subset (H',X_{H})\subset (L',X_{L}).
$$
By following the proof of the Lemma \ref{truc5.9}
we deduce that $Z(H')^0=Z(H_n)^0=Z(L')^0$ for every $n$ large enough and
consequently $Z$ is a $T$-special subvariety.

This finishes the proof assuming the $Z_{n}$ are standard $T$-special
subvarieties of $S$.  Without this assumption, using the lemmas \ref{lemmetoto3} 
and \ref{lemmetoto1} we may assume that
there exists $q\in G(\QQ)$, $\theta\in G(\AAA_{f})$ and a sequence of 
$(qTq^{-1})$-special Shimura subdata $(H'_{n}, X'_{n})$ of $(G,X)$
with $H'_{n}=\MT(X'_{n})$ with the following property. There
exists a sequence of standard  $(qTq^{-1})$-special subvarieties
$Z'_{n}$ (with $Z'_{n}$  the image of 
$X^{'+}_{n}\times \{1\}$ in $S$ for some component $X^{'+}_{n}$ of $X'_{n}$)
such that $Z_{n}$ is the image of $X^{'+}_{n}\times \{\theta\}$ in $S$.
Let $\mu'_{n}:=\mu_{Z'_{n}}$ be the associated
 sequence of probability measures. Then  the weak convergence of $\mu_{n}$ 
 to $\mu_{Z}$ for some special subvariety containing $Z_{n}$ for $n$ big enough
 are deduced from
the corresponding  weak convergence of $\mu'_{n}$ to $\mu_{Z'}$ for some special subvariety $Z'$ containing the 
$Z'_{n}$ for $n\gg 0$. The reader may check that 
the proof given previously guarantees that $Z$ is $T$-special.
\end{proof}

A formal consequence of theorem \ref{T-equi} is the following result.

\begin{cor}
Let $(Z_{n})_{n\in \NN}$ be a sequence of $T$-special subvarieties of $S$ and $Z$
be a component of the Zariski closure $\overline{\cup_{n\in \NN}Z_{n}}$   of $\cup_{n\in \NN}Z_{n}$. Then $Z$ is $T$-special.
\end{cor}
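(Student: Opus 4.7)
The plan is to deduce the corollary from Theorem~\ref{T-equi} by a pigeonhole reduction, a weak-limit support argument, and an induction on $\dim Z$ to close the recursive extraction.

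First, I would perform a reduction. Let $W := \overline{\bigcup_{n} Z_n}$; it has finitely many irreducible components, one of which is $Z$. Each $Z_n$ is irreducible and hence contained in a unique component of $W$, so by pigeonhole infinitely many $Z_n$ lie in $Z$. After replacing $(Z_n)$ by this subsequence and using that $Z$ is an irreducible component of the original closure $W$, one gets $\overline{\bigcup_{n} Z_n}=Z$. If some $Z_n$ equals $Z$, then $Z$ is $T$-special and we are done; otherwise, apply Theorem~\ref{T-equi} to extract a further subsequence $(Z_{n_k})$ and a $T$-special subvariety $Z'$ with $\mu_{n_k}\to\mu_{Z'}$ weakly and $Z_{n_k}\subset Z'$ for $k$ large. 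Since the support of a weak limit of probability measures is contained in the topological closure of the union of their supports and each $Z_{n_k}$ lies in the (topologically) closed set $Z$, we obtain $Z'\subset Z$.

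The main step is to establish $Z'=Z$, which I would prove by induction on $\dim Z$; the base case $\dim Z=0$ is immediate since then $Z$ is a point that must coincide with some $Z_n$. For the inductive step, suppose $Z'\subsetneq Z$. The index set $M:=\{n:Z_n\not\subset Z'\}$ is infinite, for otherwise almost all $Z_n\subset Z'$ and the Zariski density of $\bigcup Z_n$ in $Z$ forces $Z\subset Z'$. From $Z=\overline{\bigcup_n Z_n}=Z'\cup\overline{\bigcup_{n\in M}Z_n}$ together with irreducibility of $Z$ and $Z'\subsetneq Z$ we conclude $\overline{\bigcup_{n\in M}Z_n}=Z$. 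Applying Theorem~\ref{T-equi} to $(Z_n)_{n\in M}$ produces a new $T$-special $Z^{(2)}\subset Z$ distinct from $Z'$, since $Z^{(2)}$ contains some $Z_n$ with $n\in M$. Iterating this procedure either terminates, in which case the $Z_n$ are covered by finitely many proper $T$-special subvarieties $Z^{(m)}$ and irreducibility of $Z$ forces $Z$ to equal one of them, or yields an infinite sequence of distinct proper $T$-special subvarieties $(Z^{(m)})_{m\ge 1}$ in $Z$.

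In the infinite case I would apply Theorem~\ref{T-equi} a second time to the sequence $(Z^{(m)})$ itself, obtaining a $T$-special subvariety $Z^{(\infty)}\subset Z$ containing $Z^{(m_\ell)}$ for all $\ell$ large. Because the $Z^{(m_\ell)}$ are pairwise distinct irreducible subvarieties of $Z^{(\infty)}$, the dimension of $Z^{(\infty)}$ must strictly exceed that of almost all of them. The main obstacle is to verify that this secondary extraction saturates at $Z$: I expect to handle it by iterating the secondary extraction, combining the strict dimension increase at each round with the induction on $\dim Z$ applied to the Zariski closure of the indices not yet captured by $Z^{(\infty)}$, so that after finitely many rounds we reach $\dim Z^{(\infty)}=\dim Z$, at which point irreducibility forces $Z^{(\infty)}=Z$ and $Z$ is $T$-special, completing the argument.
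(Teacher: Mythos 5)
Your reduction to the situation $\overline{\bigcup_n Z_n}=Z$ and your weak-limit support argument showing $Z'\subset Z$ are both correct, and match the paper. The difficulty is in establishing $Z'=Z$, and here your iteration does not close. The primary iteration (producing $Z'=Z^{(1)}, Z^{(2)},\dots$ by repeatedly passing to the uncovered indices) is sound: either it stops, in which case irreducibility of $Z$ together with $Z^{(j)}\subset Z$ forces $Z$ to equal one of the $Z^{(j)}$, or it yields infinitely many distinct proper $T$-special subvarieties of $Z$. But in the infinite case, one application of Theorem~\ref{T-equi} to the sequence $(Z^{(m)})$ only gives some $T$-special $Z^{(\infty)}\subset Z$ containing cofinitely many $Z^{(m_\ell)}$; there is no reason for $Z^{(\infty)}=Z$, and the subsequent claim of a ``strict dimension increase at each round'' is not justified: a fresh round of the whole procedure applied to the indices not captured by $Z^{(\infty)}$ can perfectly well produce a subvariety of dimension no larger than $\dim Z^{(\infty)}$. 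The induction on $\dim Z$ is not usable here either, because the induction hypothesis concerns Zariski closures of dimension $<\dim Z$, whereas the closures $\overline{\bigcup_{n\in M}Z_n}$ you form along the way all equal $Z$ itself.

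The paper avoids this entirely by a diagonal-subsequence argument that should replace your iteration. Since each $Z_n$ is defined over $\overline{\QQ}$, so is $Z$, hence $Z$ has only countably many proper $\overline{\QQ}$-subvarieties $Y_1,Y_2,\dots$. Because $\overline{\bigcup_n Z_n}=Z$ and $Y_1\cup\dots\cup Y_k\subsetneq Z$ for each $k$, one can diagonally extract a subsequence $(Z_{n_k})$ with $Z_{n_k}\not\subset Y_j$ for all $k\ge j$; then \emph{every} cofinite subfamily of $(Z_{n_k})$ has Zariski closure equal to $Z$. A single application of Theorem~\ref{T-equi} to this ``generic'' subsequence gives a $T$-special $Z'$ containing $Z_{n_k}$ for $k$ large, hence $Z\subset Z'$, while the support argument (which you already have) gives $Z'\subset Z$. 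This one-step argument is what makes the proof close; your iterated version would need, at minimum, a correct termination or saturation lemma that you have not supplied.
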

\begin{proof}
Let $I_{Z}:=\{n\in \NN, Z_{n}\subset Z\}$. Then formal properties of the Zariski topology show that
$\cup_{n\in I_{Z}}Z_{n}$ is Zariski dense in $Z$.
If there exists  $n\in I_Z$ such that $Z_n=Z$, then $Z$ is $T$-special, otherwise $I_Z$ is infinite.
 Passing to a subsequence we may and do assume
that for all $n\in \NN$,  $Z_{n}\subset Z$. As  $Z_{n}$ is defined over $\oQ$ for all $n$ we see that
$Z$ is defined over $\oQ$. As $Z$ has only countably many subvarieties defined over $\oQ$,
using a diagonal process and passing to a subsequence we may assume that $(Z_{n})_{n\in \NN}$
is a ``generic sequence'' of $Z$: for any subvariety $Y$ of $Z$ with $Y\neq Z$
the set $I_{Y}:=\{n\in \NN, Z_{n}\subset Y\}$ is finite.  In particular  for any
subsequence $(Z_{n_{k}})_{k\in \NN}$ of $(Z_{n})_{n\in \NN}$ we have $\overline{ \cup_{k\in \NN} Z_{n_{k}}}=Z$.

Moreover using theorem  \ref{T-equi} and passing to a subsequence we may and do assume that there exists  a $T$-special subvariety $Z'$ of $S$ such that $\mu_{Z_{n}}$ converges
weakly to $\mu_{Z'}$ and for all $n\in \NN$, $Z_{n}\subset Z'$.
As $(Z_{n})_{n\in \NN}$ is generic in $Z$ we get $Z=\overline{ \cup_{n\in \NN} Z_{n}}\subset Z'$.
As $Z$ is closed and as for all $n$, $\Supp(\mu_{Z_{n}})\subset Z$ we get using the weak convergence
of $(\mu_{Z_{n}})_{n\in\NN}$ to $\mu_{Z'}$ that 
$Z'=\Supp(\mu_{Z'})\subset Z$. Therefore $Z=Z'$ is a $T$-special subvariety of $S$.

\end{proof}

\subsection{Special subvarieties whose Galois orbits have bounded degrees.}

Let $(G,X)$, $X^+$, $\Gamma$ and $S$ be as in the previous section. We recall
that we have fixed a faithful representation $G \subset \GL(V_{\QQ})$
on an $n$-dimensional $\QQ$-vector space $V_{\QQ}$. We fix a $\ZZ$-lattice
$V_{\ZZ}$ and an isomorphism $V_{\ZZ}\simeq \ZZ^{n}$
such that $K\subset \GL_{n}(\widehat{\ZZ})$. Moreover we assume
that $K=\prod_{p}K_{p}$ and that $K$ is neat.
For any algebraic subgroup 
$H$ of $G$, we let $H_{\ZZ}$ (resp. $H_{\ZZ_{p}}$) be the Zariski-closure
of $H$ in $\GL_{n,\ZZ}=\GL(V_{\ZZ})$ (resp. $\GL_{n,\ZZ_{p}}=\GL(V_{\ZZ_{p}})$).

The aim of
this section is to prove the following theorem which 
provides a justification for the seemingly unnatural definition of
$T$-special subvarieties. This result is used crucially in \cite{KY} in the proof 
of the Andr\'e-Oort conjecture under the GRH.

\begin{teo} \label{T-fini}
Assume the GRH for CM fields. Let $M$ be an integer. There exists
a finite set $\{T_{1},\dots,T_{r}\}$ of $\QQ$-tori of $G$ such that each $T_i(\RR)$
is compact 
and having the following property. Let $Z$ be a special subvariety of
$S$ defined by the Shimura subdatum $(H,X_H)$ (with $H$ being the generic Mumford-Tate group on $X_H$)
such that, with notations of \ref{teo4.7}
$$
\max(1, B^{i(T)} |K^m_{T}/K_{T}|) \log |\disc(L_T)| \leq M.
$$
In this last formula we wrote $i(T)$
for the cardinality of the set of primes $p$ such that
$K_{T,p}^{m}\neq K_{T,p}$.

Then $Z$ is a $T_i$-special subvariety for some $i\in \{1,\dots, r\}$.
\end{teo}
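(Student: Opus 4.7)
The plan is to extract from the hypothesis two kinds of control on the connected centre $T$ of the generic Mumford-Tate group $H$: (a) a bound $\log|\disc(L_T)| \leq M$ on the discriminant of the splitting field, and (b) bounds on the integral index $|K^m_T/K_T|$ and on the number $i(T)$ of primes at which it is non-trivial (coming from the factor $B^{i(T)}|K^m_T/K_T|$ in the hypothesis; the degenerate case $L_T=\QQ$ forces $T$ trivial since $T(\RR)$ is compact by Remark \ref{rem2.3}). This reduces the theorem to a finiteness statement for subtori of $G$ subject to these two constraints, to which classical tools together with the cited work of Gille and Moret-Bailly apply.

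First I would combine the bound on $\log|\disc(L_T)|$ with the uniform bound on $[L_T:\QQ]$ obtained in the proof of Lemma \ref{reflex} (the action of $\Gal(\ol\QQ/\QQ)$ on $X^*(T)$ lands in a finite subgroup of $\GL_d(\ZZ)$ with $d$ bounded by the rank of $G$, and such finite subgroups have uniformly bounded order). Hermite's theorem then yields a finite set $\mathcal{F}$ of possible number fields for $L_T$. For each $L\in\mathcal{F}$, the character module $X^*(T)$ is a free $\ZZ$-module of bounded rank equipped with a faithful $\Gal(L/\QQ)$-action, and only finitely many such Galois modules exist up to isomorphism. Hence $T$ ranges, as an abstract $\QQ$-torus, over a finite set of isomorphism classes.

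The main obstacle is the third step: for each fixed $\QQ$-isomorphism class of torus $T_0$ with $T_0(\RR)$ compact, I would show that subtori $T\subset G$ in this class satisfying the integral bounds above fall into finitely many $G(\QQ)$-conjugacy classes. This is precisely what the theorem of Gille and Moret-Bailly \cite{GM-B} provides: finiteness of $G(\QQ)$-conjugacy classes of closed subtori of a reductive $\QQ$-group with prescribed integral structure at each prime. The bound on $|K^m_T/K_T|$ constrains the integral model $T_{\ZZ_p}$ at the finitely many primes where the local index is non-trivial, and at the remaining primes the equality $K_{T,p}=K^m_{T,p}$ fixes $T_{\ZZ_p}$ to be the maximal $\ZZ_p$-model, so the hypotheses of \cite{GM-B} apply.

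Finally, choosing one $G(\QQ)$-representative in each conjugacy class and enlarging to a finite union of $\Gamma$-conjugates (using Lemma \ref{lemmetoto2} together with the coset decomposition of Lemma \ref{lemmetoto3} to pass from $G(\QQ)$-conjugacy to the $\Gamma$-conjugacy relevant for the definition of $\Sigma_T$) produces the desired list $\{T_1,\dots,T_r\}$. For any special subvariety $Z$ satisfying the hypothesis, with defining Shimura datum $(H,X_H)$ and $H$ the generic Mumford-Tate group on $X_H$, the torus $T=Z(H)^0$ coincides with some $T_i$ after the appropriate conjugation, and the definition of $T_i$-Shimura subdatum then exhibits $Z$ as a $T_i$-special subvariety.
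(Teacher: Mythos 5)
Your outline has the right shape at the highest level, but three of the load-bearing steps are either wrong or unjustified, and these are precisely the steps where the paper does genuinely hard work.

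First, the reduction to finitely many $\GL_n(\QQ)$-conjugacy classes. You argue: bounded discriminant plus bounded degree (Hermite) gives finitely many fields $L_T$; Jordan--Zassenhaus gives finitely many $\Gal(L_T/\QQ)$-lattices $X^*(T)$ of bounded rank; hence $T$ lies in finitely many abstract $\QQ$-isomorphism classes; hence finitely many $G(\QQ)$-conjugacy classes of subtori. The last inference is false: two subtori of $\GL_n$ abstractly isomorphic over $\QQ$ can lie in infinitely many $\GL_n(\QQ)$-conjugacy classes, because what determines the conjugacy class is the $\QQ$-representation $T\hookrightarrow\GL_n$, and for a fixed abstract torus there are infinitely many inequivalent faithful $n$-dimensional representations. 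The paper's Lemma \ref{truc5.11} proves the $\GL_n(\QQ)$-conjugacy finiteness by controlling the \emph{representation}, not just the abstract torus: Lemma \ref{basis_characters} shows that, because $T$ is the connected centre of a Mumford--Tate group, the characters of $T$ occurring in the faithful representation have uniformly bounded coordinates (this comes from the Hodge decomposition, where the weights are bounded). That Hodge-theoretic input is essential and is entirely missing from your argument. Attributing this step to Gille--Moret-Bailly is also incorrect; their result is a local-to-global finiteness and only enters later.

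Second, the local analysis. You say the bound on $B^{i(T)}|K^m_T/K_T|$ ``constrains the integral model $T_{\ZZ_p}$ at the finitely many primes where the local index is non-trivial, and at the remaining primes the equality $K_{T,p}=K^m_{T,p}$ fixes $T_{\ZZ_p}$.'' But it is not automatic that there are only finitely many bad primes, nor that the constraint at each bad prime gives local finiteness. The paper proves both. Proposition \ref{p3.15} (the analogue of Edixhoven--Yafaev 4.3.9) shows that at a good prime where the closure of $T$ is not a torus, the local index $|K^m_{T,p}/K_{T,p}|$ is $\geq cp$, hence the bound $\leq M$ forces $K_{T,p}=K^m_{T,p}$ for $p$ large; this is what confines the non-trivial indices to a fixed finite set $S_0$ and also gives local $\GL_n(\ZZ_p)$-conjugacy at $p\notin S_0$ (via \cite{EdYa} lemma 3.3.1). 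At the primes $p\in S_0$, the bound on the local index by itself does \emph{not} determine the $G(\ZZ_p)$-orbit of $T$; the paper needs Clozel's Proposition \ref{clozel}, a genuine analytic/properness input, to conclude that a bounded local index restricts $T$ to finitely many $G(\ZZ_p)$-orbits. Only after these two pieces are in place does Gille--Moret-Bailly (\cite{GM-B}, Cor. 6.4) apply to pass from local to $\GL_n(\ZZ_{S_0})$ and then $\GL_n(\ZZ)$-conjugacy.

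Third, the passage to $\Gamma$-conjugacy. You propose to ``enlarge to a finite union of $\Gamma$-conjugates using Lemma \ref{lemmetoto2} together with the coset decomposition of Lemma \ref{lemmetoto3}.'' Those lemmas explain why a $\Gamma$-list of representatives suffices, but they do not produce one: from a $\GL_n(\ZZ)$-conjugacy class the paper still has to extract finitely many $\Gamma$-orbits, which is Proposition \ref{final}. That proposition requires a further substantial argument introducing the notion of the ``type'' of a torus, again invoking Clozel's Proposition \ref{clozel} and Gille--Moret-Bailly. You should not assume this reduction is formal.

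In short, your Hermite plus Jordan--Zassenhaus preliminary is a correct and pleasant observation (the paper implicitly uses essentially the same facts), but the proof hinges on the Hodge-theoretic bound from Lemma \ref{basis_characters}, the local index lower bound of Proposition \ref{p3.15}, Clozel's properness argument \ref{clozel}, and the ``type'' argument of Proposition \ref{final} — none of which appear in your sketch.
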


\begin{cor}\label{cor3.11}
Assume the GRH for CM fields and let $M$ be an integer.
There exists
a finite set $\{T_{1},\dots,T_{r}\}$ of $\QQ$-tori of $G$
with the following property. Let $Z$ be a special subvariety of $S$.
If the degree of
$\Gal(\oQ/E(G,X))\cdot Z$ is at most $M$, then $Z$ is a $T_{i}$-special
subvariety for some $i\in \{1,\dots,r\}$.
\end{cor}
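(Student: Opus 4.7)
The corollary should follow by combining two results already proved: the lower bound of theorem \ref{teo4.7} provides the translation from a bound on the Galois degree to a bound on the quantity appearing in the hypothesis of theorem \ref{T-fini}, which then finishes the job. First, given any special subvariety $Z$ of $S$, lemma \ref{MTgenerique} lets us write $Z$ as the image of a geometric component $V$ of $\Sh_{K_H}(H,X_H)$ for a Shimura subdatum $(H,X_H)\subset (G,X)$ in which $H$ is the generic Mumford--Tate group on $X_H$. Let $T$ be the connected centre of $H$; this is the torus to which we apply the results of section~2.

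Next I would invoke the version of theorem \ref{teo4.7} stated in the remark (using cor.~5.3.10 of \cite{KY}), applied with $F=E(G,X)$ and $N=1$, which under the GRH gives
$$
\deg_{\cL_K}(\Gal(\oQ/E(G,X))\cdot Z) \;\geq\; c_1 \!\!\prod_{\{p:\,K^m_{T,p}\neq K_{T,p}\}}\!\! \max\!\bigl(1,\,B\,|K^m_{T,p}/K_{T,p}|\bigr)\cdot \log|\disc(L_T)|.
$$
The elementary inequality $\prod_p \max(1,a_p)\ge \max\bigl(1,\prod_p a_p\bigr)$ (valid for any $a_p\geq 0$, since both $1$ and $\prod a_p$ are dominated by $\prod\max(1,a_p)$) together with $\prod_{\{p:\,K^m_{T,p}\neq K_{T,p}\}} |K^m_{T,p}/K_{T,p}| = |K^m_T/K_T|$ and the definition $i(T)=\#\{p:K^m_{T,p}\neq K_{T,p}\}$ yields
$$
\deg_{\cL_K}(\Gal(\oQ/E(G,X))\cdot Z) \;\geq\; c_1\,\max\!\bigl(1,\,B^{i(T)}|K^m_T/K_T|\bigr)\,\log|\disc(L_T)|.
$$

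The hypothesis $\deg(\Gal(\oQ/E(G,X))\cdot Z)\leq M$ thus forces
$$
\max\!\bigl(1,\,B^{i(T)}|K^m_T/K_T|\bigr)\,\log|\disc(L_T)| \;\leq\; M/c_1 \;=:\; M',
$$
which is exactly the hypothesis of theorem \ref{T-fini} with $M$ replaced by $M'$. Applying that theorem produces a finite list of $\QQ$-tori $\{T_1,\dots,T_r\}$ of $G$, each with $T_i(\RR)$ compact and depending only on $M$ and $(G,X,K)$, such that $Z$ is $T_i$-special for some $i\in\{1,\dots,r\}$.

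There is no genuine obstacle in this corollary beyond the two theorems it combines: the work has already been done in theorem \ref{teo4.7} (where the GRH hypothesis enters through proposition \ref{proptoto}) and in theorem \ref{T-fini} (where a key role is played by the Gille--Moret-Bailly result referenced in the introduction). The only mildly delicate point in the writeup is to make the elementary dominance inequality explicit, and to observe that the case of trivial $T$ is automatically covered: then $i(T)=0$, $|K^m_T/K_T|=1$ and $L_T=\QQ$, so the hypothesis of theorem \ref{T-fini} is trivially satisfied and the trivial torus can be taken as one of the $T_i$.
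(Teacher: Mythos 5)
Your proof is correct and follows essentially the same route as the paper: feed the degree hypothesis into Theorem~\ref{teo4.7} (in the form given in the remark, via cor.~5.3.10 of \cite{KY}) to bound $\max(1,B^{i(T)}|K^m_T/K_T|)\log|\disc(L_T)|$, then apply Theorem~\ref{T-fini}. You make two small points explicit that the paper leaves tacit---the passage from $\prod_p\max(1,B|K^m_{T,p}/K_{T,p}|)$ to $\max(1,B^{i(T)}|K^m_T/K_T|)$, and the separate treatment of trivial $T$, where the hypothesis of Theorem~\ref{T-fini} is vacuous and Theorem~\ref{teo4.7} does not apply---but the substance is identical.
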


\begin{proof}
Let  $Z$ be a special subvariety  of
$S$ such that $\deg(\Gal (\oQ/E(G,X)).Z)$ is bounded by $M$.
By \ref{teo4.7}, both $\max(1,B^{i(T)}|K_T^m/K_T|)$ and $\log(|\disc(L_T)|)$ 
are bounded. The conclusion then follows from the theorem \ref{T-fini}.
\end{proof}

\begin{cor}\label{cor3.12}
Assume the GRH for CM fields.
Let $\Sigma = \{ x_i \}$ be an infinite sequence of special points. 
Then the size  $\vert \Gal(\ol\QQ/E(G,X))\cdot x_i\vert $  of the Galois orbit  of $x_{i}$ is unbounded as $x_i$ ranges through $\Sigma$.
\end{cor}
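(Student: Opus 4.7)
I would argue by contradiction. Suppose that the sizes $\vert\Gal(\ol\QQ/E(G,X))\cdot x_i\vert$ remain bounded by some integer $M$ as $x_i$ ranges through $\Sigma$. Since each $x_i$ is a special point (a $0$-dimensional special subvariety), its Galois-orbit degree with respect to $\cL_K$ coincides with the cardinality of its Galois orbit. Thus $\deg(\Gal(\ol\QQ/E(G,X))\cdot x_i)\le M$ for every $i$, and Corollary \ref{cor3.11} provides a finite collection $\{T_1,\dots,T_r\}$ of $\QQ$-subtori of $G$ (with each $T_j(\RR)$ compact) such that every $x_i$ is $T_j$-special for some $j=j(i)\in\{1,\dots,r\}$.

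Next I would translate $T_j$-specialness for a special point into a statement about the Mumford--Tate group. By Lemma \ref{MTgenerique} we may write $x_i$ as coming from a Shimura subdatum $(H,X_H)$ with $H=\MT(X_H)$. Since $x_i$ is a point, $X_H$ is $0$-dimensional and $H$ is a torus, so $Z(H)^0=H$. The condition that $x_i$ is $T_j$-special then forces $\MT(x_i)=H=T_j$. Consequently, each $x_i$ lies in the set of special points whose Mumford--Tate group equals one of the finitely many tori $T_1,\dots,T_r$.

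I would then invoke Lemma \ref{lemmetoto4} applied to $M=T_j$: for each fixed $j$ there are only finitely many $Y$ such that $(T_j,Y)$ is a Shimura subdatum of $(G,X)$. For each such pair $(T_j,Y)$, the associated Shimura variety $\Sh_{K\cap T_j(\AAA_f)}(T_j,Y)$ is a finite set of points, because $Y$ is finite and the double-coset space $T_j(\QQ)\backslash T_j(\AAA_f)/(K\cap T_j(\AAA_f))$ is finite (finiteness of the class group of a $\QQ$-torus). Hence the image of these Shimura varieties in $S$ exhausts only finitely many special points. Putting all $j=1,\dots,r$ together, the set of points that can occur as some $x_i$ is finite, which contradicts the fact that $\Sigma$ is infinite.

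No step is a serious obstacle; the argument is essentially a packaging of Corollary \ref{cor3.11} together with the two finiteness facts already in the paper (Lemma \ref{lemmetoto4} and the finite class number of a $\QQ$-torus). The only point requiring a touch of care is the reduction to $H=\MT(X_H)$ via Lemma \ref{MTgenerique}, which guarantees that ``$T_j$-special'' for a $0$-dimensional subvariety really does pin down the Mumford--Tate group to $T_j$ rather than merely constraining its connected centre.
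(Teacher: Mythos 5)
Your argument is correct and follows the same route as the paper: degree equals cardinality for points, apply Corollary \ref{cor3.11} to get finitely many tori $T_1,\dots,T_r$, and conclude from finiteness of the set of $T_j$-special points. The paper states this last finiteness fact without elaboration; your proposal usefully spells it out (reduction via Lemma \ref{MTgenerique} to $\MT(x_i)=T_j$, then Lemma \ref{lemmetoto4} plus finiteness of $T_j(\QQ)\backslash T_j(\AAA_f)/(K\cap T_j(\AAA_f))$), which is exactly the reasoning the authors had in mind.
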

\begin{proof}
When $x$ is a special point, the degree of its  Galois orbit  is just its size.
Suppose  that $|\Gal(\ol\QQ/E(G,X))\cdot x_i |$ was bounded by  an integer $M$. Then, by \ref{cor3.11}, each $x_i$ would 
be $T_j$-special for some $j\in \{ 1,\dots , r\}$.
But, for a fixed torus $T$ with $T(\RR)$ compact, there are only finitely many $T$-special points. Hence $\Sigma$ is finite.
This contradicts the definition of $\Sigma$ thus proving the corollary.
\end{proof}

We now proceed to prove the theorem \ref{T-fini}. 
Let now $\Sigma_M$ be the set of special subvarieties $Z$ such that
$\max(1,B^{i(T)}|K_T^m/K_T|) \log |\disc(L_T)| \leq M$.
Then both $\max(1,B^{i(T)}|K_T^m/K_T|)$ and $|\disc(L_T)|$ are bounded.
Let $C\simeq H/H^{\der}$ and let $L_{C}$ be the splitting field
of $C$. The discriminant $\vert {\rm disc}(L_{C})\vert = \vert {\rm disc}(L_{T})\vert$ is
bounded when $Z$ varies in $\Sigma_M$. To prove the theorem
\ref{T-fini}, it suffices to consider the set of $Z\in \Sigma_M$
such that the corresponding $L_{C}$ is fixed.

\begin{lem}\label{truc5.11}

(i) Let $E_{0}$ be a number field. 
Let $\TT_{E_{0}}$ be the set of $\QQ$-subtori $T$ of $G$
such that there exists a Shimura subdatum
$(H,X_{H})$ of $(G,X)$ with $H=\MT(X_{H})$ such that $T$ is the connected centre
of $H$ and such that the splitting field of $T$ is $E_{0}$.
Then $\TT_{E_{0}}$
is contained
in a finite union of $\GL_n(\QQ)$-conjugacy classes.

(ii) Let $M$ be an integer.
Let $\TT_M$ be the set of $\QQ$-subtori $T$ of $G$ such that
there exists $Z\in \Sigma_M$ associated with a Shimura subdatum
$(H,X_{H})$ of $(G,X)$ such that $T=Z(\MT(X_{H}))^0$. Then $\TT_M$ is contained
in a finite union of $\GL_n(\QQ)$-conjugacy classes.
\end{lem}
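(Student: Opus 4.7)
The plan is to deduce (ii) from (i) via Hermite's theorem, and to prove (i) by combining Jordan--Zassenhaus with the character bound of Lemma \ref{basis_characters}.

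For (ii), the hypothesis defining $\TT_M$ forces $\log|\disc(L_T)|\leq M$ since $\max(1,B^{i(T)}|K_T^m/K_T|)\geq 1$. The degree $[L_T:\QQ]$ is bounded in terms of $\dim G$ alone: as in the proof of Lemma \ref{reflex}, $[L_T:\QQ]$ is the order of a finite subgroup of $\GL_d(\ZZ)$ for $d=\dim T\leq\dim G$, and such orders are bounded in terms of $d$. Hermite's theorem then produces finitely many number fields $E_0^{(1)},\dots,E_0^{(k)}$ that can occur as $L_T$ for $T\in\TT_M$, so $\TT_M\subseteq\bigcup_j\TT_{E_0^{(j)}}$ and (ii) reduces to (i).

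For (i), fix a splitting field $E_0$. Each $T\in\TT_{E_0}$ has character lattice $X^*(T)$ that is a $\ZZ[\Gal(E_0/\QQ)]$-module of rank $\dim T\leq\dim G$. By the Jordan--Zassenhaus theorem, for the fixed finite group $\Gal(E_0/\QQ)$ and bounded rank, there are only finitely many isomorphism classes of such integral representations. Hence $\TT_{E_0}$ consists of finitely many $\QQ$-isomorphism classes of abstract tori.

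It remains to show that each such $\QQ$-isomorphism class yields only finitely many $\GL_n(\QQ)$-conjugacy classes of embeddings into $\GL_n$. The $\GL_n(\QQ)$-conjugacy class of $T\subset\GL_n$ is determined by the isomorphism class of the pair $(T,\rho_T)$, where $\rho_T\colon T\hookrightarrow\GL(V)$ is the standard $n$-dimensional representation, since representations of a $\QQ$-torus are semisimple and classified by their Galois-stable multisets of $\oQ$-characters. By Lemma \ref{basis_characters}, the characters of $T$ appearing in $\rho_T$ have coordinates uniformly bounded with respect to the canonical basis of $X^*(T_F)$ for an extension $F$ of $\QQ$ of degree bounded by $R$ containing $E_0$. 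Since the lattice $X^*(T_F)$ has bounded rank and only finitely many elements satisfy the uniform bound, we obtain finitely many possible multisets of characters, hence finitely many $\QQ$-isomorphism classes of pairs $(T,\rho_T)$, hence finitely many $\GL_n(\QQ)$-conjugacy classes. The main subtlety is that the canonical basis of $X^*(T_F)$ depends a priori on $T$ and $F$; once $T$ is taken from the finite list provided by Jordan--Zassenhaus and $F$ is fixed (for instance as the Galois closure of $E_0$, whose degree is bounded in terms of $\dim G$), the basis is well defined and the uniform bound of Lemma \ref{basis_characters} applies to give the claimed finiteness.
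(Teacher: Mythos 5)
Your proposal is correct and rests on the same two pillars as the paper's proof: the reduction of (ii) to (i) via boundedness of the discriminant of $L_T$ (the paper implicitly uses Hermite, which you invoke explicitly), and Lemma~\ref{basis_characters} to bound the characters appearing in $\rho_T$ as elements of $X^*(T_F)$. The paper then argues directly: since the multiset of these characters is confined to a finite region of $X^*(T_F)$, it takes only finitely many values, and each choice fixes the $\QQ$-isomorphism class of the representation of $T_F$ (equivalently $\Res_{E_0/\QQ}\GG_m$) on $V$, hence fixes the $\GL_n(\QQ)$-conjugacy class of its image $T = r_T(T_F)$. Your Jordan--Zassenhaus step, which first pins down $T$ as an abstract $\QQ$-torus up to isomorphism, is logically sound but redundant: your subsequent argument never actually uses that the abstract isomorphism class of $T$ is fixed, since the multiset of characters in $X^*(T_F)$ already determines the representation $T_F\to\GL(V)$ and therefore $T$ as its image, all at once. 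One small imprecision: you take $F$ to be the Galois closure of $E_0$, whereas Lemma~\ref{basis_characters} requires $F$ to contain the reflex field $E(H,X_H)$; this is easily repaired by enlarging $F$ to contain both (still of bounded degree), or by noting as the paper implicitly does that the reciprocity map factors through $T_{E_0}$ so the bound descends, but as written your choice of $F$ does not literally match the lemma's hypothesis.
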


\begin{proof}
The assumption of  part (ii) of this lemma implies that the discriminant of
$L_T$ is bounded. For the purpose of proving part (ii) of the lemma
we may assume that $L_{T}$ is fixed. As $L_{T}$ is the splitting field
of $T$ we see that part (ii) is a consequence of part (i) of the lemma.

We now prove the part (i) of the lemma.
Let  $L$ be the torus $\Res_{E_0/\QQ}\GG_{m}$.
As before, we identify $X^*(T)$ with a submodule of $X^*(L)$ via a 
``lifting'' $r_{T}$ of the reciprocity $r_{C}$. 
By the lemma \ref{basis_characters}, there is only a finite number of 
possibilities for the set of characters of $L$ occurring in the representations 
$r_{T}:L\rightarrow T\subset \GL_n$.
Let us fix such a set $\cX$ of characters of $L$ and write
$$
V_{\oQ} = \oplus_{\chi\in \cX} V_{\oQ,\chi}
$$
for the corresponding decomposition of $V_{\oQ}$
such that   for all $\sigma\in \Gal(\oQ/\QQ)$ $\sigma(V_{\oQ,\chi})=V_{\oQ,\chi^{\sigma}}$.
Here the $V_{\chi}$'s are $\oQ$-vector subspaces of $V_{\oQ}$ and we can assume that 
their dimensions are fixed when $T$ varies in $\TT_{E_0}$.  

It follows that the isomorphism class of the representation of the 
$\QQ$-torus $L$ on $V$ is fixed. Therefore the morphism 
$r_{T}$ is contained in a $\GL_{n}(\QQ)$-conjugacy class.
 This finishes the proof of the lemma as
$T=r_{T}(L)$.

\end{proof}

The part (i) of the previous lemma will not be used in this text 
but will play a role in \cite{KY}. For the proof of theorem \ref{T-fini}
we need in fact the following more precise result than part (ii) of lemma \ref{truc5.11}:

\begin{prop}\label{proptoto1}
The set  $\TT_M$ is contained
in a finite union of $\GL_n(\ZZ)$-conjugacy classes.
\end{prop}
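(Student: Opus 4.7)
The plan is to combine part (ii) of Lemma \ref{truc5.11} with a local analysis at each prime and a local-global principle for conjugacy of tori due to Gille and Moret-Bailly \cite{GM-B}.

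\textbf{Step 1: reduction to one rational conjugacy class.} By part (ii) of Lemma \ref{truc5.11}, the set $\TT_M$ is contained in a finite union of $\GL_n(\QQ)$-conjugacy classes. It is therefore enough to fix such a $\GL_n(\QQ)$-conjugacy class $\cC$, fix a base torus $T_0 \in \cC$, and prove that $\cC \cap \TT_M$ consists of finitely many $\GL_n(\ZZ)$-conjugacy classes of tori.

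\textbf{Step 2: control of local conjugacy classes at each prime.} For every $T \in \cC \cap \TT_M$ and every prime $p$, the torus $T$ is $\GL_n(\QQ_p)$-conjugate to $T_0$, hence its closure $T_{\ZZ_p}$ in $\GL(V_{\ZZ_p})$ defines a $\GL_n(\ZZ_p)$-conjugacy class inside the $\GL_n(\QQ_p)$-conjugacy class of $(T_0)_{\ZZ_p}$. I would show, using the hypothesis on $T$, that for all but finitely many primes $p$ this class is forced to be that of $(T_0)_{\ZZ_p}$, and that at the remaining primes only finitely many local classes arise. Concretely, since $K=\prod_p K_p$ with $K_p = \GL_n(\ZZ_p)$ for almost all $p$, one has $K_{T,p} = T(\QQ_p)\cap \GL_n(\ZZ_p) = T(\ZZ_p)$ at almost all $p$, and at such primes good reduction of $T$ forces $K_{T,p}^m = T(\ZZ_p) = K_{T,p}$. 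Consequently the set of primes where $K_{T,p}^m \neq K_{T,p}$ is, up to a set depending only on $T_0$, contained in the (uniformly bounded) set controlled by the inequality $\max(1, B^{i(T)}|K_T^m/K_T|) \le M$; at these primes the index $[K^m_{T,p}:K_{T,p}]$ is uniformly bounded too, which pins down $T_{\ZZ_p}$ up to $\GL_n(\ZZ_p)$-conjugacy to one of finitely many possibilities.

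\textbf{Step 3: local-global passage via Gille--Moret-Bailly.} Having bounded, at each prime, the $\GL_n(\ZZ_p)$-conjugacy class of $T$ in terms of a finite set of possibilities — with only one possibility (that of $T_0$) outside a finite set of primes depending only on the data $(G,X,K,M)$ — I would invoke the main result of \cite{GM-B}. That result provides a local-global principle of the following form: within a fixed $\GL_n(\QQ)$-conjugacy class of $\QQ$-subgroup schemes of $\GL_{n,\QQ}$, the subgroup schemes that are $\GL_n(\ZZ_p)$-conjugate to a prescribed one at every prime form only finitely many $\GL_n(\ZZ)$-conjugacy classes (the discrepancy being measured by a finite double coset in the normalizer). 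Combining Step 2 with this statement, applied to each of the finitely many collections of local conjugacy classes compatible with the constraints, yields that $\cC \cap \TT_M$ is a finite union of $\GL_n(\ZZ)$-conjugacy classes, as required.

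\textbf{Main obstacle.} The only non-formal step is Step 3: the set of global $\GL_n(\ZZ)$-conjugacy classes inside a single $\GL_n(\QQ)$-class can be infinite (it is a double coset space of class-number type), so a priori bounded local data is not enough. The role of the Gille--Moret-Bailly theorem is precisely to control this obstruction by a class-number-type finiteness, and verifying that our setup fits its hypotheses (reductive scheme structure of $T_0$ away from a finite set of primes, correct formulation of the local constraints) will be the delicate point.
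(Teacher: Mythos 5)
Your overall skeleton matches the paper's: reduce to a single $\GL_n(\QQ)$-conjugacy class via Lemma~\ref{truc5.11}(ii), constrain the $\GL_n(\ZZ_p)$-conjugacy class of $T_{\ZZ_p}$ at each prime, and then invoke Gille--Moret-Bailly (\cite{GM-B}, Cor.~6.4) to pass from local to global. Step~1 and Step~3 are sound. The gap is in Step~2, and it is not a verification detail --- it is where the genuinely non-formal content lies.

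First, the causality in your treatment of large primes is reversed. You assert that ``at such primes good reduction of $T$ forces $K^m_{T,p}=T(\ZZ_p)=K_{T,p}$,'' but there is no a priori reason for $T$ to have good reduction at a set of primes that is uniform over $T\in\cC\cap\TT_M$ (the exceptional set for good reduction depends on $T$, not just on $T_0$). What the paper actually does is the opposite direction: Proposition~\ref{p3.15} gives $|K^m_{T,p}/K_{T,p}|\ge cp$ whenever $p$ is unramified in $L_T$, $K_p=G(\QQ_p)\cap\GL_n(\ZZ_p)$, and $K_{T,p}\ne K^m_{T,p}$; combined with the boundedness hypothesis $\prod\max(1,B|K^m_{T,p}/K_{T,p}|)\le M$, this forces $K_{T,p}=K^m_{T,p}$ for all $p$ above a constant $C_0$ that depends only on $(G,X,K,M)$, and one \emph{then} deduces that $T_{\ZZ_p}$ is a torus and hence (via Lemma~3.3.1 of \cite{EdYa}) that $T_{\ZZ_p}=\alpha_p T_{0,\ZZ_p}\alpha_p^{-1}$ with $\alpha_p\in\GL_n(\ZZ_p)$. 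You need this direction of the argument; you cannot start from good reduction.

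Second, and more seriously, your claim ``at these primes the index $[K^m_{T,p}:K_{T,p}]$ is uniformly bounded too, which pins down $T_{\ZZ_p}$ up to $\GL_n(\ZZ_p)$-conjugacy to one of finitely many possibilities'' is precisely the assertion that needs a proof, and you give none. Bounding the index does not formally imply finitely many local conjugacy classes. This is the content of Clozel's Proposition~\ref{clozel}: the function $I(g)=|K^m_T/(T(\QQ_p)\cap g^{-1}K_pg)|$ tends to infinity as $g\to\infty$ in $G(\QQ_p)/Z_G(T_0)(\QQ_p)$, hence its sublevel sets have finite image in $G(\ZZ_p)\backslash G(\QQ_p)/Z_G(T_0)(\QQ_p)$. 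The proof of this uses that the orbit map $g\mapsto gtg^{-1}$ is proper for $t$ regular semisimple (closed orbits), together with a measure-zero argument for the non-regular locus. Without this input, your Step~2 at the finitely many small primes $p\le C_0$ does not go through, and the hypotheses of Gille--Moret-Bailly are not verified.

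So: your plan correctly identifies the reduction and the role of \cite{GM-B}, but it leaves out the two quantitative inputs --- the growth estimate $|K^m_{T,p}/K_{T,p}|\gg p$ (Proposition~\ref{p3.15}) and the properness argument of Proposition~\ref{clozel} --- that are exactly what make the local finiteness non-trivial.
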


We will need a weak version of the 
following result  for the proof of the
proposition \ref{proptoto1}   but
its full strength will be used 
 in \cite{KY}. 
 
 \begin{prop}\label{p3.15}
 There exists a positive constant $c$ with the following property.
 Let $(H,X_{H})$  be a Shimura subdatum of $(G,X)$.
 Let $T$ be the connected centre of $H$. Let $L_{T}=L_{C}$
 be the splitting field of $T$. Let $p$ be a prime which is unramified in 
 $L_{T}$ and such that $K_{p}=\GL_{n}(\ZZ_{p})\cap G(\QQ_{p})$. 
 Assume that 
 $$
 K_{T,p}:=T(\QQ_{p})\cap K\neq K_{T,p}^{m}.
 $$
 Then 
 \begin{equation}
 \vert K_{T,p}^{m}/K_{T,p}\vert \ge cp.
 \end{equation}
  \end{prop}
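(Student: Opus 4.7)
The plan is to translate Proposition~\ref{p3.15} into a purely local comparison between two natural integral models of the unramified torus $T_{\QQ_p}$ and then to exploit the geometry of tori modulo $p$. The key hypothesis is that $p$ is unramified in $L_T$, which guarantees canonical integral structures.

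First, I would note that since $T$ splits over $L_T$ and $p$ is unramified in $L_T$, the torus $T_{\QQ_p}$ splits over an unramified extension of $\QQ_p$. Hence there exists a unique smooth $\ZZ_p$-group scheme of multiplicative type $\widetilde{\mathcal{T}}$ with generic fibre $T_{\QQ_p}$, and $\widetilde{\mathcal{T}}(\ZZ_p) = K_{T,p}^m$ is the (unique) maximal compact subgroup. Let $\mathcal{T}$ denote the schematic closure of $T_{\QQ_p}$ inside $\GL_{n,\ZZ_p}$: it is a flat affine $\ZZ_p$-group scheme satisfying $\mathcal{T}(\ZZ_p) = T(\QQ_p)\cap\GL_n(\ZZ_p)$, which equals $K_{T,p}$ in view of the standing hypothesis $K_p = G(\QQ_p)\cap\GL_n(\ZZ_p)$. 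Since $\mathcal{T}(\ZZ_p)$ is a compact subgroup of $T(\QQ_p)$, the universal property of the smooth $\ZZ_p$-model $\widetilde{\mathcal{T}}$ produces a canonical $\ZZ_p$-morphism $\varphi\colon \mathcal{T} \to \widetilde{\mathcal{T}}$ that is the identity on generic fibres and realises $K_{T,p} \hookrightarrow K_{T,p}^m$ on $\ZZ_p$-points.

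Now assume $K_{T,p}\neq K_{T,p}^m$. Then $\varphi$ is not an isomorphism; since both schemes are flat over $\ZZ_p$ with the same generic fibre, a Nakayama-type argument forces the special fibre $\varphi_{\FF_p}\colon \mathcal{T}_{\FF_p} \to \widetilde{\mathcal{T}}_{\FF_p}$ to fail to be an isomorphism as well. The target is a torus of dimension $d=\dim T$ over $\FF_p$, and the image of $\varphi_{\FF_p}$ on $\FF_p$-points is contained in the image of $\varphi_{\FF_p}$ as a morphism of algebraic groups, which is a proper closed subgroup of $\widetilde{\mathcal{T}}_{\FF_p}$. Since $\widetilde{\mathcal{T}}(\FF_p)$ has order of the form $\prod_i(p^{e_i}\pm 1)$ with $\sum_i e_i = d$ bounded in terms of $\dim G$, any such proper subgroup has index divisible by a factor of order at least $p-1$; combining with the surjectivity of $\widetilde{\mathcal{T}}(\ZZ_p)\twoheadrightarrow\widetilde{\mathcal{T}}(\FF_p)$, this yields $|K_{T,p}^m/K_{T,p}|\geq cp$ for a constant $c$ depending only on $\dim G$.

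\textbf{The main obstacle.} The delicate point is the quantitative step: one must rule out the scenario in which $\varphi_{\FF_p}$, though not an isomorphism of schemes, is nevertheless \emph{surjective on $\FF_p$-points}, so that the whole defect lives at higher congruence levels where, a priori, the cokernel could be $p$-torsion of bounded order independent of $p$. Handling this degenerate case requires a finer structural statement about closed $\ZZ_p$-subgroup schemes of $\GL_{n,\ZZ_p}$ whose generic fibre is an unramified torus --- precisely the sort of result supplied by Gille and Moret-Bailly in \cite{GM-B}, commissioned by the authors for exactly this purpose. Once the local structural input is applied, the uniform bound $|K_{T,p}^m/K_{T,p}|\geq cp$ follows with $c$ depending only on $(G,X)$, and the handful of small primes can be absorbed into the constant.
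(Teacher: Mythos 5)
Your proposal correctly identifies the shape of the question (comparing the closed $\ZZ_p$-model $\mathcal{T}$ coming from the given lattice with the canonical smooth model $\widetilde{\mathcal{T}}$, and exploiting the fact that $p$ is unramified), and it correctly diagnoses the delicate point — that the deficiency of $\mathcal{T}$ might not be visible on $\FF_p$-points. But the resolution you propose is not what makes the argument work, and the attribution is wrong. Corollary~6.4 of Gille--Moret-Bailly \cite{GM-B} is a \emph{local-to-global finiteness} statement (if two flat affine subgroup schemes over a ring of $S$-integers are fppf-locally conjugate, then they lie in finitely many $G(A)$-conjugacy classes); it is used in the paper in the proofs of Lemma~\ref{toto3.12}, Proposition~\ref{proptoto1} and Proposition~\ref{final}, and it has nothing to do with estimating the index $|K_{T,p}^m/K_{T,p}|$ at a single prime. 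It cannot rule out the degenerate scenario you flag, because it says nothing quantitative about the size of the defect at $p$.

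The paper's actual proof of Proposition~\ref{p3.15} takes a different route, following Edixhoven--Yafaev. First, Lemma~\ref{l3.17} produces $\alpha\in\GL_n(\QQ_p)$ so that $T_\alpha=\alpha T\alpha^{-1}$ has schematic closure a torus $T_{\alpha,\ZZ_p}$; under this conjugation, $|K_{T,p}^m/K_{T,p}|$ becomes the size of the orbit of the lattice $\alpha\ZZ_p^n$ under $T_\alpha(\ZZ_p)$. The hypothesis $K_{T,p}\neq K_{T,p}^m$ translates into $T_{\alpha,\ZZ_p}$ \emph{not fixing} that lattice. One then quotes Proposition~4.3.9 of \cite{EdYa}, which gives a lower bound of the order of $p$ for the orbit of an unfixed lattice under an integral torus, provided one has a uniform bound on the component groups of stabilisers of subspaces of $V_{\ol{\FF}_p}$ in the torus. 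That uniform bound is supplied by Lemma~4.4.1 of \cite{EdYa}, and to apply it uniformly as $(H,X_H)$ varies the paper needs Lemma~\ref{basis_characters}: after writing the torus as a quotient $r_{\alpha,\ol{\FF}_p}\colon\GG_{m,\ol{\FF}_p}^r\twoheadrightarrow T_{\alpha,\ol{\FF}_p}$, the characters occurring in the induced representation on $V_{\ol{\FF}_p}$, together with their multiplicities, range over a finite set independent of $(H,X_H)$ and $p$. This is precisely the ingredient that controls the degenerate case and yields a constant $c$ depending only on $(G,X)$; it is missing from your proposal. (A secondary issue: the canonical morphism $\varphi\colon\mathcal{T}\to\widetilde{\mathcal{T}}$ you invoke does not follow from a Néron-type mapping property, since $\mathcal{T}$ need not be smooth; and even when it exists, $\mathcal{T}(\ZZ_p)\to\mathcal{T}(\FF_p)$ need not be surjective without smoothness, so the comparison of $\FF_p$-points does not directly control $|K_{T,p}^m/K_{T,p}|$.)
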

 \begin{proof} This statement is a variant of the proposition 4.3.9 of \cite{EdYa}.
We need to check that the proof can be adapted in our situation.

\begin{lem}\label{l3.16}
The set $\TT(G)$ of tori $T$ in $G$ occurring as  the connected centre of a
reductive subgroup $H$ of $G$ such that there exists
a Shimura subdatum $(H,X_{H})$ of $(G,X)$
is contained in a finite union of 
$\GL_{n}(\oQ)$-conjugacy classes.
\end{lem}
\begin{proof}
By the discussion before   the lemma  \ref{basis_characters}
we may assume that there exists a finite  Galois extension $F$ of 
$\QQ$ such that the isomorphism class $\Delta$ of $\Gal(F/\QQ)$
is fixed as an abstract group and a surjective  map of tori
$$
r_{T}:T_{F}=\Res_{F/\QQ}\GG_{m,F}\rightarrow T
$$
obtained as a lifting of a uniformly bounded power
of the reciprocity morphism $r_C$. 
Then $X^{*}(T_{F})$ has a canonical basis $\cB$ indexed by the elements
of $\Delta$. Let $r$ be the cardinality of $\Delta$.
 We can therefore find a $\oQ$--isomorphism 
 $\GG_{m,\oQ}^{r}\simeq T_{F,\oQ}$
 such that the induced map  $X^{*}(T_{F,\oQ})\rightarrow X^{*}(\GG_{m,\oQ}^{r})$
 transforms the canonical basis $\cB$ of   $X^{*}(T_{F,\oQ})$
 into the canonical basis of $\ZZ^{r}=X^{*}(\GG_{m,\oQ}^{r})$.
 We end up with a representation
 $$
 r_{T,\oQ}:\GG_{m,\oQ}^{r}\rightarrow T_{\oQ}\subset \GL_{n,\oQ}
 $$
of the torus $\GG_{m,\oQ}^{r}$. Using the lemma \ref{basis_characters}
we see that we may assume that the set of characters of $\GG_{m,\oQ}^{r}$
 and their multiplicities occurring in the representation $r_{T,\oQ}$
 are fixed. As a consequence the $\oQ$-isomorphism class of the
 representation $r_{T,\oQ}$ is fixed. As $T_{\oQ}=r_{T,\oQ}(\GG_{m,\oQ}^{r})$
 we see that the tori $T\in \TT(G)$ are contained in a finite union
 of $\GL_{n}(\oQ)$-conjugacy classes. 
\end{proof}

\begin{lem}\label{l3.17}
Let $T$ be a torus in $\TT(G)$. Let
 $r_{T}: T_{F}\rightarrow T$ be as previously.
Let $p$ be a prime which is  unramified in $F$.

There exists $\alpha\in \GL_{n}(\QQ_{p})$  such that the Zariski closure of  
$T_{\alpha}:=  \alpha T\alpha^{-1}$  
 in $\GL_{n,\ZZ_{p}}$ is a torus $T_{\alpha,\ZZ_{p}}$.
In this situation $K_{T_{\alpha},p}=T_{\alpha}(\QQ_{p})\cap \GL_{n}(\ZZ_{p})$ is
the maximal compact open subgroup $K_{T_{\alpha}, p}^{m}$
of $T_{\alpha}(\QQ_{p})$.
 \end{lem}
 \begin{proof}
 We first recall the following facts about models of tori over $\ZZ_{p}$
 mainly due to Tits in the general context of reductive groups. 

 Let $\Lambda$ be a torus in $\GL_{n,\QQ_{p}}$
 and $\Lambda_{\ZZ_{p}}$ be its Zariski closure in
 $\GL_{n,\ZZ_{p}}$. Then $K_{\Lambda,p}:=\Lambda(\ZZ_{p})=\Lambda(\QQ_{p})\cap \GL_{n}(\ZZ_{p})$
 is a compact open subgroup of $\Lambda(\QQ_{p})$.
 If $\Lambda_{\ZZ_{p}}$ is a torus, then $K_{\Lambda,p}=\Lambda(\ZZ_{p})$  is 
the maximal hyperspecial subgroup  $K_{\Lambda,p}^m$ of $\Lambda(\QQ_{p})$ (\cite{Ti}, 3.8.1).
Conversely
if $K_{\Lambda,p}^{m}=K_{\Lambda,p}$ is a maximal hyperspecial subgroup of $\Lambda(\QQ_{p})$
then $\Lambda_{\ZZ_{p}}$ is a torus over $\ZZ_{p}$ (\cite{Ti}, 3.8.1).
 Note also that if 
the splitting field of $\Lambda$  is an unramified extension of $\QQ_{p}$
  then $K_{\Lambda,p}^{m}$ is a hyperspecial subgroup of $\Lambda(\QQ_{p})$
(\cite{Ti}, 3.8.2).

 As $p$ is unramified in $F$  and as $r_{T}:T_{F}\rightarrow T$ is surjective,
  $p$ is unramified in the splitting field of $T$.  
The  maximal open compact subgroups of $\GL_{n}(\QQ_{p})$ are conjugate
under $\GL_{n}(\QQ_{p})$ and any compact subgroup
 of $\GL_{n}(\QQ_{p})$
is contained in a maximal open compact subgroup of $\GL_{n}(\QQ_{p})$ (see \cite{PlaRa} 3.3 p. 134).
Therefore there exists a maximal compact open subgroup $\alpha^{-1}\GL_n(\ZZ_p)\alpha$ of $\GL_n(\QQ_p)$ for some
$\alpha\in \GL_{n}(\QQ_{p})$ 
such that $T(\QQ_p)\cap \alpha^{-1}\GL_n(\ZZ_p)\alpha = K^m_{T,p}$. Let $T_{\alpha}=\alpha T \alpha^{-1}$, and $T_{\alpha,\ZZ_{p}}$ its Zariski closure.
Then $T_{\alpha}(\ZZ_{p})$ is the maximal compact subgroup of $T_{\alpha}(\QQ_{p})$ and is hyperspecial.
The previous discussion shows that $T_{\alpha,\ZZ_{p}}$ is a torus.
 \end{proof}  
 
 We may now prove the proposition \ref{p3.15}.
 Let $p$ be a prime which is unramified in $F$. 
 Let 
 $$
 r_{\alpha}: T_{F,\QQ_{p}}\longrightarrow T_{\alpha}
 $$
be the map $r_{\alpha}=\alpha r_{T}\alpha^{-1}$.
The torus $T_{F,\QQ_{p}}$ is the generic fibre of a torus 
$T_{F,\ZZ_{p}}$ over $\ZZ_{p}$ (see \cite{Vo},  10.3 thm 2)
and the map $r_{\alpha}$ extends uniquely over $\ZZ_{p}$
as a map of algebraic tori
$$
r_{\alpha,\ZZ_{p}}: T_{F,\ZZ_{p}}\longrightarrow T_{\alpha,\ZZ_{p}}\subset \GL_{n,\ZZ_{p}}=\GL(V_{\ZZ_{p}}).
$$
Taking the special fibres we get  over the residue field $\FF_{p}$ of $\ZZ_{p}$
a map
$$
r_{\alpha,\FF_{p}}: T_{F,\FF_{p}}\longrightarrow T_{\alpha,\FF_{p}}\subset \GL_{n,\FF_{p}}=\GL(V_{\FF_{p}}).
$$
Passing to the algebraic closure $\overline{\FF}_{p}$ of $\FF_{p}$
we get a map
$$
r_{\alpha,\overline{\FF}_{p}}: T_{F,\overline{\FF}_{p}}\longrightarrow T_{\alpha,\overline{\FF}_{p}}\subset \GL_{n,\overline{\FF}_{p}}=
\GL(V_{\overline{\FF}_{p}}).
$$ 
Using the lemma 4.1 of  Expos\'e X of \cite{Dem1}, we see that there is a canonical
isomorphism between $X^{*}(T_{F,\FF_{p}})$ and $X^{*}(T_{F})$
and by our previous discussion we get a canonical basis $\cB$ of  $X^{*}(T_{F,\FF_{p}})$.
As in the proof of the lemma \ref{l3.16} we have an isomorphism of tori over $\overline{\FF}_{p}$
between $\GG_{m,\overline{\FF}_{p}}^{r}\simeq T_{F,\overline{\FF}_{p}}$
such that the associate map on the character groups send the canonical basis $\cB$
on the canonical basis of $\ZZ^{n}=X^{*}(\GG_{m,\overline{\FF}_{p}}^{r})$.
Composing this isomorphism with $r_{\alpha,\overline{\FF}_{p}}$
we end up with a representation
$$
r_{\alpha,\overline{\FF}_{p}}:\GG_{m,\overline{\FF}_{p}}^{r}\longrightarrow T_{\alpha,\overline{\FF}_{p}}\subset \GL_{n,\overline{\FF}_{p}}  = \GL(V_{\overline{\FF}_{p}}).
$$
Using the lemma \ref{basis_characters} as in the proof of the lemma \ref{l3.16}
we may assume that the characters of $\GG_{m,\overline{\FF}_{p}}^{r}$ and their multiplicities
in the representation $r_{\alpha,\overline{\FF}_{p}}$ are fixed. 

By the lemma 4.4.1 of \cite{EdYa} there is a positive integer $C_{1}$
independent of $(H,X_{H})$ and $p$ such that for all subspaces
$W$ of  $V_{\overline{\FF}_{p}}$ the group of connected components
of the stabiliser of $W$ in $\GG_{m,\overline{\FF}_{p}}^{r}$
is of order bounded by $C_{1}$. As the map $r_{\alpha,\overline{\FF}_{p}}$
is surjective, the group of connected components
of the stabiliser of $W$ in $T_{\alpha,\overline{\FF}_{p}}$ is also 
of cardinality uniformly bounded by $C_{1}$.

Assume now that $K_{p}=G(\QQ_{p})\cap \GL_{n}(\ZZ_{p})$,
then $K_{T,p}=T(\QQ_{p})\cap \GL_{n}(\ZZ_{p})$.
If $K_{T,p}\neq K_{T,p}^{m}$ 
the Zariski closure $T_{\ZZ_p}$ of $T_{\QQ_{p}}$
in $\GL_{n,\ZZ_{p}}$ is not a torus.

The conjugation morphism $x \mapsto \alpha x \alpha^{-1}$ establishes
a bijection between $K^m_{T,p} / K_{T,p}$ and
$K^m_{T_{\alpha},p} / T_{\alpha}(\QQ_p)\cap \alpha \GL_n(\ZZ_p) \alpha^{-1}$
where $K^m_{T_{\alpha},p}$ is the maximal compact open subgroup of 
$T_{\alpha}(\QQ_p)$.
This last index is the size of the orbit $T_{\alpha}(\ZZ_p) \cdot \alpha \ZZ_p^n$.
The fact that the Zariski closure $T_{\ZZ_p}$ of $T_{\QQ_{p}}$
in $\GL_{n,\ZZ_{p}}$ is not a torus implies that $T_{\alpha,\ZZ_p}$
does not fix the lattice $\alpha \ZZ_p^n$ in the sense of \cite{EdYa}, section 3.3.

In view of  the previous result on the size of the group of connected
components of stabilisers of subspaces of $V_{\overline{\FF}_{p}}$
the proof of 
the proposition 
4.3.9 of \cite{EdYa} implies that this index is
at least
a uniform constant times $p$.

\end{proof}

Fix a torus $T_{0}\in \TT_{M}$ and let ${\cal D}(T_{0})$  be the
set  of tori in $G$ contained in the $\GL_{n}(\QQ)$-conjugacy class of $T_{0}$.
To prove the proposition \ref{proptoto1}, we will analyse the variation of
   $B^{i(T)} \cdot |K^m_{T}/K_{T}|$ as $T$ ranges through ${\cal D}(T_{0})$.

\begin{lem}\label{toto3.12}
For all $T\in {\cal D}(T_{0})$
we have the lower bound
$$
\prod_{\{p: K^m_{T,p}\not= K_{T,p}\}} \max(1, B |K^m_{T,p}/K_{T,p}|) \gg 
\prod_{\{p : K^m_{T,p}\not= K_{T,p}\} } c p
$$
where $c$ is a uniform constant. 

Let $M$ be an integer.
There exists an integer $C_0>0$ such that the following holds. Let $S_0$ be the set of primes $p< C_0$
and let $\ZZ_{S_{0}}$ be the ring of $S_{0}$-integers.
The set  ${\cal D}(T_{0})\cap \TT_{M}$  is contained in a finite union of $\GL_n(\ZZ_{S_0})$-conjugacy classes.
\end{lem}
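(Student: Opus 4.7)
\emph{Proof plan.} The first inequality follows from Proposition \ref{p3.15}. Because every $T \in {\cal D}(T_0)$ shares the same splitting field $L_{T_0}$, the set $S_1$ of primes ramifying in $L_T$ is a fixed finite set. Moreover, since $K = \prod_p K_p$ is an open compact subgroup contained in $\GL_n(\widehat{\ZZ})$, there is only a finite set $S_2$ of primes with $K_p \neq G(\QQ_p)\cap \GL_n(\ZZ_p)$. For each prime $p \notin S_1 \cup S_2$ satisfying $K^m_{T,p} \neq K_{T,p}$, Proposition \ref{p3.15} yields $|K^m_{T,p}/K_{T,p}|\geq c_0 p$ for a uniform constant $c_0$; for $p$ large enough this gives $\max(1, B|K^m_{T,p}/K_{T,p}|) \geq Bc_0 p$, while the finitely many exceptional primes contribute a factor $\geq 1$. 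Taking the product and absorbing the finite-prime contributions into the constant yields the first claim.

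For the second claim, set $I_T = \{p : K^m_{T,p}\neq K_{T,p}\}$. The two products $\prod_{p\in I_T}\max(1,B|K^m_{T,p}/K_{T,p}|)$ and $B^{i(T)}|K^m_T/K_T| = \prod_{p\in I_T} B|K^m_{T,p}/K_{T,p}|$ differ only at primes $p \in I_T$ with $B|K^m_{T,p}/K_{T,p}| < 1$; by Proposition \ref{p3.15} such primes lie in the fixed finite set $S_1 \cup S_2 \cup \{p < 1/(Bc_0)\}$, so the two quantities agree up to a uniform multiplicative constant. Since $L_T = L_{T_0}$ is fixed, the $\TT_M$-hypothesis supplies a uniform upper bound on $B^{i(T)}|K^m_T/K_T|$, and combining this with the first claim shows that $\prod_{p\in I_T} p$ is uniformly bounded as $T$ varies in ${\cal D}(T_0)\cap \TT_M$. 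Choosing $C_0$ larger than this bound and larger than $\max(S_1 \cup S_2)$ and setting $S_0 := \{p : p < C_0\}$, we conclude that for every $T \in {\cal D}(T_0)\cap\TT_M$ and every $p \notin S_0$ one has $K^m_{T,p} = K_{T,p}$.

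For each $p \notin S_0$, the discussion surrounding Lemma \ref{l3.17} shows that the equalities $K^m_{T,p} = K_{T,p}$ and $K_p = G(\QQ_p)\cap \GL_n(\ZZ_p)$, together with $p$ being unramified in $L_T$, force the Zariski closure $T_{\ZZ_p}$ of $T$ in $\GL_{n,\ZZ_p}$ to be a torus; the same holds for $T_0$, after possibly enlarging $C_0$. Thus at each $p \notin S_0$ the tori $T$ and $T_0$ admit torus models over $\ZZ_p$ in $\GL_{n,\ZZ_p}$ that are $\GL_n(\QQ_p)$-conjugate. A standard result on maximal tori of reductive group schemes over local Dedekind rings (smoothness of the transporter scheme together with Hensel's lemma applied to the normalizer) then shows that two such integral models are $\GL_n(\ZZ_p)$-conjugate, and that for all but finitely many primes this $\GL_n(\ZZ_p)$-conjugacy class is unique.

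The remaining difficulty, and the main obstacle in the proof, is to package these pointwise local conjugacies into a global $\GL_n(\ZZ_{S_0})$-conjugacy statement producing only finitely many classes. This is where the result of Gille and Moret-Bailly \cite{GM-B} intervenes: applied to the $\GL_n$-scheme parametrising tori in a fixed $\GL_n(\QQ)$-conjugacy class, it guarantees that the set of tori in ${\cal D}(T_0)\cap \TT_M$ falls into finitely many $\GL_n(\ZZ_{S_0})$-orbits, completing the proof.
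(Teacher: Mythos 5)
Your overall architecture matches the paper's proof: use Proposition \ref{p3.15} to force $K^m_{T,p}=K_{T,p}$ for $p$ outside a fixed finite set, then reduce each such $T_{\ZZ_p}$ to a torus and conjugate it to $T_{0,\ZZ_p}$ over $\ZZ_p$, and finally invoke Corollary 6.4 of \cite{GM-B} for global finiteness. The constant-wrangling in your second paragraph (comparing $\prod_p\max(1,B|K^m_{T,p}/K_{T,p}|)$ with $B^{i(T)}|K^m_T/K_T|$ up to a bounded factor) is sound and, if anything, more explicit than the paper's.

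The weak point is the local conjugacy step. You assert that for $p\notin S_0$ the two integral tori $T_{\ZZ_p}$ and $T_{0,\ZZ_p}$ are $\GL_n(\ZZ_p)$-conjugate by ``smoothness of the transporter scheme together with Hensel's lemma applied to the normalizer'' and by a ``standard result on maximal tori.'' This is not justified as stated. First, $T$ is typically \emph{not} a maximal torus of $\GL_n$, so results about maximal tori of reductive group schemes do not directly apply. Second, the transporter is a torsor under the normalizer $N_{\GL_n}(T_{0,\ZZ_p})$, which is smooth and affine but not proper; having a $\QQ_p$-point therefore does not yield a $\ZZ_p$-point by Hensel/valuative-criterion considerations. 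What one actually needs is to show that the class of this torsor in $H^1(\ZZ_p, N)$ is trivial, and that is not automatic: the normalizer has a nontrivial finite component group, and the injectivity of $H^1(\ZZ_p,N)\to H^1(\QQ_p,N)$ is precisely the kind of nontrivial statement that requires an argument. The paper sidesteps all of this by invoking a concrete, purpose-built fact -- lemma 3.3.1 of \cite{EdYa} -- which says that if the torus $T$ fixes both the lattice $\ZZ_p^n$ (because $T_{\ZZ_p}$ is a torus) and the lattice $g\ZZ_p^n$ (because $T=gT_0g^{-1}$ and $T_{0,\ZZ_p}$ is a torus), then $g=c\alpha_p$ with $c\in Z_{\GL_n}(T)(\QQ_p)$ and $\alpha_p\in\GL_n(\ZZ_p)$, whence $T_{\ZZ_p}=\alpha_p T_{0,\ZZ_p}\alpha_p^{-1}$. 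You should replace your heuristic appeal to transporter smoothness by this lemma (or supply a genuine argument that the relevant $H^1$ kernel vanishes for $p$ large); otherwise the input to Gille--Moret-Bailly is not established.
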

\begin{proof}
Let $p$ be a prime such that 
 $p$ is unramified in $L_T$, such that
 $K_p$ is $G(\ZZ_p)$
  for the $\ZZ_p$-structure given by our fixed
representation of $G$  and such
that $T_{0,\ZZ_{p}}$ 
is a torus. These conditions are verified for almost all $p$.

Let $T\in {\cal D}(T_{0})$ be
 such that $K_{T,p}^m\neq K_{T,p}$. 
 By the proposition \ref{p3.15},
 we have the lower bound
 $$
 \vert K_{T,p}^{m}/K_{T,p}\vert \ge cp.
 $$

 Therefore, there exists  an
  integer $C_{0}$ such that  for all $T\in {\cal D}({T_{0}})\cap \TT_{M}$
 and all primes $p>C_{0}$, $K_{T,p}=K_{T,p}^m$ and $K_{T,p}^{m}$ is hyperspecial. 
 Let  $T\in  {\cal D} (T_{0})\cap \TT_{M}$ and $p>C_{0}$, then  $T_{\ZZ_{p}}$ is a torus.

 Let $g\in \GL_{n}(\QQ)$ such that $T=gT_{0}g^{-1}$. The previous discussion
 shows that $T_{\ZZ_p}$ fixes the lattice $g \ZZ_p^n$.
 By (\cite{EdYa} lemma 3.3.1), there exists $c\in Z_{\GL_{n}}(T)(\QQ_{p})$ 
 and $\alpha_{p}\in \GL_{n}(\ZZ_{p})$ such that $g=c\alpha_{p}$.
 Therefore $T_{\ZZ_{p}}=\alpha_{p} T_{0,\ZZ_{p}}\alpha_{p}^{-1}$ for some
 $\alpha_{p}\in \GL_{n}(\ZZ_{p})$.

By the Corollary 6.4 of \cite{GM-B} the set ${\cal D}({T_{0}})\cap \TT_{M}$ is contained in finitely many
$\GL_{n}(\ZZ_{S_{0}})$-conjugacy classes.

 \end{proof}

The proposition
\ref{proptoto1} will follow from the following proposition whose
proof was communicated to us by Laurent Clozel.

\begin{prop}(Clozel)\label{clozel}
     Let $G$ be a reductive group over $\QQ_{p}$, $T\subset G$ a non trivial torus
     and let $H=Z_{G}(T)$. Let $K$ be a fixed compact open subgroup of
     $G(\QQ_{p})$ and let $K_{T}=K_T^m$ be the maximal compact subgroup of
$T(\QQ_{p})$. The function
$$
I(g)=\vert K_{T}/T(\QQ_p)\cap g^{-1}Kg\vert \rightarrow \infty
$$
as $g\rightarrow \infty$ in $G(\QQ_{p})/H(\QQ_{p})$ (where a basis of neighborhoods
of $\infty$ is given by the complements of compact subsets of $G(\QQ_{p})/H(\QQ_{p})$) .    
Let $W$ be a
set of $g\in G(\QQ_p)/H(\QQ_p)$ such that $I(g)$ is bounded. The
image of $W$ in $G(\ZZ_p) \backslash  G(\QQ_p) /H(\QQ_p)$ is
finite.
\end{prop}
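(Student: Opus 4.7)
The plan is to derive both statements from a single claim: for any $M \ge 0$, the set $W_M = \{ g \in G(\QQ_p)/H(\QQ_p) : I(g) \le M \}$ is relatively compact in $G(\QQ_p)/H(\QQ_p)$. The first assertion is then just the contrapositive. For the second, $G(\ZZ_p)$ is open in $G(\QQ_p)$, so its orbits on $G(\QQ_p)/H(\QQ_p)$ are open, and consequently $G(\ZZ_p)\backslash G(\QQ_p)/H(\QQ_p)$ carries the discrete topology; any compact subset has finite image there. To start the reduction: since $g^{-1}Kg$ is compact open in $G(\QQ_p)$, the intersection $T(\QQ_p) \cap g^{-1}Kg$ is an open subgroup of $K_T$ of finite index, which is bounded by $M$ whenever $g \in W_M$. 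Because $K_T$ is a compact abelian $p$-adic Lie group, any open subgroup of index $\le M$ contains the (open, finite-index) subgroup $K_T^M$ of $M$-th powers, and there are only finitely many such subgroups. Thus $W_M$ is a finite disjoint union $W_M = \bigsqcup_i W_M^{(i)}$, where on $W_M^{(i)}$ the subgroup $T(\QQ_p) \cap g^{-1}Kg$ equals a fixed open subgroup $K'_{T,i} \subset K_T$.

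Fix one piece and write $K_T'$ for the associated subgroup. The heart of the argument is to select a suitable $t \in K_T'$ and then exploit the orbit of $t$ under $G$-conjugation. Let $\cR \subset X^*(T)$ be the finite set of non-trivial weights of $T$ on $\lie(G)/\lie(H)$; the regular locus $T^{\rm reg} = \{t \in T : \alpha(t) \ne 1 \text{ for all } \alpha \in \cR\}$ is a non-empty Zariski-open subvariety of $T$, and for $t \in T^{\rm reg}$ one has $Z_G(t) = H$. Since $K_T'$ is $p$-adically open and non-empty, it is Zariski-dense in $T$, so one can pick $t \in K_T' \cap T^{\rm reg}(\QQ_p)$. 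The inclusion $K_T' \subset g^{-1}Kg$ then yields $gtg^{-1} \in K$ for every $gH \in W_M^{(i)}$. Consider the orbit map
\[
\phi \colon G/H \lto G, \qquad gH \mapsto g t g^{-1}.
\]
As $t$ lies in a torus it is semisimple, and since $Z_G(t) = H$ the map $\phi$ identifies $G/H$ with the conjugacy class $G \cdot t$. By the classical fact that semisimple conjugacy classes in a reductive group are Zariski-closed, $\phi$ is a closed immersion of $\QQ_p$-varieties. Taking $\QQ_p$-points and restricting to the clopen subset $G(\QQ_p)/H(\QQ_p) \subset (G/H)(\QQ_p)$, we obtain a proper, continuous, injective map of locally compact spaces. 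In particular $\phi^{-1}(K)$ is compact in $G(\QQ_p)/H(\QQ_p)$, and by construction it contains $W_M^{(i)}$.

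Summing over the finitely many pieces, $W_M$ is contained in a finite union of compact sets, hence is relatively compact; this proves the claim and both parts of the proposition. The main technical pressure point is the properness in the orbit step: one must convert the algebraic closedness of the semisimple conjugacy class $G \cdot t$ into a topological properness statement for the $p$-adic orbit map, and for this to be usable one needs a regular $t$ actually living inside the prescribed open subgroup $K_T'$ of $K_T$, which is why the preliminary Zariski-density argument is indispensable.
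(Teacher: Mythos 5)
Your proof is correct, and it reaches the conclusion by a slightly different route than the paper's. Both arguments share the decisive observation: for a regular semisimple $t \in T(\QQ_p)$ (meaning $Z_G(t) = H$), the orbit map $gH \mapsto gtg^{-1}$ is a proper map from $G(\QQ_p)/H(\QQ_p)$ onto the closed orbit of $t$, so $gtg^{-1}$ eventually leaves any compact set $K$. Where they differ is in how the regular element is fed into the argument. The paper's proof (attributed to Clozel) avoids choosing $t$ at all: it rewrites $1/I(g)$ as $\int_{K_T}\mathbf{1}_K(gtg^{-1})\,d\mu_T$ and invokes dominated convergence, using that the singular locus $K_T \setminus T^{\rm reg}$ has $\mu_T$-measure zero (this is precisely the content of the final lemma, citing \cite{PlaRa} 2.1.11). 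Your proof instead makes a finitary selection: you observe that $T(\QQ_p) \cap g^{-1}Kg$ ranges over finitely many open subgroups of bounded index in $K_T$, pick a regular $t$ inside each piece via Zariski-density of $p$-adic open sets, and then apply properness uniformly on each piece. This avoids measure theory entirely and proves directly the relative compactness of $\{I \leq M\}$, which cleanly yields both assertions of the proposition (the paper's measure-theoretic argument shows $I(g)\to\infty$ and leaves the finiteness claim as an immediate but unspelled consequence). The trade-off is that your version needs the structural finiteness lemma for open subgroups of $K_T$, while Clozel's needs the measure-zero statement for the singular locus; they are roughly of the same depth.

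One small inaccuracy worth fixing: an open subgroup $U \subset K_T$ of index $n \leq M$ contains $K_T^n$ (since the quotient has exponent dividing $n$), but it need not contain $K_T^M$ unless $n \mid M$. Replace $K_T^M$ by $K_T^{M!}$ (or $K_T^{\operatorname{lcm}(1,\dots,M)}$): this is still open of finite index because $K_T$ is a compact, topologically finitely generated abelian $p$-adic Lie group, and it lies in every open subgroup of index $\leq M$, giving the desired finiteness. In fact, once you have this, you need not even partition $W_M$ into pieces: a single $t \in K_T^{M!}\cap T^{\rm reg}(\QQ_p)$ works simultaneously for all $g\in W_M$.
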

\begin{proof}
As $T(\QQ_{p})\cap g^{-1}Kg$ is a compact open subgroup of
$T(\QQ_p)$, $T(\QQ_{p})\cap g^{-1}Kg$ is contained in $K_{T}$. For
$g\in G(\QQ_{p})$ and $h\in H(\QQ_{p})$ we find that
$$
T(\QQ_{p})\cap h^{-1}g^{-1}Kgh= h^{-1}(hT(\QQ_{p})h^{-1}\cap
g^{-1}Kg)h=
$$
$$
=h^{-1}(T(\QQ_{p})\cap g^{-1}Kg)h=T\cap g^{-1}Kg
$$
as $h$ commutes with $T$. So $I(g)$ is well defined on
$G(\QQ_{p})/H(\QQ_{p})$.

Let ${\bf 1}_{K}$ be the characteristic function of $K$ on
$G(\QQ_{p})$.  Let $\mu_{T}$ be the normalized measure on $K_{T}$.
Then $I(g)\rightarrow \infty$ if and only if
$$
\int_{K_{T}} {\bf 1}_{K}(gtg^{-1})\ d\mu_{T}\longrightarrow 0.
$$
We just have to prove that for $t$ outside a subset of $K_{T}$ of
$\mu_{T}$-measure $0$:
$$
{\bf 1}_{K}(gtg^{-1})\rightarrow 0.
$$

Let $T^{reg}\subset T(\QQ_{p})$ be the set
$$
T^{reg}=\{ t\in T(\QQ_{p})\ \vert\ Z_{G}(t)=Z_{G}(T)=H\}.
$$
For $t\in T^{reg}$ we have a homeomorphism
$$
\pi_{t}: \ G(\QQ_{p})/H(\QQ_{p})\rightarrow O(t)
$$
$$
g\mapsto gtg^{-1}
$$
where $O(t)$ denotes the orbit of $t$ under $G(\QQ_{p})$. As $t$
is semisimple this orbit is closed and the map $\pi_{t}$ is
proper. In this way we get that for $g\rightarrow \infty$ ${\bf
1}_{K}(gtg^{-1})=0$. So the following lemma finishes the proof of
the proposition.
\end{proof}

\begin{lem}
     The set of $t\in K_{T}$ such that $t\notin T^{reg}$ is of
     $\mu_{T}$-measure $0$.
\end{lem}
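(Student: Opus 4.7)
The plan is to realise $T\setminus T^{reg}$ as a finite union of proper Zariski-closed subvarieties of the algebraic torus $T$, and then invoke the standard fact that proper closed analytic subsets of the compact $p$-adic Lie group $K_T$ have Haar measure zero.

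First, I would embed $T$ into a maximal torus $T'$ of $G_{\overline{\QQ_{p}}}$ and consider the root system $\Phi$ of $G_{\overline{\QQ_{p}}}$ relative to $T'$. For $t\in T$ the Lie algebra of $Z_G(t)$ is the sum of $\mathrm{Lie}(T')$ and the root spaces $\mathfrak{g}_\alpha$ for those $\alpha\in\Phi$ with $\alpha(t)=1$, while $\mathrm{Lie}(H)=\mathrm{Lie}(Z_G(T))$ is the sum of $\mathrm{Lie}(T')$ and the $\mathfrak{g}_\alpha$ for those $\alpha$ with $\alpha|_{T}=1$. Hence $Z_G(t)=H$ holds exactly when $\alpha(t)\ne 1$ for every root $\alpha$ whose restriction $\alpha|_{T}$ is non-trivial. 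Writing $\Phi_T$ for the finite set of such roots, this gives the inclusion $T\setminus T^{reg}\subset \bigcup_{\alpha\in\Phi_T}\ker(\alpha|_{T})$, and each $\ker(\alpha|_{T})$ is a proper closed algebraic subgroup of $T$, of codimension at least one.

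Second, I would conclude by measure-theoretic reasoning: for each $\alpha\in\Phi_T$ the set $\ker(\alpha|_{T})(\QQ_{p})\cap K_T$ is a closed $p$-adic analytic submanifold of $K_T$ of strictly positive codimension. Since $\mu_T$ is, in any local chart on the $p$-adic Lie group $K_T$, absolutely continuous with respect to Lebesgue measure on $\QQ_{p}^{\dim T}$, any such proper analytic subset is $\mu_T$-null; a finite union of null sets remains null.

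The only mildly delicate point is the passage from the equality of Lie algebras to the equality of group schemes $Z_G(t)=H$, since \emph{a priori} one only obtains agreement of neutral components. This is handled by noting that $T$ and $H$ are connected and that, after possibly enlarging the exceptional locus by finitely many additional proper subvarieties $\ker(\chi|_T)$ corresponding to the characters of $T$ on the component group of $Z_G(t)$, one recovers the equality on the nose; these additional subvarieties are again null for $\mu_T$, so the argument goes through unchanged.
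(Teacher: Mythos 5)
Your overall strategy is sound and it provides a genuine argument where the paper itself only writes one sentence, citing Platonov--Rapinchuk (\cite{PlaRa}, 2.1.11) for the fact that the non-regular locus is Zariski-thin. So the comparison is between a citation and a worked-out proof, and the content of your argument is essentially the content behind that citation: the complement of $T^{reg}$ in $T$ is contained in a proper Zariski-closed subset, and the $\QQ_{p}$-points of such a subset have Haar measure zero in $K_T$. Both your Lie-algebra computation for the root condition $\alpha(t)\ne 1$ ($\alpha|_T\ne 1$) and your measure-theoretic conclusion (local absolute continuity of Haar measure with respect to Lebesgue measure, so proper analytic subsets are null) are correct.

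The one place where you are hand-waving is the passage from $Z_G(t)^{0}=H$ to $Z_G(t)=H$. You correctly flag it, but the phrase ``characters of $T$ on the component group of $Z_G(t)$'' does not quite parse: the component group does not carry a natural character of $T$. The precise extra conditions you need are on the Weyl group, not on characters per se. For $t\in T$ satisfying your root condition one has $Z_G(t)^{0}=H$, and then $Z_G(t)/Z_G(t)^{0}\cong W_t/W_t^{0}$ where $W_t=\{w\in W(G,T')\colon w(t)=t\}$ and $W_t^{0}=\langle s_\alpha\colon\alpha(t)=1\rangle=\langle s_\alpha\colon\alpha|_T=1\rangle$. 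Since $H=Z_G(T)$ is connected reductive, $W(H,T')=\langle s_\alpha\colon\alpha|_T=1\rangle=\{w\colon w|_T=\mathrm{id}\}$, so $W_t^{0}=\{w\colon w|_T=\mathrm{id}\}$. Thus $Z_G(t)=H$ if and only if, in addition, no $w$ with $w|_T\ne\mathrm{id}$ fixes $t$. The extra loci to excise are therefore the finitely many fixed-point sets $\{t\in T\colon w(t)=t\}$ for $w$ with $w|_T\ne\mathrm{id}$, each of which is a proper closed algebraic subgroup of $T$ (it can indeed be written as an intersection of kernels of characters $\chi\cdot(w^{*}\chi)^{-1}|_T$, which is presumably what you meant). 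These loci are genuinely needed when $G^{\der}$ is not simply connected (e.g.\ already in $\PGL_n$, $n\ge 3$, a maximal torus contains regular elements with disconnected centraliser), so you cannot omit them in the generality of the proposition. With this clarification your proof is complete.

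One final small remark: a root $\alpha$ or a Weyl element $w$ may not be defined over $\QQ_p$, so $\ker(\alpha|_T)$ and $\{t\colon w(t)=t\}$ are \emph{a priori} only $\overline{\QQ_p}$-subvarieties. But the full excised locus (union over all roots and all relevant $w$) is Galois-stable, hence descends to a proper $\QQ_p$-closed subset of $T$, and the measure-zero conclusion applies to its $\QQ_p$-points. This is implicit in your write-up and worth making explicit.
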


\begin{proof}
This last lemma is a consequence of \cite{PlaRa},  2.1.11.
\end{proof}

We can now finish the proof of the proposition \ref{proptoto1}. 
Let $T_{0}\in \TT_{M}$. Let $T_{0,\ZZ}$ be the Zariski closure of $T_{0}$ in $\GL_{n,\ZZ}$.
 By lemma \ref {truc5.11}, we just need to prove that
${\cal D}(T_0)\cap \TT_{M}$ is contained in a finite union of $\GL_{n}(\ZZ)$-conjugacy classes.
By the  proof of the  lemma \ref{toto3.12}, there exists $C_{0}>0$ such that for all $T\in {\cal D}(T_0)\cap \TT_{M}$
and all prime numbers $p>C_{0}$ there exists $\alpha_{p}\in \GL_{n}(\ZZ_{p})$
such that $T_{\ZZ_{p}}=\alpha_{p} T_{0\ZZ_{p}}\alpha_{p}^{-1}$. 

Let $g\in \GL_{n}(\QQ)$ be such that $T:=gT_{0}g^{-1}\in {\cal D}(T_0)\cap \TT_{M}$. 
By theorem \ref{teo4.7}
$$
\vert K_{T,p}^m/K_{T,p}\vert =  \vert K_{T_{0},p}^m/T_{0}(\QQ_{p})\cap g^{-1}K_{p}g\vert 
$$
is bounded when $T$ varies in ${\cal D}(T_0)\cap \TT_{M}$.
Using the proposition $\ref{clozel}$, we see 
that for all prime numbers $p\leq C_{0}$ 
there exists a finite subset $W_{p}$ of
$$
\GL_{n}(\ZZ_{p})\backslash \GL_{n}(\QQ_{p})/Z_{\GL_{n}}(T_{0})(\QQ_{p})
$$
such that the image of $g$ in 
$\GL_{n}(\ZZ_{p})\backslash \GL_{n}(\QQ_{p})/Z_{\GL_{n}}(T_{0})(\QQ_{p})$
is contained in $W_{p}$.

 We therefore just need to prove that the set of tori
 $T=gT_{0}g^{-1}\in {\cal D}(T_0)\cap \TT_{M}$ such that
 the image $g_{p}$
 in $W_{p}$ is fixed for all $p\leq C_{0}$ is contained in a finite
 union of $\GL_{n}(\ZZ)$-conjugacy classes.
 
 If this set is not empty, there exists $T_{1}\in {\cal D}(T_0)\cap \TT_{M}$
such that for all  primes $p$ and all $T$ in this set there exists
$\alpha_{p}\in \GL_{n}(\ZZ_{p})$ such that $T_{\ZZ_{p}}= \alpha_{p} T_{1\ZZ_{p}}\alpha_{p}^{-1}$.
By the results of  Gille and Moret-Bailly (\cite{GM-B} cor. 6.4)
the set of tori under consideration is contained in a finite union of $\GL_{n}(\ZZ)$-conjugacy classes.
This finishes the proof of Proposition \ref{proptoto1}.

\begin{prop}\label{final}
  The set  ${\TT_M}$ is a finite union of $\Gamma$-conjugacy
classes.
\end{prop}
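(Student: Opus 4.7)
The plan is to combine Proposition \ref{proptoto1}, which gives that $\TT_M$ lies in a finite union of $\GL_n(\ZZ)$-conjugacy classes, with a local-to-global argument to refine the conclusion to $\Gamma$-conjugacy. It suffices to show that within each $\GL_n(\ZZ)$-class $\mathcal{C}$ meeting $\TT_M$, the intersection $\mathcal{C} \cap \TT_M$ decomposes into only finitely many $\Gamma$-orbits. Fix a representative $T_0 \in \mathcal{C} \cap \TT_M$; any $T \in \mathcal{C} \cap \TT_M$ can be written $T = g T_0 g^{-1}$ with $g \in \GL_n(\ZZ)$ and $T \subset G$, and the task is to replace $g$ by some $\gamma \in \Gamma$ after multiplying on the right by an element of $N_{\GL_n}(T_0)(\QQ)$ drawn from a finite list.

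The strategy is prime-by-prime. By Proposition \ref{p3.15}, there is a finite set of primes $S_0$ outside of which $K_{T,p} = K_{T,p}^m$ for every $T \in \TT_M$; at such primes, the Zariski closure $T_{\ZZ_p}$ is a torus, hyperspecial, and hence $K_p$-conjugate to $T_{0,\ZZ_p}$ by the classical theory of tori over $\ZZ_p$ (cf. \cite{Dem1}, Exp. X). For the finitely many primes $p \in S_0$, the condition $T \in \TT_M$ forces $|K^m_{T_0,p}/(T_0(\QQ_p) \cap g_p^{-1} K_p g_p)|$ to be bounded, and Proposition \ref{clozel} then shows that the admissible $g_p$ have finite image in $K_p \backslash G(\QQ_p) / N_G(T_0)(\QQ_p)$. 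Assembling the local conjugating elements yields an adele $\gamma_{\AAA_f} \in G(\AAA_f)$ with $\gamma_{\AAA_f}^{-1} g$ confined to a finite set of double cosets of $K$ and $N_G(T_0)(\AAA_f)$.

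To pass from the adelic statement to a rational one, I would invoke strong approximation for the simply connected cover $\widetilde G$ of $G$. Since the $\QQ$-simple factors of $G$ entering the Shimura datum have non-compact real points, $\widetilde G(\QQ)$ is dense in $\widetilde G(\AAA_f)$ and one can approximate $\gamma_{\AAA_f}$ by an element $\gamma \in G(\QQ)^+$ lying in a fixed coset of $K$, hence in $\Gamma$, up to modification on the right by an element of $N_G(T_0)(\QQ)$ drawn from a finite set. This realises $T$ as $\gamma T_0 \gamma^{-1}$ for $\gamma \in \Gamma$, with only finitely many possibilities; combined with Proposition \ref{proptoto1}, this proves the proposition.

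The main obstacle is the global-to-local reconciliation: the local finiteness furnished by Proposition \ref{clozel} at primes in $S_0$, together with the hyperspecial structure elsewhere, must be combined into a class-number-type finiteness for $\Gamma$-orbits on $G(\QQ)/N_G(T_0)(\QQ)$, which requires invoking strong approximation together with the compactness of $T_0(\RR)$ to ensure that the contribution from archimedean data is also finite.
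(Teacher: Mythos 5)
Your proposal correctly identifies that the remaining work after Proposition~\ref{proptoto1} is a local-to-global refinement of $\GL_n(\ZZ)$-conjugacy into $\Gamma$-conjugacy, and it correctly enlists Proposition~\ref{p3.15} and Clozel's Proposition~\ref{clozel} to control the finitely many ``bad'' primes. Up to that point you are following essentially the same skeleton as the paper (which likewise passes through an $A$-integral conjugacy statement and bounds the local contribution at primes in a finite set via Proposition~\ref{clozel}).

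The gap is in the final descent, where you invoke strong approximation for $\widetilde G$. Strong approximation gives density of $\widetilde G(\QQ)$ in $\widetilde G(\AAA_f)$, but the group in this paper is $G$, which is adjoint, and the natural map $\widetilde G(\AAA_f) \to G(\AAA_f)$ is far from surjective: its cokernel is controlled by abelianized Galois cohomology, and $G(\QQ)$ is \emph{not} dense in $G(\AAA_f)$ (the class set $G(\QQ)\backslash G(\AAA_f)/K$ is finite but in general nontrivial). So one cannot ``approximate $\gamma_{\AAA_f}$ by an element of $G(\QQ)^+$ in a fixed coset of $K$.'' More to the point, what one actually needs is a finiteness statement for the set of $T$ locally $G(\ZZ_p)$-conjugate to $T_0$ at every $p$, up to $G(\ZZ)$-conjugacy; this is a genuine class-number-type statement (a kernel of a map of pointed sets in fppf/étale cohomology over $\Spec(\ZZ)$), and density arguments do not produce it. This is exactly what the paper imports from Gille--Moret-Bailly \cite{GM-B}, Corollary~6.4, which is invoked twice in the actual proof: once to reduce to finitely many $G(A)$-conjugacy classes of the auxiliary tori $\tilde T$, and once at the very end to pass from local $G(\ZZ_p)$-conjugacy at all $p$ to finitely many $G(\ZZ)$-classes. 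Without this (or an equivalent cohomological finiteness), the argument does not close. A secondary, smaller omission: the paper does not work with $T$ directly but with the maximal torus $\tilde T$ of $Z(Z_G(T))$, because $A$-integral conjugacy of the full tori is what the Gille--Moret-Bailly machine applies to, and one then recovers $T$ from $Z_G(T)$ via the ``type'' discussion and Lemma~\ref{lemmetoto4}; your direct normalizer argument elides this step, though that issue is fixable, whereas the strong-approximation replacement for Gille--Moret-Bailly is not.
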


This proposition finishes the proof of the theorem \ref{T-fini}:
Fix $T_{1},\dots,T_{s}$ a system of  representatives  of the
$\Gamma$-conjugacy classes in $\TT_M$. In view of the lemma
\ref{lemmetoto2}, any $Z\in \Sigma_{F}$ is a $T_{i}$-special
subvariety.

\begin{proof}

Before starting the proof the proposition, we need to define the
``$\mathrm{type}$'' of a torus. Let $\mathcal{S}$ be a finite set of finite places of
$\QQ$ and let $A$ be the ring of $\mathcal{S}$-integers. Let $\overline{A}$ be
the integral closure of $A$ inside $\oQ$. Suppose that $G_A$ is
a smooth reductive model of $G_\QQ$ over $\Spec(A)$.

We recall (\cite{Dem1}, Exp. XIV, def. 1.3) that a maximal torus $T$
of $G_{A}$ is a torus in $G_{A}$ such that for any geometric point
$\overline{s}$ of $\Spec(A)$, $T_{\overline{s}}$ is a maximal torus of $G_{A,\overline{s}}$.
For any $s\in \Spec(A)$ there exists a neighbourhood 
$U$ of $s$ such that $G_{\vert U}$ has a maximal torus (\cite{Dem1}, exp. XIV, cor. 3.20). 
By enlarging $\mathcal{S}$ we may and do assume that $G_{A}$ has a maximal torus.

Let $T_A$ be  a torus
in $G_A$. Then $Z_{G_A}(T_A)$ is a connected reductive
subgroup of $G_A$ such that, for any geometric point $\overline{s}$ of $\Spec(A)$
$(Z_{G_{A}}(T_{A}))_{\overline{s}}$ is a reductive subgroup of $(G_{A})_{\overline{s}}$
of maximal reductive rank (\cite{Dem}, Exp. XXII, prop. 5.10.3). 
Moreover $Z_{G_A}(T_A)$  contains a maximal torus $T_A^{max}$ of $G_{A}$
(\cite{Dem1}, Exp. XII, prop. 7.9 (d)). 
 By \cite{Dem} (Exp. XXII prop 2.2)
$T_{\overline{A}}^{max}$ is a split maximal torus of $G_{\overline{A}}$.
  One can describe
   $Z_{G_{\overline{A}}}(T_{\overline{A}})$
using roots of $(G_{\overline{A}},T_{\overline{A}}^{max})$ which are
trivial on
$\tilde{T_{\overline{A}}}=Z(Z_{G_{\overline{A}}}(T_{\overline{A}}))^0$
(\cite{Dem}, Exp. XXII, sec. 5.4). Note that    $Z_{G_{\overline{A}}}(T_{\overline{A}})$
is of type $(R)$ in the sense of (\cite{Dem} Exp. XXII, def. 5.2.1). Then 
   $Z_{G_{\overline{A}}}(T_{\overline{A}})$ is determined by a subset $R'$ of the
   set $R=R(G_{\overline{A}}, T_{\overline{A}}^{max})$ of roots of
    $(G_{\overline{A}}, T_{\overline{A}}^{max})$ (\cite{Dem}, Exp. XXII, sec. 5.4).
    The possible subsets $R'$ of $R$ occuring as the roots of a subgroup of
    $G_{\overline{A}}$ of the form $Z_{G_{\overline{A}}}(T_{\overline{A}})$
    are described in (\cite{Dem}, Exp. XXII, sec. 5.10). See prop 5.10.3,  cor. 5.10.5
    and prop. 5.10.6 of loc. cit.

For any root data $R_{1}=R(G_{\overline{A}}, T_{1}^{max})$
and $R_{2}=R(G_{\overline{A}}, T_{2}^{max})$ there exists an inner automorphism
$\phi$ of $G_{\overline{A}}$ transforming $R_{1}$ into $R_{2}$
(\cite{Dem}, Exp. XXIV, lem. 1.5). The subsets of $R_{1}$ occuring as root data 
for the reductive subgroups of type $(R)$ are sent by $\phi$ on the corresponding
subsets of $R_{2}$.
Hence, there exist at most finitely many
$G({\overline{A}})$-conjugacy classes of subgroups of this form. If $T_A$ is an
$A$-torus in $G_A$ the $\mathrm{type}$ of $T_A$ is
the  $G({\overline{A}})$-conjugacy class of
$Z_{G_{\overline{A}}}(T_{{\overline{A}}})$
(compare with (\cite{Dem}, exp. XXII sec. 2)) .

\medskip

  We  only need to prove  the proposition \ref{final} for
  a subset $\TT_M'$ of $\TT_M$  such that  the tori in $\TT_M'$
    belong to a fixed $\GL_{n}(\ZZ)$-conjugacy
class of a torus $T_{0}\in \TT'_M$. 

Assume that  $\mathcal{S}$ contains
 the primes $p$ such that
  either $T_{0\ZZ_p}$ is not a torus or the Zariski-closure of
$G$ in $\GL_{n,\ZZ_p}$ is not reductive and smooth. 
The Zariski closures $G_{A}$ of $G$ and $T_{0,A}$ of $T_{0}$ in
$\GL_{n,A}$ are smooth. By enlarging $\mathcal{S}$ 
we may and do assume that $G_{A}$ has a maximal torus.
 As we work in a fixed
$\GL_n(\ZZ)$-conjugacy class all the tori in $\TT'_M$ have a
smooth Zariski closure in $\GL_{nA}$. We therefore may assume that
all the tori in $\TT_M'$ have the same $\mathrm{type}$.
  Let
$\tilde{T_{0}}$ be the maximal torus of  $Z(Z_{G}(T_{0 }))$, then
$Z_{G}(T_{0})=Z_{G}(\tilde{T_{0}})$ also has a smooth
Zariski-closure in $\GL_{nA}$.

If $T\in \TT'_F$, we write   $\tilde{T}$ for the maximal torus of $Z(Z_{G}(T))$. Then
$\tilde{T}_{A}$ and $\tilde{T}_{0,A}$ are some $A$-subtori of
$G_{A}$ locally conjugate in the fppf topology. The corollary   6.4
of the paper by  Gille and Moret-Bailly \cite{GM-B} tells us that there are at
most finitely
many $G(A)$-conjugacy classes of such subtori. We may therefore assume
that for any
$T\in \TT'_F$ the associated $A$-torus $\tilde{T}_{A}$ is conjugate to
$\tilde{T}_{0,A}$ by an element of $G(A)$.

Let $\alpha\in G(A)$ such that $\tilde{T}_{A}=\alpha
\tilde{T}_{0,A}\alpha^{-1}$.
Then
$$
Z_{G_A}(\tilde{T}_A)=Z_{G_A}(T_A)=\alpha Z_{G_A}(\tilde{T}_{0,A})\alpha^{-1}.
$$
Over $\QQ$ we get $Z_G(T)=\alpha  Z_G(T_0)\alpha^{-1}$.
Let $L$ and $L_0$ be the reductive subgroups of $Z_G(T)$ and
$Z_G(T_0)$ obtained
by removing the $\RR$-compact $\QQ$-factors of $Z_G(T)$ and $Z_G(T_0)$
as described before the lemma \ref{truc5.9}. Let $(L,X_L)$ and $(L_0,X_{L_0})$
be the associated Shimura data  (see \ref{truc5.9}). Using lemma
\ref{lemmetoto4}
we may assume that for any $T\in \TT_M'$, $\alpha$ induces an isomorphism of
Shimura data  between  $(L_0,X_{L_0})$ and $(L,X_L)$. Therefore the generic
Mumford-Tate group $\MT(X_L)$ of $X_L$ equals $\alpha MT(X_{L_0})\alpha^{-1}$.
As a consequence we have
$$
T=Z(\MT(X_L))=\alpha T_0\alpha^{-1}.
$$

  The proposition \ref{clozel} of Clozel shows that for all primes
$p\in \mathcal{S}$
  the image $\alpha_p$
  of $\alpha$ in $G(\QQ_p)/Z_G(T_0)(\QQ_p)$ is contained in a finite union
  of $G(\ZZ_p)$-orbits. We may therefore assume that for all $p\in \mathcal{S}$
  any   torus $T$ in $\TT_M'$
  is conjugate to $T_0$ by an element of $G(\ZZ_p)$.
  As $T$ and $T_0$ are also conjugate by an element of $G(\ZZ_p)$
  for all $p\notin \mathcal{S}$ the corollary 6.4 of the paper by Gille and
Moret-Bailly \cite{GM-B}
  tells us that $T$ is contained in a finite union of $G(\ZZ)$-orbits.
  As $\Gamma$ is of finite index in $G(\ZZ)$, $T$ is contained in
  a finite union of $\Gamma$-orbits.
\end{proof}


\begin{thebibliography}{99}
\bibitem{Andre} Y. Andr\'e {\it Finitude des couples d'invariants
modulaires singuliers sur une courbe alg\'ebrique plane non
modulaire.} J. reine angew. Math. {\bf 505} 1998, 203-208.

\bibitem{Bo} A. Borel {\it Introduction aux groupes arithm\'etiques}
Hermann, Paris 1969.

\bibitem{CU1} L. Clozel, E Ullmo {\it
Equidistribution de sous-vari\'et\'es sp\'eciales.}
  Annals of Maths {\bf 161} (2005), 1571--1588.

\bibitem{De1} P. Deligne
{\it Travaux de Shimura}, S\'eminaire Bourbaki, Expos\'e 389,
Fevrier 1971, Lecture Notes in Maths. {\bf 244}, Springer-Verlag,
Berlin 1971, p. 123-165.

\bibitem{De2} P. Deligne
{\it Vari\'et\'es de Shimura: interpr\'etation modulaire et
techniques de construction de mod\`eles canoniques},  dans {\it
Automorphic Forms, Representations, and $L$-functions} part. {\bf
2}; Editeurs: A. Borel et W Casselman; Proc. of Symp. in Pure
Math.  {\bf 33}, American Mathematical Society, (1979),  p.
247--290.

\bibitem{Dem1} M. Demazure, A. Grothendieck {\it Sch\'emas en groupes}
SGA 3, Exp. VIII--XIV, Fasc. 4, IHES (1963-1964).

\bibitem{Dem}M. Demazure, A. Grothendieck {\it Sch\'emas en groupes}
SGA 3 tome III. Lecture Notes in Maths. {\bf 153}, Springer (1970).

\bibitem{Ed} B. Edixhoven {\it Special points
on products of modular curves} Duke Math. J. {\bf  126} (2005),
no. 2, p. 325--348.


\bibitem{EdYa} B. Edixhoven, A. Yafaev {\it Subvarieties of Shimura
varieties}.  Ann. Math. (2) {\bf 157}, (2003), p. 621--645.

\bibitem{Fri} S. Friedland {\it The maximal orders of finite subgroups in $\GL_n(\QQ)$.} Proceedings of the AMS, Vol. 125, Number 12, Dec. 1997, 3519-3526.

\bibitem{Fu} W. Fulton {\it Intersection theory}
Springer Verlag, Second Edition (1998), Berlin. 

\bibitem{GM-B} P. Gille, L. Moret-Bailly {\it Actions alg\'ebriques de groupes
arithm\'etiques} dans 
"Torsors, �tale homotopy and application to rational points", actes de la conf�rence d'Edimbourg (2011), �dit� par Alexei Skorobogatov, LMS Lecture Note 405 (2013), 231-249. 


\bibitem{KY} B. Klingler, A. Yafaev {\it The Andr\'e--Oort conjecture}.
preprint 

\bibitem{Lo} R. Long {\it Algebraic number theory}. Monographs and Textbooks in Pure and Applied Mathematics, Vol. 41. Marcel Dekker, Inc., New York-Basel, 1977

\bibitem{Mo} B. Moonen
          {\it  Linearity properties of Shimura varieties I.} ,
Journal of Algebraic Geometry {\bf 7} (1998), p. 539-567.

\bibitem{MoMo} B. Moonen  {\it Models of Shimura varieties in mixed characteristics.}   Galois representations in arithmetic algebraic geometry (Durham, 1996), 267-350, LMS Lecture Note Ser. 254, Cambridge Univ. Press, 1998.

\bibitem{MilneSV} J.Milne {\it Introduction to Shimura varieties.}
Clay Math. Proceedings {\bf 4} (2005) p. 265--378.


\bibitem{Mu} D. Mumford  {\it On the Kodaira dimension of the Siegel
modular variety.} Lecture Notes in Math. {\bf 997}, Springer,
Berlin, (1983), p. 348-375.

\bibitem{Pi} R. Pink 
{\it Arithmetical compactification of mixed Shimura varieties}
Dissertation (1989), Bonner Mathematische Schriften 209.
available at author's homepage http://www.math.ethz.ch/~pink/publications.html


\bibitem{PlaRa} V. Platonov, A. Rapinchuk {\it Algebraic groups and
number theory.} Pure and Applied Mathematics, {\bf 139}. Academic Press, Inc., Ma, 
(1994).

\bibitem{Ti} J. Tits {\it Reductive groups over local fields} in
Automorphic forms, representations and $L$-functions (Proc. Sympos. Pure Math., Oregon State Univ., Corvallis, Ore., 1977), Part 1,  pp. 29--69, Proc. Sympos. Pure Math., XXXIII, Amer. Math. Soc., Providence, R.I.,  (1979).

\bibitem{Ul} E. Ullmo {\it Points rationnels des sous-vari\'et\'es de
Shimura.}  IMRN {\bf 76} (2004), p. 4109--4125.


\bibitem{Ul2} E. Ullmo {\it
Equidistribution de sous-vari\'et\'es sp\'eciales II.}  J. reine angew. Math.
{\bf 606} (2007), p.193--216.

\bibitem{UYMod} E. Ullmo, A. Yafaev {\it The Andr\'e-Oort conjecture for products of modular curves.}
  Arithmetic geometry,  431--439, Clay Math. Proc., {\bf 8}, Amer. Math. Soc., Providence, RI, (2009).

\bibitem{Vo}V. E. Voskresenskii {\it Algebraic Groups and Their Birational Invariants.}
Translations of Mathematical Monographs {\bf 179}, AMS (1998).

\bibitem{vdG} G. van de Geer {\it Hilbert modular surfaces.}
Ergebnisse der Mathematik und ihrer Grenzgebiete 3. Folge. Band
{\bf 16}. Springer (1988).

\bibitem{Ya} A. Yafaev {\it A conjecture of Yves Andr\'e.}
Duke Math. J. {\bf 132} (2006), no 3, p. 393--407.

\bibitem{Yafaev} A. Yafaev {\it Special points on products of two
Shimura curves.} Manuscripta Mathematica, {\bf 104} (2001)
p. 163-171.

\bibitem{YaMo}  A. Yafaev {\it On a result of Moonen on the moduli
space of principally polarized abelian varieties.}
Compositio Math.  {\bf 141 } (2005) no 5, 1103--1108.


\end{thebibliography}
\end{document}